\setlist[enumerate,1]{label={\upshape(\arabic*)}}
\setlist[enumerate,2]{label={\upshape(\alph*)}}
\numberwithin{table}{section}
\tikzset{blackv/.style={circle,fill=black,inner sep=3pt,outer sep=3pt},
         whitev/.style={circle,fill=white,draw=black,inner sep=3pt,outer sep=3pt},
         blabel/.style={circle,draw=black,inner sep=1.5pt,outer sep=0pt},
         redv/.style={circle,fill=red,inner sep=3pt,outer sep=3pt},
         block/.style={draw,rectangle split,rectangle split horizontal,rectangle split parts=#1},
         symbol/.style={
           draw=none,
           every to/.append style={
             edge node={node [sloped, allow upside down, auto=false]{$#1$}}}}
}
\newcolumntype{C}{>{$}c<{$}}
\newcolumntype{x}[1]{>{\centering\arraybackslash\hspace{0pt}}m{#1}}
\newtheorem{theorem}{Theorem}[section]
\newtheorem{theoremi}{Theorem}
\newtheorem{propositioni}[theoremi]{Proposition}
\newtheorem{corollary}[theorem]{Corollary}
\newtheorem{lemma}[theorem]{Lemma}
\newtheorem*{lemma*}{Lemma}
\newtheorem*{theorem*}{Theorem}
\newtheorem{proposition}[theorem]{Proposition}
\newtheorem{definition-proposition}[theorem]{Definition-Proposition}
\newtheorem{question}[theorem]{Question}
\theoremstyle{definition}
\newtheorem{definition}[theorem]{Definition}
\newtheorem{remark}[theorem]{Remark}
\newtheorem{example}[theorem]{Example}
\newtheorem*{ack}{Acknowledgments}
\newcommand{\CC}{\mathcal{C}}
\newcommand{\KK}{\mathcal{K}}
\newcommand{\FF}{\mathcal{F}}
\newcommand{\FFF}{\mathsf{F}}
\newcommand{\HH}{\mathcal{H}}
\renewcommand{\P}{\mathbf{P}}
\newcommand{\I}{\mathbf{I}}
\newcommand{\TT}{\mathcal{T}}
\newcommand{\TTT}{\mathsf{T}}
\newcommand{\UU}{\mathcal{U}}
\newcommand{\WW}{\mathcal{W}}
\newcommand{\Ext}{\operatorname{Ext}\nolimits}
\newcommand{\Hom}{\operatorname{Hom}\nolimits}
\newcommand{\End}{\operatorname{End}\nolimits}
\newcommand{\op}{\operatorname{op}\nolimits}
\newcommand{\RHom}{\mathbf{R}\strut\kern-.2em\operatorname{Hom}\nolimits}
\newcommand{\Image}{\operatorname{Im}\nolimits}
\newcommand{\Kernel}{\operatorname{Ker}\nolimits}
\newcommand{\Cokernel}{\operatorname{Coker}\nolimits}
\newcommand{\coker}{\Cokernel}
\newcommand{\im}{\Image}
\renewcommand{\ker}{\Kernel}
\newcommand{\ov}{\overline}
\newcommand{\ot}{\leftarrow}
\DeclareMathOperator{\moduleCategory}{\mathsf{mod}} \renewcommand{\mod}{\moduleCategory}
\DeclareMathOperator{\proj}{\mathsf{proj}}
\DeclareMathOperator{\tilt}{\mathsf{tilt}}
\DeclareMathOperator{\ccok}{\mathsf{cok}}
\DeclareMathOperator{\Sub}{\mathsf{Sub}}
\DeclareMathOperator{\Fac}{\mathsf{Fac}}
\DeclareMathOperator{\add}{\mathsf{add}}
\DeclareMathOperator{\pd}{\mathsf{pd}}
\newcommand{\iso}{\cong}
\newcommand{\defl}{\twoheadrightarrow}
\newcommand{\equi}{\simeq}
\newcommand{\sst}[1]{\substack{#1}}
\numberwithin{equation}{section}
\begin{document}
\title[IE-closed subcategories of module categories of hereditary algebras]{Image-extension-closed subcategories of module categories of hereditary algebras}
\author[H. Enomoto]{Haruhisa Enomoto}
\address{H. Enomoto: Graduate School of Science, Osaka Metropolitan University, 1-1 Gakuen-cho, Naka-ku, Sakai, Osaka 599-8531, Japan}
\email{henomoto@omu.ac.jp}

\author[A. Sakai]{Arashi Sakai}
\address{A. Sakai: Graduate School of Mathematics, Nagoya University, Chikusa-ku, Nagoya, 464-8602, Japan}
\email{m20019b@math.nagoya-u.ac.jp}

\subjclass[2020]{16G10, 18E10, 18E40}
\keywords{IE-closed subcategory, torsion pairs, twin rigid modules}
\begin{abstract}
We study IE-closed subcategories of a module category, subcategories which are closed under taking Images and Extensions. We investigate the relation between IE-closed subcategories and torsion pairs, and characterize $\tau$-tilting finite algebras using IE-closed subcategories. For the hereditary case, we show that IE-closed subcategories can be classified by twin rigid modules, pairs of rigid modules satisfying some homological conditions. Moreover, we introduce mutation of twin rigid modules analogously to tilting modules, which gives a way to calculate all twin rigid modules for the representation-finite case.
\end{abstract}

\maketitle
\tableofcontents

\section{Introduction}

Let $\Lambda$ be an artin $R$-algebra over a commutative artinian ring $R$. The aim of this paper is to study \emph{IE-closed subcategories} of $\mod\Lambda$, which are subcategories closed under taking Images and Extensions (Definition \ref{def:basic-def}).
Historically, subcategories \emph{closed under images} were studied by Auslander--Smal{\o} \cite{AS} in the context of functorially finite subcategories, while subcategories \emph{closed under extensions} are quite common in the representation theory of algebras since they can be regarded as exact categories.
Then it is natural to consider the class of \emph{IE-closed} subcategories, which is a large class including all torsion classes, torsion-free classes, wide subcategories, and ICE-closed subcategories introduced in \cite{Eno}.
It turns out that a subcategory of $\mod\Lambda$ is IE-closed if and only if it is an intersection of some torsion class and torsion-free class (Proposition \ref{prop:IE-intersection}), and subcategories of this form naturally appear in various contexts, e.g. the hearts of intervals of torsion pairs \cite{DIRRT, tattar, ES, Eno2} and Leclerc's categorification of the conjectural cluster structure of an open Richardson variety \cite{leclerc}.

First, we study the relation between IE-closed subcategories and torsion pairs in detail, and then show the following result:
\begin{propositioni}[= Proposition \ref{prop:tau-tilt-fin}]
  Let $\Lambda$ be an artin algebra. Then the following conditions are equivalent.
  \begin{enumerate}
    \item $\Lambda$ is $\tau$-tilting finite, that is, there are only finitely many torsion classes in $\mod\Lambda$.
    \item There are only finitely many IE-closed subcategories of $\mod\Lambda$.
    \item Every IE-closed subcategory of $\mod\Lambda$ is functorially finite.
  \end{enumerate}
\end{propositioni}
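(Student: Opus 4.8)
The plan is to prove the implications $(2)\Rightarrow(1)$, $(3)\Rightarrow(1)$, and $(1)\Rightarrow(2)\wedge(3)$, using Proposition \ref{prop:IE-intersection} together with the standard fact that torsion classes and torsion-free classes are themselves IE-closed. The two implications landing in $(1)$ are immediate. Since every torsion class is IE-closed, assuming $(2)$ forces there to be only finitely many torsion classes, which is exactly $(1)$; and assuming $(3)$ forces every torsion class to be functorially finite, whence $\Lambda$ is $\tau$-tilting finite by the Demonet--Iyama--Jasso characterization of $\tau$-tilting finiteness via functorial finiteness of torsion classes. For $(1)\Rightarrow(2)$ I would invoke Proposition \ref{prop:IE-intersection} to write each IE-closed subcategory as $\mathcal{T}\cap\mathcal{F}$ with $\mathcal{T}$ a torsion class and $\mathcal{F}$ a torsion-free class; as torsion classes and torsion-free classes are interchanged by the bijection $\mathcal{T}\mapsto\mathcal{T}^\perp$, $\tau$-tilting finiteness bounds the number of each, hence the number of intersections, by a finite quantity.

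The substance lies in $(1)\Rightarrow(3)$. First I would record that, for a $\tau$-tilting finite algebra, every torsion class and every torsion-free class is functorially finite, so that each is simultaneously covariantly and contravariantly finite. The core is then the following lemma, which I would prove by an explicit two-sided approximation argument: if $\mathcal{T}$ is a covariantly finite torsion class and $\mathcal{F}$ is a contravariantly finite torsion-free class, then $\mathcal{C}=\mathcal{T}\cap\mathcal{F}$ is functorially finite. For a right $\mathcal{C}$-approximation of a module $M$, take a right $\mathcal{F}$-approximation $g\colon F\to M$ (available since $\mathcal{F}$ is contravariantly finite) and restrict it along the torsion submodule $t_{\mathcal{T}}F\hookrightarrow F$. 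Because $\mathcal{F}$ is closed under submodules, $t_{\mathcal{T}}F\in\mathcal{T}\cap\mathcal{F}=\mathcal{C}$, and the composite $t_{\mathcal{T}}F\hookrightarrow F\xrightarrow{g}M$ is a right $\mathcal{C}$-approximation: given $\phi\colon C\to M$ with $C\in\mathcal{C}$, it factors through $g$ as $C\in\mathcal{F}$, and the image of $C$ in $F$ is a quotient of $C$, hence lies in $\mathcal{T}$ and therefore inside $t_{\mathcal{T}}F$, producing the required factorization. The left $\mathcal{C}$-approximation is dual: take a left $\mathcal{T}$-approximation $M\to T$ and postcompose with the $\mathcal{F}$-reflection $T\twoheadrightarrow T/s_{\mathcal{F}}T$, where $T/s_{\mathcal{F}}T\in\mathcal{C}$ since $\mathcal{T}$ is closed under quotients. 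Applying the lemma to every intersection $\mathcal{T}\cap\mathcal{F}$ yields $(3)$.

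I expect the only genuine obstacle to be this lemma. A naive strategy — intersecting approximations for $\mathcal{T}$ and $\mathcal{F}$, or taking the ``largest submodule of $M$ lying in $\mathcal{C}$'' — fails, because torsion-free classes are not closed under quotients and an IE-closed subcategory need be neither reflective nor coreflective. The resolution is precisely the asymmetry above: one must use the contravariant finiteness of $\mathcal{F}$ (not of $\mathcal{T}$) to build right approximations of $\mathcal{C}$, and the covariant finiteness of $\mathcal{T}$ to build left ones, and then check that each composite genuinely satisfies the universal factorization property. Verifying these two factorizations is the one point requiring care; everything else is bookkeeping with Proposition \ref{prop:IE-intersection} and the known characterizations of $\tau$-tilting finiteness.
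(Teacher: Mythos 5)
Your proof is correct, and for the one substantive implication it takes a genuinely different route from the paper. The equivalences with (1) and the counting argument for (1) $\Rightarrow$ (2) coincide with the paper's proof, but for (1) $\Rightarrow$ (3) the paper proceeds through the notion of left finiteness: it observes that every IE-closed subcategory is left finite when all torsion classes are functorially finite, and then invokes Lemma \ref{lem:cover_general}, whose proof passes through $\Ext$-progenerators --- a functorially finite torsion class $\TT$ has an $\Ext$-progenerator, this transfers to $\TT\cap\FF$ by a result of Kalck \cite{Kal}, and covariant finiteness then follows from the Auslander--Smal{\o} theory of finite covers \cite{AS} via Lemma \ref{lem:ie-cov}; the contravariant side is dual. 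You instead prove directly that the intersection of a covariantly finite torsion class $\TT$ and a contravariantly finite torsion-free class $\FF$ is functorially finite, by composing a right $\FF$-approximation $g\colon F\to M$ with the inclusion of the torsion submodule $t_{\TT}F\hookrightarrow F$ (which lies in $\TT\cap\FF$ since $\FF$ is closed under submodules), and dually composing a left $\TT$-approximation with the projection onto the largest torsion-free quotient; your factorization checks are sound --- any $\varphi\colon C\to M$ with $C\in\CC$ factors through $g$ because $C\in\FF$, and the image of the factoring map is a quotient of $C$, hence lies in $\TT$ and therefore inside $t_{\TT}F$, with the dual argument using $\Hom(\TT',\FF)=0$ for the associated torsion pair. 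What your approach buys is self-containedness and slightly greater generality: it needs no $\Ext$-progenerators, no citation of \cite{Kal}, and only one-sided finiteness hypotheses on $\TT$ and $\FF$ (covariant on $\TT$, contravariant on $\FF$), with no assumption on $\Lambda$. What the paper's route buys is reuse: Lemmas \ref{lem:ie-cov} and \ref{lem:cover_general} are needed anyway for the hereditary classification (Propositions \ref{prop:pair_eic} and \ref{prop:eic_pair}), so the proposition comes essentially for free there, and the detour through $\Ext$-progenerators is what later connects functorial finiteness to twin rigid modules. One cosmetic remark: your notation $s_{\FF}T$ for the torsion submodule with respect to the torsion pair $({}^{\perp}\FF,\FF)$ should be introduced explicitly, but the underlying argument is complete.
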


Next, we aim at classifying IE-closed subcategories by using particular modules similarly to Adachi--Iyama--Reiten's classification of torsion classes \cite{AIR}.
Unfortunately, we have not established such a classification in general. Instead, we focus on hereditary artin algebras, and give a classification in terms of \emph{twin rigid modules}:

\begin{definition}\label{def:twin-rigid}
  Let $\Lambda$ be an artin algebra and $P, I \in \mod\Lambda$. We say that a pair $(P,I)$ is a \emph{twin rigid module} if the following conditions are satisfied.
  \begin{enumerate}
      \item $P$ and $I$ are rigid, that is, $\Ext_{\Lambda}^{1}(P,P)=0$ and $\Ext_{\Lambda}^{1}(I,I)=0$ hold.
      \item There are two short exact sequences
    \begin{equation}\label{eq:twin-rigid-1}
      \begin{tikzcd}
        0 \rar & P \rar & I^{0} \rar & I^{1} \rar & 0,
      \end{tikzcd}
    \end{equation}
    \begin{equation}\label{eq:twin-rigid-2}
      \begin{tikzcd}
        0 \rar & P_{1} \rar & P_{0} \rar & I \rar & 0,
      \end{tikzcd}
    \end{equation}
  with $I^{0}, I^{1}\in\add I$ and $P_{0}, P_{1}\in\add P$.
  \end{enumerate}
\end{definition}
This notion generalizes tilting modules, because $T$ is a tilting module if and only if $(\Lambda, T)$ is a twin rigid module.
Now the following is the main result of this paper.
\begin{theoremi}[= Theorem \ref{thm:main}]\label{thm:a}
  Let $\Lambda$ be a hereditary artin algebra. Then there are mutually inverse bijections between the following two sets.
  \begin{enumerate}
    \item The set of functorially finite IE-closed subcategories $\CC$ of $\mod\Lambda$.
    \item The set of isomorphism classes of basic twin rigid $\Lambda$-modules $(P,I)$.
  \end{enumerate}
  The maps are given as follows:
  \begin{itemize}
    \item $\CC \mapsto (\P(\CC), \I(\CC))$, where $\P(\CC)$ (resp. $\I(\CC)$) is a direct sum of all indecomposable $\Ext$-projective (resp. $\Ext$-injective) objects in $\CC$ up to isomorphism.
    \item $(P,I) \mapsto \Fac P \cap \Sub I$.
  \end{itemize}
\end{theoremi}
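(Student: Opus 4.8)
The plan is to check that the two assignments are well defined and mutually inverse, exploiting the $\Fac/\Sub$ symmetry so that each claim about $P$ has a dual statement about $I$. Throughout I use Proposition \ref{prop:IE-intersection} to pass freely between ``IE-closed'' and ``intersection of a torsion class with a torsion-free class'', together with the fact that over a hereditary algebra $\Ext_\Lambda^1(X,-)$ is right exact for every $X$.

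\emph{The map $(P,I)\mapsto\Fac P\cap\Sub I$ is well defined.} Since $P$ is rigid and $\Lambda$ is hereditary, $\Ext_\Lambda^1(P,\Fac P)=0$, so $\Fac P$ is a functorially finite torsion class; dually $\Sub I$ is a functorially finite torsion-free class, and hence $\CC:=\Fac P\cap\Sub I$ is IE-closed. The sequences \eqref{eq:twin-rigid-1} and \eqref{eq:twin-rigid-2} give an embedding $P\hookrightarrow I^0\in\add I$ and a surjection $P_0\twoheadrightarrow I$ with $P_0\in\add P$, so that $P\in\Sub I$ and $I\in\Fac P$; thus $P,I\in\CC$. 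The first genuinely technical point is the functorial finiteness of $\CC$: as intersections of functorially finite subcategories need not be functorially finite, I would build right and left $\add(P\oplus I)$-approximations inside $\CC$ out of the two resolutions, using that $\Sub I$ is closed under submodules and the right exactness of $\Ext_\Lambda^1(P,-)$ to keep the relevant kernels and cokernels inside $\CC$.

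\emph{Identifying the Ext-projectives and Ext-injectives ($\Phi\circ\Psi=\mathrm{id}$).} Rigidity and heredity give $\Ext_\Lambda^1(P,\Fac P)=0$ and $\Ext_\Lambda^1(\Sub I,I)=0$, so every summand of $P$ is Ext-projective and every summand of $I$ is Ext-injective in $\CC$. The content is the reverse containment: if $X\in\CC$ is indecomposable Ext-projective, I take a surjection $P'\twoheadrightarrow X$ with $P'\in\add P$ (possible since $X\in\Fac P$) and consider $0\to Y\to P'\to X\to 0$; then $Y\in\Sub I$ because it is a submodule of $P'\in\Sub I$, and the two defining sequences together with the right exactness of $\Ext_\Lambda^1(P,-)$ are used to force $Y\in\Fac P$, hence $Y\in\CC$. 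Ext-projectivity of $X$ then splits the sequence, giving $X\in\add P$. The dual argument shows the Ext-injectives of $\CC$ are exactly $\add I$, so that $\P(\CC)=P$ and $\I(\CC)=I$.

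\emph{The map $\CC\mapsto(\P(\CC),\I(\CC))$ is well defined and $\Psi\circ\Phi=\mathrm{id}$.} Ext-projective and Ext-injective objects are automatically rigid. Because $\CC$ is functorially finite and extension-closed, it has enough Ext-projectives and Ext-injectives as an exact category, so every object of $\CC$ is a quotient of $\add\P(\CC)$ and a submodule of $\add\I(\CC)$; this simultaneously yields the inclusion $\CC\subseteq\Fac\P(\CC)\cap\Sub\I(\CC)$ (the reverse inclusion being the previous paragraph), hence $\Psi\circ\Phi=\mathrm{id}$, and the starting data for the twin-rigid sequences. To produce \eqref{eq:twin-rigid-1} I take a minimal left $\add\I(\CC)$-approximation $P\to I^0$ of $P=\P(\CC)$, which is monic since $P\in\Sub\I(\CC)$; computing $\Ext_\Lambda^1(I^1,I)$ for the cokernel $I^1$ from the long exact sequence shows $I^1$ is Ext-injective, and a separate check that $I^1\in\CC$ then places it in $\add\I(\CC)$. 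Sequence \eqref{eq:twin-rigid-2} is obtained dually, and the uniqueness of $\P(\CC),\I(\CC)$ up to additive equivalence makes $\Phi$ and $\Psi$ mutually inverse on basic representatives.

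\emph{Main obstacle.} The crux is twofold, and each part uses heredity essentially: proving functorial finiteness of the intersection $\Fac P\cap\Sub I$, and the reverse containments identifying the Ext-projectives with $\add P$ and the Ext-injectives with $\add I$. In both, the delicate step is to show that a kernel such as $Y$ above lies in $\Fac P$ (equivalently, is generated by $P$) and, dually, that the cokernel of a left $\add I$-approximation lies in $\Sub I$; this is precisely the compatibility that the two short exact sequences in the definition of a twin rigid module are engineered to supply, and I expect these membership statements to be the hardest part of the argument.
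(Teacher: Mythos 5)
Your overall architecture matches the paper's (two well-definedness propositions plus the observation that the assignments identify $\Ext$-projectives and $\Ext$-injectives), but the step you yourself flag as the crux contains a genuine gap, and as stated it is false. You propose to take a surjection $P' \defl X$ onto an indecomposable $\Ext$-projective $X \in \CC$ and to ``force'' the kernel $Y$ into $\Fac P$ using the defining sequences and right exactness of $\Ext^1_\Lambda(P,-)$. For an arbitrary surjection this fails: take $\Lambda = k(1 \leftarrow 2)$, $P = P_2 \oplus S_2$ (a tilting module) and $I = D\Lambda$, so that $\CC = \Fac P \cap \Sub D\Lambda = \Fac P = \add(P_2 \oplus S_2)$; then $X = S_2$ is $\Ext$-projective in $\CC$, but the surjection $P_2 \defl S_2$ has kernel $S_1 \notin \Fac P$, and indeed it does not split. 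If you instead choose $P' \to X$ to be a right $\add P$-approximation, the approximation property only yields $\Ext^1_\Lambda(P,Y)=0$, which does not imply $Y \in \Fac P$ when $P$ is merely rigid rather than tilting; and in the tilting case one checks that $Y \in \Fac P$ is \emph{equivalent} to the sequence splitting, so the argument is circular. The paper's proof of Proposition \ref{prop:pair_eic}\,(3) avoids this entirely by a different device: embed the $\Ext$-projective $C$ into $I^n$, lift the embedding along $P_0^n \defl I^n$ (coming from \eqref{eq:twin-rigid-2}) using $\Ext^1_\Lambda(C, P_1^n)=0$ --- legitimate because $P \in \CC$ via \eqref{eq:twin-rigid-1} --- to get a monomorphism $C \hookrightarrow P_0^n$, and then invoke Lemma \ref{lem:add}: $\Fac P \cap \Sub P = \add P$ for rigid $P$ over a hereditary algebra. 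This lemma, which your proposal never mentions, is the linchpin of both directions and is exactly the missing compatibility you were looking for.

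The same omission infects the other direction. To produce \eqref{eq:twin-rigid-1} you take a left $\add\I(\CC)$-approximation $P \to I^0$ and defer ``a separate check that $I^1 \in \CC$''; since $\CC$ is only IE-closed (not cokernel-closed), showing the cokernel lies in $\Sub I$ is the same unresolved difficulty in mirror image. The paper instead builds an injective copresentation $0 \to P \to I^0 \to C'' \to 0$ with $C'' \in \CC$ using enough $\Ext$-injectives, and then $C'' \in \Fac I \cap \Sub I = \add I$ by Lemma \ref{lem:add} applied to the rigid module $I$ (this is the ``dual argument'' in Proposition \ref{prop:eic_pair}\,(2)). Two further points you gloss over: first, ``enough $\Ext$-projectives'' does not by itself guarantee that there are only finitely many indecomposable $\Ext$-projectives, i.e.\ that $\P(\CC)$ exists as a module; this is the implication (4)\,$\Rightarrow$\,(1) of Lemma \ref{lem:ie-cov}, which the paper justifies in the hereditary case by citing \cite[Proposition 3.1\,(2)]{Eno} and explicitly leaves open in general. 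Second, your plan to hand-build approximations for the functorial finiteness of $\Fac P \cap \Sub I$ is unnecessary: since $P, I \in \CC$ and $\CC \subseteq \Fac P$, $\CC \subseteq \Sub I$, the finite cover/cocover criterion of Auslander--Smal\o\ (the equivalence (2)\,$\Leftrightarrow$\,(3) in Lemma \ref{lem:ie-cov} and its dual) gives covariant and contravariant finiteness immediately.
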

We refer the reader to Example \ref{ex:ex} for a concrete example. In addition, the following example is illustrative.
\begin{example}
  For a hereditary artin algebra $\Lambda$, we can consider the following three extreme cases. Note that tilting and cotilting modules coincide in this case.
  \begin{itemize}
    \item $(P, D\Lambda)$ is twin rigid if and only if $P$ is tilting, and the corresponding IE-closed subcategory is $\Fac P$, the usual torsion class corresponding to a tilting module $P$.
    \item $(\Lambda, I)$ is twin rigid if and only if $I$ is cotilting, and the corresponding IE-closed subcategory is $\Sub I$, the usual torsion-free class corresponding to a cotilting module $I$.
    \item $(P,P)$ is twin rigid if and only if $P$ is rigid, and the corresponding IE-closed subcategory is $\add P$.
  \end{itemize}
\end{example}
Unlike Adachi--Iyama--Reiten's classification of torsion classes \cite{AIR}, we have to use both $\Ext$-projectives and $\Ext$-injectives. For example, for a tilting module $P$, the above example yields to twin rigid modules $(P, D\Lambda)$ and $(P,P)$, and the corresponding IE-closed subcategories are $\Fac P$ and $\add P$ respectively, which are different in general.

Then, we observe fundamental properties of twin rigid modules over a hereditary algebra. An important fact is that for a fixed rigid module $P$, twin rigid modules $(P,I)$ bijectively correspond to particular tilting $\End_\Lambda(P)$-modules (Proposition~\ref{prop:tilting}). Taking advantage of this, we introduce \emph{mutation} of twin rigid modules as an analogue of tilting mutation \cite{RS} in classical tilting theory:
\begin{definition}\label{def:mutation}
  Let $(P, X \oplus M)$ be a basic twin rigid module with $X$ indecomposable. If a minimal left $\add M$-approximation $f$ of $X$ is a monomorphism, then we call $(P, \coker f \oplus M)$ a \emph{mutation} of $(P, X \oplus M)$ with respect to $X$.
\end{definition}
We show that a mutation is again a twin rigid module (Proposition \ref{prop:leftmutation}).
Moreover, when $\Lambda$ is of finite representation type, we can find all twin rigid $\Lambda$-modules $(P, I)$ as follows:
\begin{theoremi}[Theorem~\ref{thm:path}]
  Let $\Lambda$ be a hereditary artin algebra of finite representation type and $P$ a rigid $\Lambda$-module. Then any twin rigid module $(P,I)$ can be obtained by iterating mutation from a twin rigid module $(P,P)$.
\end{theoremi}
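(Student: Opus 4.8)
The plan is to transport the entire problem to tilting theory over $B := \End_\Lambda(P)$ by means of the functor $F := \Hom_\Lambda(P,-)$ and the bijection of Proposition~\ref{prop:tilting}. Under that bijection a twin rigid module $(P,I)$ corresponds to the tilting $B$-module $FI$, and in particular $(P,P)$ corresponds to $FP = B$, the free module of rank one. Ordering the poset $\tilt B$ of basic tilting $B$-modules by $T \le T'$ if and only if $\Fac T \subseteq \Fac T'$, the module $B$ is the unique maximum element, since $\Fac B = \mod B$ is the largest torsion class. Thus it suffices to prove that every tilting $B$-module occurring in the image of the bijection can be reached from $B$ by iterating left mutation, and that each such left mutation pulls back to a mutation of twin rigid modules in the sense of Definition~\ref{def:mutation}.

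First I would make the mutation correspondence precise. Given a mutation taking $(P, X\oplus M)$ to $(P, \coker f \oplus M)$, with $f\colon X \to M'$ a minimal left $\add M$-approximation that is a monomorphism, I would apply $F$ to the short exact sequence $0 \to X \to M' \to \coker f \to 0$. Using that $F$ is exact on the sequences occurring here and fully faithful on the summands of the modules in play (which is built into the equivalence underlying Proposition~\ref{prop:tilting}), one obtains a short exact sequence $0 \to FX \to FM' \to F(\coker f) \to 0$ in which $Ff$ is a minimal left $\add(FM)$-approximation; this is exactly the defining sequence of the left tilting mutation of $FI = FX \oplus FM$ at the summand $FX$. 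Conversely, a left mutation of $FI$ at $FX$ is of this form, and Proposition~\ref{prop:leftmutation} guarantees that the resulting object is again the image of a twin rigid module of the form $(P,-)$, so the two mutation operations match and remain inside the set of relevant tilting modules. In particular, the only mutation defined on the twin rigid side corresponds to a \emph{downward} step in the order on $\tilt B$.

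The remaining point is reachability, which I would settle order-theoretically. Since $\Lambda$ is of finite representation type there are only finitely many indecomposable, hence finitely many rigid, $\Lambda$-modules; consequently there are only finitely many twin rigid modules $(P,I)$, and therefore only finitely many tilting $B$-modules in the image of the bijection. In a finite poset with a unique maximum, every element is connected to that maximum by a saturated chain of covering relations, and for tilting modules a downward covering relation $T' \cov T$ is realised precisely by the left mutation of $T'$ at the unique indecomposable summand in which $T$ and $T'$ differ (this is the classical description of the tilting quiver due to Happel--Unger and Riedtmann--Schofield). Reading such a chain from $B$ downward to $FI$ therefore expresses $FI$ as an iterated left mutation of $B$; pulling it back along Proposition~\ref{prop:tilting}, and invoking Proposition~\ref{prop:leftmutation} at each step to stay within the twin rigid modules, gives $(P,I)$ as an iterated mutation of $(P,P)$, as required.

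The main obstacle is the mutation correspondence of the second paragraph: one must verify that $F$ carries a minimal left $\add M$-approximation to a minimal left $\add(FM)$-approximation and preserves both the monomorphism property and the cokernel, and --- crucially --- that the downward covering relations needed to descend from $B$ are always \emph{left} rather than right mutations, so that they fall under Definition~\ref{def:mutation}. Once the exactness and full faithfulness of $F$ on the relevant subcategory are in hand (as supplied by the equivalence underlying Proposition~\ref{prop:tilting}), and once it is checked that the image of the bijection is not disconnected from $B$ by passing to the subset of ``particular'' tilting modules, the order-theoretic descent is routine.
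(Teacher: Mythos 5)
Your overall route coincides with the paper's: transport everything via $F_P=\Hom_\Lambda(P,-)$ and Proposition~\ref{prop:tilting} to the tilting poset of $\Gamma_P=\End_\Lambda(P)$, note that $(P,P)$ corresponds to the maximum $\Gamma_P$, realize Hasse arrows by exchange sequences (Theorem~\ref{thm:hasse}), and descend to $F_PI$. But there is a genuine gap at exactly the point you defer to a final ``once it is checked'': you never prove that the vertices of a saturated chain from $\Gamma_P$ down to $F_PI$ lie in $\Sub(DP)$, and your proposed mechanism --- ``invoking Proposition~\ref{prop:leftmutation} at each step'' --- does not work in the downward direction. Proposition~\ref{prop:tilting} only bijects twin rigid modules with tilting $\Gamma_P$-modules \emph{inside} $\Sub(DP)$, so an intermediate vertex can be pulled back to the twin rigid side only if it is known to lie in $\Sub(DP)$. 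Going downward this fails structurally: if $T'\in\Sub(DP)$ and $T'\to T$ is a Hasse arrow with exchange sequence $0 \to F_PX \to \widetilde{M} \to Y \to 0$, the new complement $Y$ is a \emph{quotient} of $\widetilde{M}$, and the torsion-free class $\Sub(DP)$ is not closed under quotients; so you cannot conclude $T\in\Sub(DP)$, you cannot pull the sequence back through the exact equivalence $\ccok P \equi \Sub(DP)$ (which requires all three terms to lie in $\Sub(DP)$), and you cannot even verify the hypothesis of Proposition~\ref{prop:leftmutation} on the $\Lambda$-side: left exactness of $F_P$ gives $\Hom_\Lambda(P,\ker f)=0$ for the minimal left $\add M$-approximation $f$, not $\ker f=0$. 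The paper closes this gap by the \emph{opposite} induction (Lemma~\ref{lem:subdp}): for a Hasse arrow $T'\to T$ with $T\in\Sub(DP)$, the exchange sequence gives $T'\in\Sub T\subseteq\Sub(DP)$ --- closure under submodules is exactly what $\Sub(DP)$ does have --- so starting from the known endpoint $F_PI\in\Sub(DP)$ and walking backwards along the path, every vertex lies in $\Sub(DP)$; Lemma~\ref{lem:fullsub} then identifies the whole path with a mutation path in $\KK_P$. This backward closure property is the one missing idea in your write-up, and your framing (calling the mutation correspondence the ``main obstacle'' and the descent ``routine'') inverts where the real difficulty sits.

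A smaller, related slip: you establish finiteness of the wrong set. Finitely many twin rigid modules gives a finite image subposet, but a covering relation in that subposet need not be a covering in $\tilt\Gamma_P$, hence need not be a Hasse arrow realized by an exchange sequence; Theorem~\ref{thm:hasse} concerns the ambient poset, and Lemma~\ref{lem:fullsub} identifies $\KK_P$ with the \emph{full subquiver} of $\HH(\tilt\Gamma_P)$ on vertices in $\Sub(DP)$, not with the Hasse quiver of the subposet. What is actually needed, and what the paper invokes, is that $\Gamma_P$ itself is of finite representation type (\cite[VIII. Lemma 3.2 (c)]{ASS}), so that $\HH(\tilt\Gamma_P)$ is a finite quiver and a path $\Gamma_P\to\cdots\to F_PI$ exists there; combined with Lemma~\ref{lem:subdp} this completes the proof. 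Your remaining worry about left versus right mutations is harmless: every Hasse arrow comes with an exchange sequence from its source to its target, i.e., is a left mutation, as you correctly anticipated.
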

This gives an algorithm for obtaining all twin rigid modules, and also all IE-closed subcategories by Theorem~\ref{thm:a} (see Example \ref{ex:ex} for an example).

\medskip
\noindent
{\bf Organization.}
This paper is organized as follows.
In Section \ref{sec:ie}, we study basic properties of IE-closed subcategories, and establish a bijection between IE-closed subcategories and twin rigid modules for the hereditary case.
In Section \ref{sec:comandmut}, we focus on twin rigid modules over a hereditary algebra, and introduce mutation and completion of twin rigid modules.
In Section \ref{sec:ex}, we give an example of twin rigid modules and IE-closed subcategories over $k(1 \ot 2 \ot 3)$.
In Appendix \ref{sec:app}, we collect some facts about tilting modules and tilting mutation.

\medskip
\noindent
{\bf Conventions and notation.}

An \emph{artin $R$-algebra} $\Lambda$ is an $R$-algebra $\Lambda$ over a commutative artinian ring $R$ which is finitely generated as an $R$-module. Throughout this paper, we fix a commutative artinian ring $R$, and we simply say that $\Lambda$ is an \emph{artin algebra}.
For an artin algebra $\Lambda$, we denote by $\mod\Lambda$ (resp. $\proj\Lambda$) the category of finitely generated right (resp. finitely generated projective right) $\Lambda$-modules, and $D \colon \mod\Lambda \to \mod\Lambda^{\op}$ denotes the Matlis dual.
We assume that all subcategories are full, additive, and closed under isomorphisms. 
We refer the reader to \cite{ASS,ARS} for the basics of the representation theory of artin algebras.

For $M \in \mod\Lambda$, we will use the following notation.
\begin{itemize}
  \item $|M|$ denotes the number of non-isomorphic indecomposable direct summands of $M$.
  \item $\add M$ denotes the subcategory of $\mod\Lambda$ consisting of direct summands of finite direct sums of $M$.
  \item $\Fac M$ denotes the subcategory of $\mod\Lambda$ consisting of modules $N$ such that there is a surjection $M' \defl N$ with $M' \in \add M$.
  \item $\Sub M$ denotes the subcategory of $\mod\Lambda$ consisting of modules $L$ such that there is an injection $L \hookrightarrow M'$ with $M' \in \add M$.
\end{itemize}

\section{IE-closed subcategories}\label{sec:ie}

In this section, we discuss some finiteness conditions of subcategories and give a proof of Theorem~\ref{thm:a}. Throughout this section, $\Lambda$ denotes an artin algebra, which is not necessarily hereditary.
We begin by collecting basic definitions related to subcategories of $\mod\Lambda$.

\begin{definition}\label{def:basic-def}
  Let $\CC$ be a subcategory of $\mod\Lambda$.
  \begin{enumerate}
    \item $\CC$ is \emph{closed under extensions} if for every short exact sequence
    \[
    \begin{tikzcd}
      0 \rar & L \rar & M \rar & N \rar & 0,
    \end{tikzcd}
    \]
    in $\mod\Lambda$ with $L,N \in \CC$, we have $M \in \CC$.
    \item $\CC$ is \emph{closed under quotients (resp. submodules) in $\mod\Lambda$} if, for every object $C \in \CC$, every quotient (resp. submodule) of $C$ in $\mod\Lambda$ belongs to $\CC$.
    \item $\CC$ is a \emph{torsion class (resp. torsion-free class) in $\mod\Lambda$} if $\CC$ is closed under extensions and quotients in $\mod\Lambda$ (resp. extensions and submodules).
    \item $\CC$ is closed under \emph{images (resp. kernels, cokernels)} if, for every map $\varphi \colon C_1 \to C_2$ with $C_1, C_2 \in \CC$, we have $\im\varphi \in \CC$ (resp. $\ker\varphi\in\CC$, $\coker\varphi\in\CC$).
    \item $\CC$ is an \emph{IE-closed subcategory of $\mod\Lambda$} if $\CC$ is closed under images and extensions.
    \item $\CC$ is a \emph{wide subcategory of $\mod\Lambda$} if $\CC$ is closed under kernels, cokernels, and extensions.
  \end{enumerate}
\end{definition}

\begin{remark}
  If a subcategory $\CC$ of $\mod\Lambda$ is closed under extensions, we can regard $\CC$ as an exact category such that conflations are precisely short exact sequences in $\mod\Lambda$ whose terms are in $\CC$. In what follows, we always regard $\CC$ as an exact category in this case, and we freely use the terminology of exact categories such as exact equivalences, projective objects and injective objects.
\end{remark}

For a collection $\CC$ of $\Lambda$-modules in $\mod\Lambda$, we denote by $\TTT(\CC)$ (resp. $\FFF(\CC)$) the smallest torsion class containing $\CC$ (resp. the smallest torsion-free class containing $\CC$).

\subsection{The general theory of IE-closed subcategories}
In this subsection, we establish basic properties of IE-closed subcategories over an artin algebra.
The following proposition plays an important role in this paper. 

\begin{proposition}\label{prop:IE-intersection}
  For a subcategory $\CC$ of $\mod\Lambda$, the following are equivalent.
  \begin{enumerate}
    \item $\CC$ is an IE-closed subcategory of $\mod\Lambda$.
    \item There exist a torsion class $\TT$ and a torsion-free class $\FF$ in $\mod\Lambda$ satisfying $\CC = \TT \cap \FF$.
  \end{enumerate}
  In this case, $\CC = \TTT(\CC) \cap \FFF(\CC)$ holds.
\end{proposition}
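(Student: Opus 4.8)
The plan is to prove the two implications separately and to produce the classes $\TT,\FF$ explicitly. The direction $(2)\Rightarrow(1)$ is the easy one and I would argue it directly: if $\CC=\TT\cap\FF$, then $\CC$ is extension-closed because both $\TT$ and $\FF$ are; and for a map $\varphi\colon C_1\to C_2$ in $\CC$, the image $\im\varphi$ is a quotient of $C_1\in\TT$, hence lies in $\TT$ (torsion classes are quotient-closed), and a submodule of $C_2\in\FF$, hence lies in $\FF$ (torsion-free classes are submodule-closed); thus $\im\varphi\in\TT\cap\FF=\CC$. All the content is in $(1)\Rightarrow(2)$, where the only reasonable candidates are $\TT=\TTT(\CC)$ and $\FF=\FFF(\CC)$. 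The inclusion $\CC\subseteq\TTT(\CC)\cap\FFF(\CC)$ is immediate, so the whole problem is the reverse inclusion, which will also yield the final displayed equality. Before starting I would record two elementary facts: first, since $\CC$ is image-closed, $\CC=\Fac\CC\cap\Sub\CC$, because any $M$ with $C_1\twoheadrightarrow M\hookrightarrow C_2$ ($C_1,C_2\in\CC$) is the image of the composite $C_1\to C_2$; second, the standard descriptions $\TTT(\CC)=\Filt(\Fac\CC)$ and $\FFF(\CC)=\Filt(\Sub\CC)$, which in particular imply that every nonzero object of $\TTT(\CC)$ has a nonzero submodule in $\Fac\CC$, and dually every nonzero object of $\FFF(\CC)$ has a nonzero quotient in $\Sub\CC$.

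Granting these, I would prove $\TTT(\CC)\cap\FFF(\CC)\subseteq\CC$ by induction on $\length M$, in two symmetric steps: show $M\in\Fac\CC$ and $M\in\Sub\CC$, after which $\CC=\Fac\CC\cap\Sub\CC$ finishes. For $M\in\Fac\CC$, let $T=\operatorname{tr}_\CC M$ be the trace (the sum of images of all maps $C\to M$ with $C\in\CC$), which lies in $\Fac\CC\cap\FFF(\CC)\subseteq\TTT(\CC)\cap\FFF(\CC)$; if $T\subsetneq M$ then $T\in\CC$ by induction. If $M/T\neq 0$, then $M/T\in\TTT(\CC)$ (a quotient of $M$), so it has a nonzero submodule $V\in\Fac\CC$; pulling a surjection $C'\twoheadrightarrow V$ back along the projection $M\to M/T$ over $V$ gives an object $Y$ sitting in $0\to T\to Y\to C'\to 0$, whence $Y\in\CC$ by extension-closure, and $Y$ surjects onto the preimage $X$ of $V$. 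Thus $X\in\Fac\CC\cap\FFF(\CC)$ with $T\subsetneq X\subseteq M$; if $X\subsetneq M$ the inductive hypothesis forces $X\in\CC$, contradicting the maximality of the trace $T$, so $X=M$ and $M\in\Fac\CC$. The proof that $M\in\Sub\CC$ is dual: replace the trace by the reject $\operatorname{rej}_\CC M$, use that a nonzero object of $\FFF(\CC)$ has a nonzero quotient in $\Sub\CC$, and replace the pullback by a pushout along a map $K\to C''$, producing $0\to C''\to Z\to M/K\to 0$ with $Z\in\CC$ and a map $M\to Z$ whose kernel is strictly smaller than the reject, a contradiction unless the reject is zero.

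The main obstacle—and the reason a naive one-step length induction fails—is that $\FFF(\CC)$ is not closed under quotients and $\TTT(\CC)$ is not closed under submodules, so after peeling off a bottom layer of a $\Filt(\Fac\CC)$-filtration the remaining cokernel need no longer lie in $\FFF(\CC)$, and one cannot stay inside $\TTT(\CC)\cap\FFF(\CC)$. The pullback/pushout construction is precisely what repairs this: it upgrades an extracted $\Fac\CC$-submodule (resp. $\Sub\CC$-quotient) into a genuine subobject lying in $\CC$ that strictly enlarges the trace (resp. into a genuine quotient that strictly shrinks the reject), contradicting the extremal property defining them. Carrying out both steps gives $\TTT(\CC)\cap\FFF(\CC)\subseteq\Fac\CC\cap\Sub\CC=\CC$, and together with the trivial inclusion this proves $\CC=\TTT(\CC)\cap\FFF(\CC)$, establishing both $(1)\Rightarrow(2)$ and the closing assertion.
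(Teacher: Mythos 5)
Your proof is correct, but it takes a genuinely different route from the paper: the paper disposes of $(2)\Rightarrow(1)$ in one line exactly as you do, and then obtains $(1)\Rightarrow(2)$ together with $\CC=\TTT(\CC)\cap\FFF(\CC)$ by citing an external result (\cite[Lemma 4.23]{Eno2}), whereas you reprove that key inclusion from scratch. Your mechanism --- induction on length, with the trace $\operatorname{tr}_\CC M$ and reject $\operatorname{rej}_\CC M$ as the extremal subobjects, the descriptions $\TTT(\CC)=\Filt(\Fac\CC)$ and $\FFF(\CC)=\Filt(\Sub\CC)$ to extract a nonzero submodule in $\Fac\CC$ (resp.\ quotient in $\Sub\CC$), and the pullback/pushout to convert it into an object of $\CC$ strictly enlarging the trace (resp.\ shrinking the reject) --- is sound: I checked in particular that $T=\operatorname{tr}_\CC M$ lies in $\Fac\CC\cap\FFF(\CC)$ (image of a single map from a finite direct sum, submodule-closure of $\FFF(\CC)$), that the pullback $Y$ sits in $0\to T\to Y\to C'\to 0$ with both ends in $\CC$ and surjects onto the preimage $X$ of $V$, that the pushout map $M\to Z$ has kernel exactly $\ker(K\to C'')\subsetneq K$, and that $\CC=\Fac\CC\cap\Sub\CC$ follows from image-closure; your correctly identified obstruction (that $\FFF(\CC)$ is not quotient-closed, so one cannot naively peel filtration layers) is precisely what the pullback/pushout repairs. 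One harmless redundancy: in the trace step, the conclusion $X=M\in\Fac\CC$ in fact shows the case $M/T\neq 0$ is vacuous, since $M\in\Fac\CC$ forces $T=M$; the logic still delivers $M\in\Fac\CC$ in all cases. The trade-off is the expected one: the paper's citation keeps the exposition short and defers to established lattice-theoretic machinery, while your argument is self-contained, uses only elementary facts about length categories and the $\Filt$-description of $\TTT(\CC)$ and $\FFF(\CC)$, and makes explicit \emph{why} the intersection $\TTT(\CC)\cap\FFF(\CC)$ collapses back onto $\CC$.
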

\begin{proof}
  Since every torsion class and torsion-free class is IE-closed, (2) clearly implies (1).
  Conversely, if $\CC$ is IE-closed, then $\CC = \TTT(\CC) \cap \FFF(\CC)$ holds by \cite[Lemma 4.23]{Eno2}. Hence (1) implies (2).
\end{proof}

First, we recall functorial finiteness and $\Ext$-projectives.
\begin{definition}
Let $\CC$ be a subcategory of $\mod\Lambda$ and $M$ an object in $\mod\Lambda$. A morphism $f\colon M\to C$ in $\mod\Lambda$ is a \emph{left $\CC$-approximation} of $M$ if $C$ belongs to $\CC$ and every morphism $f'\colon M\to C'$ with $C'\in\CC$ factors through $f$. Dually, a \emph{right $\CC$-approximation} is defined.
A subcategory $\CC$ of $\mod\Lambda$ is \emph{covariantly finite (resp. contravariantly finite)} if every $M\in\mod\Lambda$ has a left (resp. right) $\CC$-approximation. A subcategory of $\mod\Lambda$ is \emph{functorially finite} if it is covariantly finite and contravariantly finite.
\end{definition}

\begin{definition}
Let $\CC$ be a subcategory of $\mod\Lambda$.
\begin{enumerate}
  \item $P\in\CC$ is \emph{$\Ext$-projective} if it satisfies $\Ext_\Lambda^1(P,C)=0$ for any $C\in\CC$.
  \item \emph{$\CC$ has enough $\Ext$-projectives} if for any $C\in\CC$, there exists a short exact sequence
  \[
  \begin{tikzcd}
    0 \rar & C' \rar & P \rar & C \rar & 0
  \end{tikzcd}
  \]
  such that $P$ is an $\Ext$-projective object in $\CC$ and $C'\in\CC$.
  \item $P$ is an \emph{$\Ext$-progenerator} of $\CC$ if $\CC$ has enough $\Ext$-projectives and $\Ext$-projective objects are precisely objects in $\add P$.
  \item If $\CC$ has an $\Ext$-progenerator, then $\P(\CC)$ denotes a unique basic $\Ext$-progenerator of $\CC$, that is, a direct sum of all indecomposable $\Ext$-projective objects in $\CC$ up to isomorphism.
\end{enumerate}
Dually, the notions for $\Ext$-injectives are defined, and $\I(\CC)$ denotes a unique basic $\Ext$-injective cogenerator of $\CC$ (if exists).
\end{definition}

If $\CC$ is closed under extensions, then $\Ext$-projectives in $\CC$ are precisely projective objects in an exact category $\CC$, and $\CC$ has enough $\Ext$-projectives if and only if $\CC$ has enough projectives as an exact category.
The above notions are related to each other as follows.

\begin{lemma}\label{lem:ie-cov}
  Let $\CC$ be an IE-closed subcategory of $\mod\Lambda$.
  Consider the following conditions.
  \begin{enumerate}
      \item $\CC$ has an $\Ext$-progenerator.
      \item $\CC$ has a finite cover, that is, there exists $M\in\CC$ satisfying $\CC\subseteq\Fac M$.
      \item $\CC$ is covariantly finite.
      \item $\CC$ has enough $\Ext$-projectives.
  \end{enumerate}
  Then the implications {\upshape (1) $\Rightarrow$ (2) $\Leftrightarrow$ (3) $\Rightarrow$ (4)} hold. Moreover, if $\Lambda$ is hereditary, then all the conditions are equivalent.
  \[
  \begin{tikzcd}[column sep = 3cm]
    \textup{(1)} \dar[Rightarrow]
    & \textup{(4)} \lar[Rightarrow, "\textup{if $\Lambda$ is hereditary}"', dashed] \\ 
    \textup{(2)} \rar[Leftrightarrow] & \textup{(3)} \uar[Rightarrow]
  \end{tikzcd}
  \]
\end{lemma}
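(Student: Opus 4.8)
The plan is to run the cycle $(1)\Rightarrow(2)$, $(2)\Leftrightarrow(3)$, $(2)\Rightarrow(4)$, and finally $(4)\Rightarrow(1)$ under the hereditary hypothesis, writing throughout $\TT:=\TTT(\CC)$ and $\FF:=\FFF(\CC)$, so that $\CC=\TT\cap\FF$ by Proposition \ref{prop:IE-intersection}. The implication $(1)\Rightarrow(2)$ is immediate: if $\P(\CC)$ is an $\Ext$-progenerator, then having enough $\Ext$-projectives forces every $C\in\CC$ to be a quotient of an object of $\add\P(\CC)$, whence $\CC\subseteq\Fac\P(\CC)$ with $\P(\CC)\in\CC$. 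For $(3)\Rightarrow(2)$ I would take a left $\CC$-approximation $f\colon\Lambda\to M$ of the regular module (which exists by covariant finiteness); then $M\in\CC$, and given $C\in\CC$ a surjection $\Lambda^{n}\defl C$ has components factoring through $f$, so it factors through $f^{\oplus n}\colon\Lambda^{n}\to M^{n}$, producing a surjection $M^{n}\defl C$. Hence $\CC\subseteq\Fac M$.

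For $(2)\Rightarrow(3)$ I would exploit the two torsion-pair structures. From $\CC\subseteq\Fac M$ with $M\in\CC$ one gets $\TT=\TTT(M)$, and the torsion class generated by a single module is functorially finite (see, e.g., \cite{AS}); in particular $\TT$ is covariantly finite. Let $(\UU,\FF)$ be the torsion pair whose torsion-free class is $\FF$, so that each $N$ has a canonical sequence $0\to uN\to N\to\bar N\to 0$ with $uN\in\UU$ and $\bar N\in\FF$, where $N\to\bar N$ is a left $\FF$-approximation. Given $N$, choose a left $\TT$-approximation $a\colon N\to T$ and form $T\to\bar T$; since $\bar T$ is a quotient of $T\in\TT$ it lies in $\TT$, so $\bar T\in\TT\cap\FF=\CC$. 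Any $g\colon N\to C$ with $C\in\CC\subseteq\TT$ factors through $a$, and the resulting map $T\to C$ then factors through $T\to\bar T$ because $C\in\FF$; thus $N\to\bar T$ is a left $\CC$-approximation and $\CC$ is covariantly finite.

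The real work is $(2)\Rightarrow(4)$. As $\TT$ is a functorially finite torsion class it has an $\Ext$-progenerator $P_{\TT}$ and enough $\Ext$-projectives \cite{AS,AIR}. I first claim $\overline{P_{\TT}}\in\CC$ is $\Ext$-projective in $\CC$: applying $\Hom_{\Lambda}(-,C)$ for $C\in\CC$ to $0\to uP_{\TT}\to P_{\TT}\to\overline{P_{\TT}}\to 0$ yields the exact piece $\Hom_{\Lambda}(uP_{\TT},C)\to\Ext^{1}_{\Lambda}(\overline{P_{\TT}},C)\to\Ext^{1}_{\Lambda}(P_{\TT},C)$, whose outer terms vanish since $uP_{\TT}\in\UU$, $C\in\FF$ give $\Hom_{\Lambda}(uP_{\TT},C)=0$, and $\Ext^{1}_{\Lambda}(P_{\TT},C)=0$ as $P_{\TT}$ is $\Ext$-projective in $\TT$ with $C\in\CC\subseteq\TT$. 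For enough $\Ext$-projectives, given $C\in\CC$ take an $\Ext$-projective presentation $0\to T'\to P\to C\to 0$ in $\TT$ with $P\in\add P_{\TT}$ and $T'\in\TT$, and apply the right-exact reflection $(-)\mapsto\overline{(-)}$ into $\FF$. Using $\bar C=C$ this gives an exact sequence $\overline{T'}\to\bar P\to C\to 0$, so $\bar P\defl C$ with kernel $K=\im(\overline{T'}\to\bar P)$; then $K$ is a quotient of $\overline{T'}\in\TT$, hence $K\in\TT$, and a submodule of $\bar P\in\FF$, hence $K\in\FF$, so $K\in\CC$, while $\bar P\in\add\overline{P_{\TT}}$ is $\Ext$-projective in $\CC$. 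I expect this to be the main obstacle: the delicate point is precisely that the kernel must simultaneously lie in $\TT$ and in $\FF$, which is what the right-exactness of the reflection buys us.

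Finally, for $(4)\Rightarrow(1)$ assume $\Lambda$ hereditary. Each indecomposable $\Ext$-projective $P_{i}$ of $\CC$ satisfies $\Ext^{1}_{\Lambda}(P_{i},P_{j})=0$ for all $j$, since $P_{j}\in\CC$, so any finite direct sum of pairwise non-isomorphic indecomposable $\Ext$-projectives is rigid. Over a hereditary algebra a rigid module is a partial tilting module and therefore has at most $|\Lambda|$ non-isomorphic indecomposable summands, as it can be completed to a tilting module by Bongartz's lemma; hence there are only finitely many indecomposable $\Ext$-projectives, and their direct sum $P$ satisfies $\add P=\{\text{$\Ext$-projectives of }\CC\}$. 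Together with $(4)$ this makes $P$ an $\Ext$-progenerator, proving $(1)$. The hereditary hypothesis thus enters only through this bounded-rigidity argument, which closes the cycle.
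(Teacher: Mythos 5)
Your argument breaks at the claim, made in the proof of (2) $\Rightarrow$ (3) and reused in (2) $\Rightarrow$ (4), that ``the torsion class generated by a single module is functorially finite.'' This conflates $\Fac M$ with $\TTT(M)=\Filt(\Fac M)$: the Auslander--Smal{\o}/Smal{\o} theorem says that the functorially finite torsion classes are exactly those of the form $\Fac M$, but $\TTT(M)$ is in general strictly larger and need not be functorially finite. For a concrete counterexample, let $\Lambda$ be the Kronecker algebra and $M=R$ a quasi-simple regular module in a homogeneous tube. Then $\TTT(R)$ contains every tube module $R^{(n)}$ and every preinjective; since $\Hom_\Lambda(\text{preinjective},\text{regular})=0$ and the image of any map from a tube module of quasi-length at most $n$ into $R^{(m)}$ lands in the quasi-socle $R^{(n)}$, no single module generates $\TTT(R)$, so $\TTT(R)\neq\Fac N$ for any $N$ and is not functorially finite. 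Worse, the failure occurs exactly where you need the claim: the paper notes (after Lemma \ref{lem:cover_general}, citing \cite[Example 4.13]{Asa}) that a covariantly finite IE-closed (even wide) subcategory $\CC$ need not be left finite; by the already-established implication (3) $\Rightarrow$ (2) such a $\CC$ has a finite cover $M\in\CC$, and then $\TTT(\CC)=\TTT(M)$ is \emph{not} functorially finite. So what your construction actually proves is ``$\TTT(\CC)$ functorially finite $\Rightarrow$ (3) and (4),'' i.e.\ essentially the implication (1) $\Rightarrow$ (3) of Lemma \ref{lem:cover_general} (plus enough $\Ext$-projectives), not the implications (2) $\Rightarrow$ (3) $\Rightarrow$ (4) of the present lemma, which are strictly stronger in the non-hereditary setting.

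The remaining pieces are sound: (1) $\Rightarrow$ (2) and (3) $\Rightarrow$ (2) are correct; your torsion-free reflection computation (exactness of $\overline{T'}\to\bar P\to C\to 0$ with kernel $K=T'/uP$ lying in $\TT\cap\FF$) is a correct \emph{conditional} argument, and in fact gives a nice self-contained alternative to the paper's use of \cite{Kal} in Lemma \ref{lem:cover_general}; and your (4) $\Rightarrow$ (1) via bounded rigidity over a hereditary algebra is exactly the content of \cite[Proposition 3.1(2)]{Eno} that the paper cites. To repair the two broken implications you must work with the cover $M$ directly rather than pass through $\TTT(\CC)$: the paper obtains (2) $\Leftrightarrow$ (3) from the dual of \cite[Theorem 4.5]{AS}, whose proof uses the image-closedness of $\CC$ and the cover $M$ itself to build left approximations, and (3) $\Rightarrow$ (4) from the dual of \cite[Proposition 1.1]{Moh}, a direct argument with left $\CC$-approximations that never needs any finiteness of $\TTT(\CC)$.
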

\begin{proof}
  (1) $\Rightarrow$ (2): An $\Ext$-progenerator $P$ of $\CC$ clearly satisfies $\CC\subseteq\Fac P$.

  (2) $\Leftrightarrow$ (3): This follows from the dual of \cite[Theorem 4.5]{AS}.
  
  (3) $\Rightarrow$ (4): This is the dual of \cite[Proposition 1.1]{Moh}.

  (4) $\Rightarrow$ (1): This follows from \cite[Proposition 3.1 (2)]{Eno}.
  
\end{proof}
The authors do not know whether these conditions are equivalent in general. The implication (4) $\Rightarrow$ (1) can be summarized as follows:
\begin{question}
  Let $\CC$ be an IE-closed subcategory of $\mod\Lambda$ with enough $\Ext$-projectives. Then is the number of indecomposable $\Ext$-projective objects in $\CC$ finite up to isomorphism?
\end{question}

Next, according to \cite{Asa}, we introduce the following finiteness condition.
\begin{definition}
  Let $\CC$ be an IE-closed subcategory of $\mod\Lambda$.
  We say that $\CC$ is \emph{left finite} if $\TTT(\CC)$ is functorially finite. Similarly, we say that $\CC$ is \emph{right finite} if $\FFF(\CC)$ is functorially finite.
\end{definition}

We have the following relation between left finiteness and covariant finiteness. This extends \cite[Propositions 2.28, 4.11]{Asa}, where $\CC$ is assumed to be a wide subcategory.
We remark that these conditions are not equivalent in general, see \cite[Example 4.13]{Asa}.
\begin{lemma}\label{lem:cover_general}
  Let $\CC$ be an IE-closed subcategory of $\mod\Lambda$.
  Consider the following conditions:
  \begin{enumerate}
    \item $\CC$ is left finite.
    \item There exist a torsion class $\TT$ and a torsion-free class $\FF$ such that $\CC=\TT\cap\FF$ and $\TT$ is functorially finite.
    \item $\CC$ is covariantly finite.
  \end{enumerate}
  Then the implications {\upshape (1) $\Rightarrow$ (2) $\Rightarrow$ (3)} hold.
  Moreover, if $\Lambda$ is hereditary, then all the conditions are equivalent.
\end{lemma}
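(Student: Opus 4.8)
The plan is to establish the two general implications by elementary manipulation and then to close the cycle under the hereditary hypothesis using the structure of $\Fac P$ for a rigid module $P$.

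The implication \textup{(1) $\Rightarrow$ (2)} is essentially a restatement. If $\CC$ is left finite, then $\TTT(\CC)$ is a functorially finite torsion class, and Proposition~\ref{prop:IE-intersection} gives $\CC = \TTT(\CC) \cap \FFF(\CC)$; so one takes $\TT = \TTT(\CC)$ and $\FF = \FFF(\CC)$.

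For \textup{(2) $\Rightarrow$ (3)} the idea is to manufacture a finite cover of $\CC$ that itself lies in $\CC$, and then invoke Lemma~\ref{lem:ie-cov}. Write $\CC = \TT \cap \FF$ with $\TT$ functorially finite, and let $(\GG, \FF)$ be the torsion pair whose torsion-free class is $\FF$; for $X \in \mod\Lambda$ write the canonical sequence $0 \to g(X) \to X \to f(X) \to 0$ with $g(X)\in\GG$ and $f(X)\in\FF$, so that $X \to f(X)$ is a left $\FF$-approximation. Since $\TT$ is covariantly finite, Lemma~\ref{lem:ie-cov} gives $M \in \TT$ with $\TT \subseteq \Fac M$, whence $\TT = \Fac M$ as $\TT$ is closed under quotients. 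I would then set $N := f(M)$: it lies in $\FF$ by construction and in $\TT$ as a quotient of $M\in\TT$, so $N \in \CC$. To check $\CC \subseteq \Fac N$, take $C \in \CC$ and a surjection $M^n \defl C$ (possible since $C \in \TT = \Fac M$); because $C \in \FF$, this map kills the torsion part and factors through $M^n \to f(M^n) = N^n$, forcing $N^n \defl C$. Thus $N\in\CC$ is a finite cover of $\CC$, and Lemma~\ref{lem:ie-cov}, (2)$\Leftrightarrow$(3), yields covariant finiteness.

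Finally, for the hereditary case it remains to prove \textup{(3) $\Rightarrow$ (1)}. Assume $\Lambda$ hereditary and $\CC$ covariantly finite. By Lemma~\ref{lem:ie-cov}, all of its conditions hold, so $\CC$ has an $\Ext$-progenerator $P = \P(\CC)$ with $P \in \CC$ and $\CC \subseteq \Fac P$; in particular $\Ext^1_\Lambda(P,P)=0$, so $P$ is rigid. The crucial input is the standard fact that over a hereditary algebra $\Fac P$ of a rigid module $P$ is a functorially finite torsion class: closure under extensions follows from $\Ext^1_\Lambda(P,\Fac P)=0$ together with $\Ext^2_\Lambda(-,-)=0$, and functorial finiteness then follows from Lemma~\ref{lem:ie-cov} since $P \in \Fac P$ is a finite cover. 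Granting this, $\Fac P$ is a torsion class containing $\CC$, so $\TTT(\CC) \subseteq \Fac P$, while $P \in \TTT(\CC)$ gives the reverse inclusion; hence $\TTT(\CC) = \Fac P$ is functorially finite, i.e. $\CC$ is left finite. I expect the main obstacle to be exactly this identification $\TTT(\CC) = \Fac\P(\CC)$: it genuinely uses $\gl\Lambda \le 1$ to know that $\Fac P$ is closed under extensions, whereas the most delicate point in the general argument is ensuring that replacing the cover $M$ of $\TT$ by its torsion-free quotient $N$ preserves surjectivity onto objects of $\CC$, which works precisely because those objects lie in $\FF$.
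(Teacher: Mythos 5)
Your proof is correct, and for the implication (2) $\Rightarrow$ (3) it takes a genuinely different and more self-contained route than the paper. The paper's proof is a one-liner by citation: a functorially finite torsion class $\TT$ has an $\Ext$-progenerator, a result of Kalck \cite[Proposition 5 (iii)]{Kal} then gives an $\Ext$-progenerator for $\CC = \TT \cap \FF$, and (1) $\Rightarrow$ (3) of Lemma~\ref{lem:ie-cov} concludes. You instead manufacture a finite cover of $\CC$ by hand: from a finite cover $M$ of $\TT$ (via (3) $\Rightarrow$ (2) of Lemma~\ref{lem:ie-cov}) you pass to the torsion-free quotient $N = f(M)$ for the torsion pair $(\GG,\FF)$, note $N \in \TT \cap \FF = \CC$, and your factorization argument is sound: for $C \in \CC$ any surjection $M^n \defl C$ vanishes on $g(M^n) = g(M)^n$ because $\Hom_\Lambda(\GG,\FF)=0$, hence factors through a surjection $N^n \defl C$, so $\CC \subseteq \Fac N$ and (2) $\Leftrightarrow$ (3) of Lemma~\ref{lem:ie-cov} applies. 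What you gain is the elimination of the external reference (your cover argument is elementary torsion theory); what the paper's route buys is the stronger conclusion that $\CC$ itself has an $\Ext$-progenerator, not merely a finite cover — though for covariant finiteness the cover suffices. Your (1) $\Rightarrow$ (2) and the hereditary (3) $\Rightarrow$ (1) coincide with the paper's, which simply observes that the rigid $\Ext$-progenerator $P$ is partial tilting over a hereditary algebra, so that $\Fac P$ is a functorially finite torsion class, and then proves $\TTT(\CC) = \Fac P$ exactly as you do. One small imprecision in your write-up: you attribute the \emph{functorial} finiteness of $\Fac P$ to Lemma~\ref{lem:ie-cov}, but that lemma only delivers covariant finiteness; the contravariant half is automatic because $\Fac P$ is a torsion class, the inclusion of the torsion submodule $tX \hookrightarrow X$ being a right $\Fac P$-approximation for every $X$ (equivalently, one invokes the standard Auslander--Smal{\o} fact that $\Fac M$ is always contravariantly finite). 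With that one sentence supplied, your argument is complete.
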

\begin{proof}
  (1) $\Rightarrow$ (2): This follows from Proposition~\ref{prop:IE-intersection}. 
  
  (2) $\Rightarrow$ (3): Since $\TT$ is a functorially finite torsion class, it has an $\Ext$-progenerator. Then \cite[Proposition 5 (iii)]{Kal} implies that $\CC = \TT \cap \FF$ also has an $\Ext$-progenerator. Now (1) $\Rightarrow$ (3) of Lemma \ref{lem:ie-cov} implies that $\CC$ is covariantly finite.
  
  (3) $\Rightarrow$ (1): Suppose that $\Lambda$ is hereditary. By Lemma~\ref{lem:ie-cov}, there is an $\Ext$-progenerator $P$ of $\CC$. Then $P$ is a partial tilting module since $P$ is rigid, so $\Fac P$ is a functorially finite torsion class. We claim $\TTT(\CC)=\Fac P$, which finishes the proof. Indeed, $P \in \CC$ implies $\Fac P \subseteq \TTT(\CC)$. On the other hand, since $\Fac P$ is a torsion class satisfying $\CC \subseteq \Fac P$, we have $\TTT(\CC) \subseteq \Fac P$. 
\end{proof}

Finally, we give a characterization of $\tau$-tilting finite algebras by using IE-closed subcategories. An artin algebra $\Lambda$ is \emph{$\tau$-tilting finite} if there are only finitely many torsion classes in $\mod\Lambda$, or equivalently, every torsion class in $\mod\Lambda$ is functorially finite (see \cite[Theorem 3.8]{DIJ}). The following is an analogue of \cite[Proposition 4.20]{ES}, where the case for ICE-closed subcategories was proved.

\begin{proposition}\label{prop:tau-tilt-fin}
The following conditions are equivalent.
\begin{enumerate}
  \item $\Lambda$ is $\tau$-tilting finite.
  \item The set of IE-closed subcategories of $\mod\Lambda$ is a finite set.
  \item Every IE-closed subcategory of $\mod\Lambda$ is functorially finite.
\end{enumerate}
\end{proposition}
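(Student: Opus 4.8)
The plan is to prove all three equivalences by repeatedly exploiting two facts established earlier: that every torsion class and every torsion-free class is itself IE-closed (the image of a map between objects is a quotient of the source and a submodule of the target), and that by Proposition~\ref{prop:IE-intersection} every IE-closed subcategory $\CC$ can be written canonically as $\CC = \TTT(\CC) \cap \FFF(\CC)$. I would also use throughout that $\tau$-tilting finiteness is left--right symmetric: $\Lambda$ is $\tau$-tilting finite if and only if $\Lambda^{\op}$ is, equivalently there are only finitely many torsion-free classes in $\mod\Lambda$ (via the duality $D$, which identifies torsion-free classes in $\mod\Lambda$ with torsion classes in $\mod\Lambda^{\op}$). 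Combined with \cite[Theorem 3.8]{DIJ}, this also guarantees that every torsion-free class in $\mod\Lambda$ is functorially finite when $\Lambda$ is $\tau$-tilting finite.

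First I would settle (1) $\Leftrightarrow$ (2). For (1) $\Rightarrow$ (2), since $\Lambda$ is $\tau$-tilting finite there are only finitely many torsion classes and, by the symmetry above, only finitely many torsion-free classes; as each IE-closed subcategory is an intersection $\TT \cap \FF$ of one of each, the number of IE-closed subcategories is bounded by the product and hence finite. For (2) $\Rightarrow$ (1), I note that every torsion class is in particular IE-closed, so finitely many IE-closed subcategories forces finitely many torsion classes, that is, $\tau$-tilting finiteness.

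Next I would dispose of (3) $\Rightarrow$ (1) in the same spirit: since every torsion class is IE-closed, hypothesis (3) makes every torsion class functorially finite, which is exactly $\tau$-tilting finiteness by \cite[Theorem 3.8]{DIJ}.

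The remaining and only substantive implication is (1) $\Rightarrow$ (3). Given an IE-closed $\CC$, I would write $\CC = \TTT(\CC) \cap \FFF(\CC)$. Because $\Lambda$ is $\tau$-tilting finite, $\TTT(\CC)$ is functorially finite, so $\CC$ is left finite and Lemma~\ref{lem:cover_general} gives that $\CC$ is covariantly finite. Symmetrically, using that $\Lambda^{\op}$ is $\tau$-tilting finite so that $\FFF(\CC)$ is functorially finite, the dual of Lemma~\ref{lem:cover_general} gives that $\CC$ is contravariantly finite; together these yield functorial finiteness. The one point requiring care---and the step I expect to be the main obstacle---is this contravariant side: it hinges on the left--right symmetry of $\tau$-tilting finiteness to ensure $\FFF(\CC)$ is functorially finite, and on correctly dualizing Lemma~\ref{lem:cover_general}, whose proof in the excerpt is phrased only for the covariant (left-finite) case.
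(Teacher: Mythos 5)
Your proof is correct and follows essentially the same route as the paper: both (2) and (3) imply (1) because torsion classes are IE-closed, (1) $\Rightarrow$ (2) via Proposition~\ref{prop:IE-intersection} and the finiteness of torsion-free classes, and (1) $\Rightarrow$ (3) via left finiteness and Lemma~\ref{lem:cover_general} together with its dual. The step you flag as the main obstacle---dualizing Lemma~\ref{lem:cover_general} using the left--right symmetry of $\tau$-tilting finiteness under the duality $D$---is exactly what the paper does with its brief ``Dually, we can show\dots,'' and your justification of that symmetry is sound.
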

\begin{proof}
Since torsion classes are IE-closed subcategories, both (2) and (3) imply (1).

(1) $\Rightarrow$ (2): Since the set of torsion classes is finite, so is that of torsion-free classes. Then the claim follows easily from Proposition~\ref{prop:IE-intersection}.

(1) $\Rightarrow$ (3): Every IE-closed subcategory is left finite since all torsion classes in $\mod\Lambda$ are functorially finite. Hence every IE-closed subcategory is covariantly finite from Lemma~\ref{lem:cover_general}. Dually, we can show that every IE-closed subcategory is contravariantly finite, so it is functorially finite.
\end{proof}

\subsection{The hereditary case}
In this subsection, \emph{we assume that $\Lambda$ is hereditary} and give a proof of Theorem~\ref{thm:a}. In this case, note that a rigid module is precisely a partial tilting module.
We refer the reader to Appendix \ref{sec:app} for some definitions and facts in tilting theory.

We have the following observation particular to the hereditary case.
\begin{lemma}\cite[Lemma 3.2]{Eno}\label{lem:add}
Let $\Lambda$ be a hereditary artin algebra and $P$ a rigid $\Lambda$-module. Then $\Fac P \cap \Sub P =\add P$ holds, that is, $\add P$ is closed under images.
\end{lemma}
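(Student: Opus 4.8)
The plan is to prove both inclusions of the set equality $\Fac P \cap \Sub P = \add P$, where the inclusion $\add P \subseteq \Fac P \cap \Sub P$ is trivial (any $P' \in \add P$ is a quotient of itself and a submodule of itself). So the content lies in showing $\Fac P \cap \Sub P \subseteq \add P$, which is equivalent to the stated reformulation that $\add P$ is closed under images: given any morphism $\varphi \colon P_1 \to P_2$ with $P_1, P_2 \in \add P$, one checks that $\im\varphi$ lies in $\Fac P \cap \Sub P$ (it is a quotient of $P_1$ and a submodule of $P_2$), so proving $\Fac P \cap \Sub P \subseteq \add P$ in particular forces $\im\varphi \in \add P$.

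First I would take an arbitrary $M \in \Fac P \cap \Sub P$. From $M \in \Fac P$ we get a short exact sequence $0 \to K \to P' \to M \to 0$ with $P' \in \add P$, and from $M \in \Sub P$ we get an inclusion $M \hookrightarrow P''$ with $P'' \in \add P$. The key homological input is that $\Lambda$ is hereditary, so $\Ext^2_\Lambda$ vanishes identically, and that $P$ is rigid, meaning $\Ext^1_\Lambda(P,P) = 0$. I would like to show that the surjection $P' \defl M$ splits, which would exhibit $M$ as a direct summand of $P' \in \add P$, hence $M \in \add P$. Splitting is equivalent to showing $\Ext^1_\Lambda(M, K) = 0$, or more precisely that the class of the sequence vanishes.

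The crucial step is to control $\Ext^1_\Lambda(M, K)$. Here I would exploit the inclusion $M \hookrightarrow P''$ together with heredity. From $M \in \Sub P$, one can hope to deduce that the syzygy $K$ (or rather its relation to $P$) interacts well with $M$: applying $\Hom_\Lambda(-, K)$ to $0 \to M \to P'' \to C \to 0$ (where $C = \coker$) and using that $\Lambda$ hereditary gives $\Ext^2 = 0$, one relates $\Ext^1_\Lambda(M,K)$ to $\Ext^1_\Lambda(P'', K)$. Since $P'' \in \add P$, it suffices to show $\Ext^1_\Lambda(P, K) = 0$. For this I would apply $\Hom_\Lambda(P, -)$ to $0 \to K \to P' \to M \to 0$: heredity gives the exact sequence $\Ext^1_\Lambda(P, P') \to \Ext^1_\Lambda(P, M) \to \Ext^2_\Lambda(P, K) = 0$, and rigidity gives $\Ext^1_\Lambda(P, P') = 0$, so $\Ext^1_\Lambda(P, M) = 0$; combined with a dual/companion computation this should pin down the vanishing needed.

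The main obstacle I anticipate is correctly threading the two short exact sequences (the $\Fac$ one and the $\Sub$ one) so that rigidity of $P$ and vanishing of $\Ext^2$ combine to kill exactly the extension class witnessing $M \hookrightarrow P''$ or $P' \defl M$; the bookkeeping of which $\Ext$-group must vanish, and on which side to apply $\Hom_\Lambda(P,-)$ versus $\Hom_\Lambda(-,P)$, is delicate. A cleaner route, which I would try in parallel, is to show directly that $M$ is both $\Ext$-projective and $\Ext$-injective in the IE-closed subcategory $\add P$ using only $\Ext^1_\Lambda(P,P)=0$ and $\Ext^2_\Lambda = 0$, and then invoke that $M \in \Fac P \cap \Sub P$ forces $M \in \add P$ because any such $M$ admits both a surjection from and an injection into $\add P$ that must split by the rigidity-plus-heredity vanishing. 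I expect the heredity hypothesis to be essential precisely because it converts the relevant $\Ext^2$ terms to zero, turning long exact sequences into the short exact pieces that make the splitting arguments go through.
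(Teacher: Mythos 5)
Your skeleton matches the standard argument (note that the paper does not prove this lemma itself but cites \cite[Lemma 3.2]{Eno}): reduce to showing that the surjection $P' \defl M$ splits, use heredity to get a surjection $\Ext_\Lambda^1(P'',K) \defl \Ext_\Lambda^1(M,K)$ from the embedding $M \hookrightarrow P''$, and thereby reduce everything to $\Ext_\Lambda^1(P,K)=0$. But exactly at this point there is a genuine gap, which you half-acknowledge. Applying $\Hom_\Lambda(P,-)$ to $0 \to K \to P' \to M \to 0$ gives the exact sequence $\Hom_\Lambda(P,P') \to \Hom_\Lambda(P,M) \to \Ext_\Lambda^1(P,K) \to \Ext_\Lambda^1(P,P') = 0$, so rigidity buys you only $\Ext_\Lambda^1(P,K) \iso \coker\bigl(\Hom_\Lambda(P,P') \to \Hom_\Lambda(P,M)\bigr)$, not its vanishing; the group you actually compute, $\Ext_\Lambda^1(P,M)=0$, is not the one you need, and no ``dual/companion computation'' will kill $\Ext_\Lambda^1(P,K)$ for an \emph{arbitrary} surjection. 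Your fallback claim that any surjection from $\add P$ onto such an $M$ ``must split by the rigidity-plus-heredity vanishing'' is simply false: take $\Lambda = k(1 \to 2)$ and $P = P_1 \oplus S_1$, which is rigid since $\Hom_\Lambda(P_1, S_2)=0$; then $M = S_1$ lies in $\add P = \Fac P \cap \Sub P$, but the surjection $P_1 \defl S_1$ has kernel $S_2$ with $\Ext_\Lambda^1(S_1,S_2) \neq 0$ and does not split.

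The missing idea is a single choice at the start: take $p \colon P' \to M$ to be a \emph{right $\add P$-approximation}, which exists because $\add P$ is functorially finite, and which is surjective because $M \in \Fac P$ (any surjection from $\add P$ onto $M$ factors through $p$). The approximation property says precisely that $\Hom_\Lambda(P,p) \colon \Hom_\Lambda(P,P') \to \Hom_\Lambda(P,M)$ is surjective, so the exact sequence above together with $\Ext_\Lambda^1(P,P')=0$ now yields $\Ext_\Lambda^1(P,K)=0$. Your hereditary step then applies verbatim: the embedding $M \hookrightarrow P''$ induces a surjection $\Ext_\Lambda^1(P'',K) \defl \Ext_\Lambda^1(M,K)$ since $\Ext_\Lambda^2$ vanishes, whence $\Ext_\Lambda^1(M,K)=0$, the sequence splits, and $M \in \add P' \subseteq \add P$. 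This is exactly the proof in the cited reference \cite[Lemma 3.2]{Eno}; with this one-line repair (and discarding the second ``parallel route'', which cannot be salvaged as stated) your argument goes through, and the remaining bookkeeping in your proposal --- the trivial inclusion $\add P \subseteq \Fac P \cap \Sub P$ and the reduction of closure under images to this inclusion --- is correct.
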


\begin{proposition}\label{prop:pair_eic}
Let $(P,I)$ be a twin rigid module.
Set $\CC = \Fac P \cap \Sub I$. Then the following statements hold.
\begin{enumerate}
  \item $\CC$ is an IE-closed subcategory of $\mod\Lambda$.
  \item $\CC$ is functorially finite in $\mod\Lambda$.
  \item $\CC$ has an $\Ext$-progenerator $P$ and an $\Ext$-injective cogenerator $I$.
\end{enumerate}
\end{proposition}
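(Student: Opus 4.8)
The plan is to prove the three assertions in the order (1), (3), (2), since the functorial finiteness in (2) comes out most cleanly once the $\Ext$-progenerator has been pinned down in (3). For (1) I would first record that, as $\Lambda$ is hereditary and $P$ is rigid, $P$ is a partial tilting module, so $\Fac P$ is a (functorially finite) torsion class; dually $\Sub I$ is a torsion-free class. Then $\CC = \Fac P \cap \Sub I$ is IE-closed by Proposition~\ref{prop:IE-intersection}. Along the way I would isolate facts used throughout: the sequence~\eqref{eq:twin-rigid-1} gives $P \hookrightarrow I^0 \in \add I$, so $P \in \Sub I$ and hence $\add P \subseteq \CC$, while \eqref{eq:twin-rigid-2} gives $I \in \Fac P$ and hence $\add I \subseteq \CC$. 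Moreover $P$ is $\Ext$-projective in $\CC$: for $C \in \CC \subseteq \Fac P$ one applies $\Hom_\Lambda(P,-)$ to a surjection $P^{\oplus m} \twoheadrightarrow C$ and uses $\Ext^1_\Lambda(P,P)=0$ together with the fact that $\Lambda$ is hereditary to conclude $\Ext^1_\Lambda(P,C)=0$; dually $I$ is $\Ext$-injective.

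The heart of the proposition is (3), and the main obstacle is to show that $\CC$ has \emph{enough} $\Ext$-projectives with the $\Ext$-projectives being exactly $\add P$. The naive attempt — taking the kernel $C'$ of a right $\add P$-approximation of $C \in \CC$ and hoping $C' \in \Fac P$ — fails, since $\Ext^1_\Lambda(P,C')=0$ does not force $C' \in \Fac P$ (when $P$ is projective this vanishing is vacuous, yet $\Sub I$ contains objects outside $\Fac P$). Instead I would build the resolution directly from \eqref{eq:twin-rigid-2}. Given $C \in \CC$, choose a monomorphism $C \hookrightarrow I^{\oplus n}$ (possible as $C \in \Sub I$) and form the pullback $D$ of this map along the epimorphism $P_0^{\oplus n} \twoheadrightarrow I^{\oplus n}$ coming from \eqref{eq:twin-rigid-2}. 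This produces a short exact sequence $0 \to P_1^{\oplus n} \to D \to C \to 0$ together with a monomorphism $D \hookrightarrow P_0^{\oplus n}$. Now $D \in \Fac P$ because $\Fac P$ is closed under extensions and both $P_1^{\oplus n}$ and $C$ lie in $\Fac P$, while $D \in \Sub P$ because $D$ embeds into $P_0^{\oplus n} \in \add P$; hence $D \in \Fac P \cap \Sub P = \add P$ by Lemma~\ref{lem:add}. Since $\add P \subseteq \CC$, this is the sought $\Ext$-projective presentation of $C$ with $\add P$-terms. It shows simultaneously that $\CC$ has enough $\Ext$-projectives and that any $\Ext$-projective object is a summand of such a $D$, hence lies in $\add P$; therefore $P = \P(\CC)$.

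The statement about $I$ is the exact dual, obtained by interchanging the roles of submodules/quotients and of \eqref{eq:twin-rigid-2}/\eqref{eq:twin-rigid-1}: for $C \in \CC \subseteq \Fac P$ one pushes out an epimorphism $P^{\oplus m} \twoheadrightarrow C$ along the monomorphism $P^{\oplus m} \hookrightarrow (I^0)^{\oplus m}$ from \eqref{eq:twin-rigid-1}, producing $0 \to C \to D' \to (I^1)^{\oplus m} \to 0$ with $D' \in \Fac I \cap \Sub I = \add I$ (applying Lemma~\ref{lem:add} to the rigid module $I$); this gives $I = \I(\CC)$. With (3) established, statement (2) follows at once: $\CC$ has an $\Ext$-progenerator, so it is covariantly finite by Lemma~\ref{lem:ie-cov}, and dually contravariantly finite, hence functorially finite. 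I expect the pullback/pushout construction to be the only genuinely delicate point; everything else is bookkeeping with the closure properties of torsion and torsion-free classes together with Lemma~\ref{lem:add}.
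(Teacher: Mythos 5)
Your proof is correct, but it takes a noticeably more constructive route than the paper at the one genuinely delicate point, namely part (3). The paper first obtains \emph{enough} $\Ext$-projectives abstractly: since $\CC$ is IE-closed with finite cover $P$, it invokes Lemma~\ref{lem:ie-cov} (whose implications {\upshape(2)} $\Leftrightarrow$ {\upshape(3)} $\Rightarrow$ {\upshape(4)} rest on the Auslander--Smal{\o} and Mohamed approximation results), and only then identifies the $\Ext$-projectives by lifting a monomorphism $C \hookrightarrow I^{n}$ through the epimorphism $P_0^{n} \twoheadrightarrow I^{n}$ from \eqref{eq:twin-rigid-2} and applying Lemma~\ref{lem:add}. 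Your pullback of $C \hookrightarrow I^{\oplus n}$ along that same epimorphism produces the resolution $0 \to P_1^{\oplus n} \to D \to C \to 0$ with $D \in \Fac P \cap \Sub P = \add P$ explicitly, so you get enough $\Ext$-projectives and the identification of $\Ext$-projectives (via splitting) in one stroke, bypassing the functorial-finiteness machinery in (3) entirely; note that your lifting-free argument and the paper's lifting argument are really the same diagram, since lifting $\iota$ is equivalent to splitting your pullback sequence. Your construction also yields slightly more than the paper states at this point, namely explicit $\add P$-resolutions showing $\CC \subseteq \ccok P$. Two further structural differences: you treat the $\Ext$-injective side by the dual pushout explicitly rather than appealing to duality, and you derive (2) from (3) via Lemma~\ref{lem:ie-cov} {\upshape(1)} $\Rightarrow$ {\upshape(3)} and its dual, whereas the paper proves (2) independently through Lemma~\ref{lem:cover_general} {\upshape(2)} $\Rightarrow$ {\upshape(3)} with $\TT = \Fac P$ --- both routes ultimately pass through the Auslander--Smal{\o} criterion, so nothing is saved or lost there. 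In short: the paper's proof is shorter by leaning on general approximation theory; yours is more self-contained and makes the homological mechanism of twin rigidity visible, and your parenthetical observation that the naive kernel-of-approximation argument fails (since $\Ext^1_\Lambda(P,C')=0$ does not force $C' \in \Fac P$) correctly identifies why some such construction is needed.
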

\begin{proof}

(1)
Since $P$ is a partial tilting module, $\Fac P$ is a torsion class. Dually, $\Sub I$ is a torsion-free class. Thus $\CC=\Fac P\cap\Sub I$ is an IE-closed subcategory. 

(2)
This follows from  the implication (2) $\Rightarrow$ (3) of  Lemma~\ref{lem:cover_general} and its dual. 

(3)
We only prove the statement for $P$. Since $\CC$ is an IE-closed subcategory contained in $\Fac P$, Lemma \ref{lem:ie-cov} implies that $\CC$ has enough $\Ext$-projectives. Thus it suffices to show that an object $C \in \CC$ is $\Ext$-projective in $\CC$ if and only if $C \in \add P$.

Let $C\in\CC$ be an $\Ext$-projective object in $\CC$. Since $C$ is in $\Sub I$, there exists a monomorphism $\iota \colon C \hookrightarrow I^{n}$ for some $n$. Then we have the following diagram with $P_0, P_1 \in \add P$ by \eqref{eq:twin-rigid-2}.
\[
\begin{tikzcd}
  & & & C \dar[hookrightarrow, "\iota"] \ar[dl, dashed, hookrightarrow, "\ov{\iota}"'] \\
  0 \rar & P_{1}^{n} \rar & P_{0}^{n} \rar & I^{n} \rar & 0
\end{tikzcd}
\]
Since $C$ is $\Ext$-projective in $\CC$, we have a map $\ov{\iota} \colon C \to P_{0}^{n}$ which makes the above diagram commute. Then $\ov{\iota}$ is a monomorphism, so we obtain $C\in \Fac P\cap\Sub P$, which implies $C \in \add P$ by Lemma~\ref{lem:add}.

Conversely, we claim that $P$ is $\Ext$-projective in $\CC$. In fact, we have $\Ext_{\Lambda}^{1}(P,\Fac P) = 0$ since $P$ is partial tilting, and thus $\Ext_\Lambda^1(P, \CC) = 0$ holds by $\CC \subseteq \Fac P$. Therefore, every object in $\add P$ is $\Ext$-projective.
\end{proof}

\begin{proposition}\label{prop:eic_pair}
Let $\CC$ be a functorially finite IE-closed subcategory of $\mod\Lambda$. Then the following statements hold.
\begin{enumerate}
  \item $\CC$ has an $\Ext$-progenerator and an $\Ext$-injective cogenerator, which we denote by $P$ and $I$ respectively.
  \item $(P, I)$ is a twin rigid module. 
  \item $\CC = \Fac P \cap\Sub I$ holds.
\end{enumerate}
\end{proposition}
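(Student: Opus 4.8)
The plan is to handle the three parts in order, first producing $P$ and $I$, then verifying the twin rigid axioms, and finally identifying $\CC$ with $\Fac P \cap \Sub I$. For part (1) I would read off $P$ and $I$ from functorial finiteness: since $\CC$ is covariantly finite, the equivalence of the conditions in Lemma~\ref{lem:ie-cov} (which holds because $\Lambda$ is hereditary) gives an $\Ext$-progenerator, and dually contravariant finiteness gives an $\Ext$-injective cogenerator. I set $P := \P(\CC)$ and $I := \I(\CC)$.

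For part (2), rigidity is immediate: $P$ is $\Ext$-projective in $\CC$, so $\Ext_\Lambda^1(P,P)=0$, and dually $\Ext_\Lambda^1(I,I)=0$. To produce the sequence \eqref{eq:twin-rigid-1} I would apply ``enough $\Ext$-injectives'' to $P \in \CC$, obtaining a short exact sequence $0 \to P \to I^0 \to C \to 0$ with $I^0 \in \add I$ and $C \in \CC$. The step I expect to be the main obstacle is upgrading $C \in \CC$ to $C \in \add I$, and this is precisely where heredity is used: applying $\Hom_\Lambda(X,-)$ for an arbitrary $X \in \CC$ yields an exact sequence $\Ext_\Lambda^1(X,I^0) \to \Ext_\Lambda^1(X,C) \to \Ext_\Lambda^2(X,P)$, in which the left term vanishes because $I^0$ is $\Ext$-injective and the right term vanishes because $\Lambda$ is hereditary. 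Hence $\Ext_\Lambda^1(X,C)=0$ for every $X \in \CC$, so $C$ is $\Ext$-injective, i.e. $C \in \add I$, and the sequence is the desired \eqref{eq:twin-rigid-1}. Dually, applying ``enough $\Ext$-projectives'' to $I \in \CC$ and using $\Ext_\Lambda^2(I,-)=0$ produces a short exact sequence $0 \to P_1 \to P_0 \to I \to 0$ with $P_0 \in \add P$ and $P_1$ forced into $\add P$ by the same vanishing argument, giving \eqref{eq:twin-rigid-2}. Thus $(P,I)$ satisfies Definition~\ref{def:twin-rigid}.

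For part (3), the inclusion $\CC \subseteq \Fac P \cap \Sub I$ holds because $P$ is a finite cover of $\CC$ and $I$ is an $\Ext$-injective cogenerator, so every object of $\CC$ is both a quotient of an object of $\add P$ and a submodule of an object of $\add I$. For the reverse inclusion I would exploit closure under images directly. Given $X \in \Fac P \cap \Sub I$, I choose an epimorphism $P_0 \twoheadrightarrow X$ with $P_0 \in \add P$ and a monomorphism $X \hookrightarrow I_0$ with $I_0 \in \add I$, and consider the composite $f \colon P_0 \to I_0$. Since $\CC$ is IE-closed it is closed under direct summands, so $\add P \subseteq \CC$ and $\add I \subseteq \CC$; as $\CC$ is closed under images, $\im f \in \CC$. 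But $\im f \cong X$ because the first map is surjective and the second injective, so $X \in \CC$. This completes the identification $\CC = \Fac P \cap \Sub I$ and hence the proof.
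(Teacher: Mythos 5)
Your proposal is correct, and parts (1) and (3) coincide with the paper's argument: (1) is read off from Lemma~\ref{lem:ie-cov} and its dual (using heredity for the implication from covariant finiteness to the existence of an $\Ext$-progenerator), and (3) is exactly the paper's image argument, taking the image of the composite $P_0 \twoheadrightarrow X \hookrightarrow I_0$. Part (2), however, takes a genuinely different route. The paper starts from an $\add P$-presentation $P_1 \xrightarrow{f} P_0 \to I \to 0$ and invokes Lemma~\ref{lem:add} (for rigid $P$ over a hereditary algebra, $\Fac P \cap \Sub P = \add P$) to conclude $\im f \in \add P$, truncating the presentation into the short exact sequence \eqref{eq:twin-rigid-2}; the sequence \eqref{eq:twin-rigid-1} is obtained dually. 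You instead take the sequences supplied by enough $\Ext$-projectives and $\Ext$-injectives and upgrade the remaining term via the long exact sequence: from $0 \to P \to I^0 \to C \to 0$ you squeeze $\Ext^1_\Lambda(X,C)$ between $\Ext^1_\Lambda(X,I^0)=0$ ($I^0$ is $\Ext$-injective) and $\Ext^2_\Lambda(X,P)=0$ (heredity), so $C$ is $\Ext$-injective and hence lies in $\add I$ since $\Ext$-injectives are precisely $\add I$; dually for \eqref{eq:twin-rigid-2}. Both arguments use heredity, just packaged differently: the paper localizes it inside Lemma~\ref{lem:add} (a lemma it reuses elsewhere, e.g.\ in Proposition~\ref{prop:pair_eic} and Lemma~\ref{lem:cokp}), whereas your degree-shifting argument is self-contained and proves slightly more, namely that cosyzygies of $\Ext$-projectives in $\CC$ are $\Ext$-injective and vice versa, which makes transparent why the two defining sequences of a twin rigid module exist simultaneously. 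One small bookkeeping point in your favor: your appeal to closure under direct summands (images of idempotent endomorphisms) to get $\add P, \add I \subseteq \CC$ is legitimate for IE-closed subcategories, though it is not even needed since $P, I \in \CC$ and $\CC$ is additive by convention.
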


\begin{proof}
(1) 
This follows from Lemma~\ref{lem:ie-cov} and its dual. 
  
(2) 
By definition, $P$ and $I$ are rigid. Since $P$ is an $\Ext$-progenerator of $\CC$, there exists an exact sequence
\[
\begin{tikzcd}
  P_1 \rar["f"] & P_0 \rar & I \rar & 0
\end{tikzcd}
\]
with $P_0, P_1\in\add P$. By Lemma~\ref{lem:add}, we have $\im f\in\add P$, and hence we obtain the sequence \eqref{eq:twin-rigid-2}. By the dual argument, we obtain the sequence \eqref{eq:twin-rigid-1}.

(3)
Every object in $\Fac P \cap\Sub I$ is the image of some map $P' \to I'$ with $P' \in \add P$ and $I' \in \add I$. Since $P', I' \in \CC$ and $\CC$ is closed under images, we obtain $\Fac P \cap \Sub I \subseteq \CC$.
Conversely, every object in $\CC$ belongs to both $\Fac P$ and $\Sub I$ since $P$ is an $\Ext$-progenerator and $I$ is an $\Ext$-injective cogenerator of $\CC$. Thus $\CC = \Fac P \cap \Sub I$ holds.
\end{proof}

We say that a twin rigid module $(P,I)$ is \emph{basic} if both $P$ and $I$ are basic, and a twin rigid module $(P',I')$ is \emph{isomorphic to} $(P,I)$ if $P \iso P'$ and $I \iso I'$ hold.
Now we are ready to give a proof of our main theorem.
\begin{theorem}\label{thm:main}
  The assignments $\CC \mapsto (\P(\CC), \I(\CC))$ and $(P, I) \mapsto \Fac P \cap \Sub I$ give bijections between the following two sets:
  \begin{enumerate}
    \item The set of functorially finite IE-closed subcategories $\CC$ of $\mod\Lambda$.
    \item The set of isomorphism classes of basic twin rigid $\Lambda$-modules $(P,I)$.
  \end{enumerate}
\end{theorem}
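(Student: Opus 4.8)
The plan is to observe that Propositions~\ref{prop:pair_eic} and \ref{prop:eic_pair} have already done essentially all the work, so that the remaining task is purely to verify that the two assignments are well-defined and mutually inverse. Write $\Phi$ for $\CC \mapsto (\P(\CC), \I(\CC))$ and $\Psi$ for $(P,I) \mapsto \Fac P \cap \Sub I$. First I would check well-definedness. That $\Psi$ lands in set (1) is immediate from Proposition~\ref{prop:pair_eic}(1)--(2): for any twin rigid $(P,I)$, the subcategory $\Fac P \cap \Sub I$ is IE-closed and functorially finite. For $\Phi$, given a functorially finite IE-closed $\CC$, Proposition~\ref{prop:eic_pair}(1) guarantees that $\CC$ admits an $\Ext$-progenerator and an $\Ext$-injective cogenerator, so $\P(\CC)$ and $\I(\CC)$ are defined; these are basic by construction, and $(\P(\CC),\I(\CC))$ is twin rigid by Proposition~\ref{prop:eic_pair}(2). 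Hence $\Phi$ lands in set (2).

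Next I would establish $\Psi \circ \Phi = \mathrm{id}$. Starting from a functorially finite IE-closed subcategory $\CC$, this amounts to the equality $\Fac \P(\CC) \cap \Sub \I(\CC) = \CC$, which is exactly the content of Proposition~\ref{prop:eic_pair}(3) applied with $P = \P(\CC)$ and $I = \I(\CC)$.

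For the other composite $\Phi \circ \Psi = \mathrm{id}$, I would start from a basic twin rigid module $(P,I)$ and set $\CC = \Fac P \cap \Sub I$. By Proposition~\ref{prop:pair_eic}(3), $P$ is an $\Ext$-progenerator and $I$ an $\Ext$-injective cogenerator of $\CC$. Since the basic $\Ext$-progenerator $\P(\CC)$ is unique up to isomorphism (being the direct sum of all indecomposable $\Ext$-projectives of $\CC$) and $P$ is basic, we get $\P(\CC) \iso P$; dually $\I(\CC) \iso I$. Thus $\Phi(\Psi(P,I)) = (P,I)$ as isomorphism classes of basic modules.

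There is no serious obstacle at this stage: the two propositions supply every nontrivial ingredient, and the only point requiring care is the uniqueness of the basic $\Ext$-progenerator and cogenerator used in the $\Phi \circ \Psi = \mathrm{id}$ direction, which is built into the very definition of $\P(\CC)$ and $\I(\CC)$. All genuine difficulty—verifying the approximation/exact-sequence conditions and functorial finiteness—has been absorbed into Propositions~\ref{prop:pair_eic} and \ref{prop:eic_pair}, so the theorem follows by assembling the four verifications above.
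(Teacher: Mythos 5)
Your proposal is correct and follows exactly the paper's own route: the paper likewise deduces the theorem directly from Propositions~\ref{prop:pair_eic} and \ref{prop:eic_pair}, noting in particular that $P \iso \P(\Fac P \cap \Sub I)$ holds by Proposition~\ref{prop:pair_eic}~(3) together with the uniqueness of the basic $\Ext$-progenerator. Your write-up merely spells out the four verifications the paper leaves implicit in the phrase ``the assertion immediately follows,'' with no substantive difference.
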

\begin{proof}
  If $(P, I)$ is a basic twin rigid module, then we have $P \iso \P(\Fac P \cap \Sub I)$ by Proposition \ref{prop:pair_eic} (3) since a basic $\Ext$-progenerator of $\CC$ is unique up to isomorphism.
  Hence the assertion immediately follows from Propositions~\ref{prop:pair_eic} and \ref{prop:eic_pair}.
\end{proof}

\section{Completion and mutation of twin rigid modules}\label{sec:comandmut}
In this section, we discuss the relation between twin rigid modules and tilting modules for the hereditary case, and introduce completion and mutation of twin rigid modules by using those for tilting modules.

Throughout this section, we fix the following notation.
\begin{itemize}
  \item $\Lambda$ is a \emph{hereditary} artin algebra.
  \item $P \in \mod\Lambda$ is a rigid $\Lambda$-module.
  \item $\Gamma_P := \End_\Lambda(P)$ and $F_P := \Hom_\Lambda(P,-) \colon \mod\Lambda\to \mod\Gamma_P$.
  \item $\ccok P$ is the subcategory of $\mod \Lambda$ consisting of $X \in \mod \Lambda$ such that there is an exact sequence
    \[
    \begin{tikzcd}
      P_1 \rar & P_0 \rar & X \rar & 0
    \end{tikzcd}
    \]
    in $\mod\Lambda$ with $P_0, P_1 \in \add P$.
  \item $\WW_P$ is the smallest wide subcategory of $\mod\Lambda$ containing $P$. Note that $\ccok P \subseteq \WW_P$ holds.
\end{itemize}

Lemma~\ref{lem:add} immediately implies the following alternative description of $\ccok P$:
\begin{lemma}\label{lem:cokp}
  Let $X \in \mod\Lambda$. Then $X \in \ccok P$ if and only if there is a short exact sequence 
  \[
  \begin{tikzcd}
    0 \rar & P_1 \rar & P_0 \rar & X \rar & 0
  \end{tikzcd}
  \]
  in $\mod\Lambda$ with $P_0, P_1 \in \add P$.
\end{lemma}

\begin{remark}
  The definition of twin rigid modules is self-dual, that is, a pair $(P,I)$ of $\Lambda$-modules is a twin rigid $\Lambda$-module if and only if $(DI,DP)$ is a twin rigid $\Lambda^{\op}$-module. Due to this, we can obtain the dual results of this section, which fix $I$ instead of $P$. We omit them since we will not use them.
  \end{remark}

\subsection{Completion of twin rigid modules}

In this subsection, we discuss the relation between twin rigid modules and tilting modules, and introduce completion of twin rigid modules using the Bongartz completion of tilting modules.

We begin with the following observation on $\ccok P$.
\begin{proposition}\label{prop:ice}
  The following assertions hold for a rigid module $P \in \mod\Lambda$.
  \begin{enumerate}
    \item There is a hereditary artin algebra $\Lambda_P$ such that $\WW_P$ is equivalent to $\mod\Lambda_P$ and that $P \in \WW_P$ is a tilting $\Lambda_P$-module under this equivalence.
    \item $\ccok P$ is a torsion class in $\WW_P$, and we have $\ccok P = \Fac_{\WW_P} P$, where $\Fac_{\WW_P}$ denotes $\Fac$ in an abelian category $\WW_P$.
    \item $\ccok P$ is closed under images, cokernels, and extensions.
    \item $\Sub(DP)$ is a torsion-free class in $\mod\Gamma_P$, and $F_P$ induces an exact equivalence between $\ccok P$ and $\Sub(DP)$.
    \item For every $X \in \ccok P$, we have $\pd_{\Gamma_P} F_P X \leq 1$.
  \end{enumerate}
\end{proposition}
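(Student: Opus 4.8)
The plan is to reduce the whole statement to classical tilting theory inside the abelian category $\WW_P$, so the crucial first step, which I expect to be the main obstacle, is part (1). I would begin by invoking the structural fact that any wide subcategory of the module category of a hereditary artin algebra is again equivalent, as an abelian category, to $\mod\Lambda_P$ for some hereditary artin algebra $\Lambda_P$ (a length abelian category that is hereditary and has a projective generator). It then remains to identify $P$ as a tilting $\Lambda_P$-module. Since $P$ is rigid in $\mod\Lambda$ and $\WW_P$ is extension-closed, we have $\Ext^1_{\WW_P}(P,P)=\Ext^1_\Lambda(P,P)=0$, so $P$ is a partial tilting $\Lambda_P$-module; because $\WW_P$ is the \emph{smallest} wide subcategory containing $P$, a count of simple objects should force $|P|$ to equal the number of simple $\Lambda_P$-modules, and a rigid module over a hereditary algebra with the maximal number of summands is tilting. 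Verifying this rank count (equivalently, that wideness is generated economically by $P$) is the delicate point.

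Granting part (1), parts (2) and (3) become statements about the tilting torsion class in $\WW_P$. The inclusion $\ccok P\subseteq\Fac_{\WW_P}P$ is immediate since a cokernel of a map in $\add P$ is a quotient of an object of $\add P$. For the reverse inclusion, given $X\in\Fac_{\WW_P}P$ I would choose a surjection $P_0\twoheadrightarrow X$ with $P_0\in\add P$ for which $\Hom_\Lambda(P,P_0)\to\Hom_\Lambda(P,X)$ is surjective; the long exact sequence then puts the kernel $K$ in $\Fac_{\WW_P}P$, while $K\hookrightarrow P_0$ gives $K\in\Sub_{\WW_P}P$, so $K\in\Fac_{\WW_P}P\cap\Sub_{\WW_P}P=\add P$ by Lemma~\ref{lem:add} applied to the hereditary algebra $\Lambda_P$. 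This produces a presentation exhibiting $X\in\ccok P$ and identifies $\ccok P$ with the tilting torsion class, proving (2). Part (3) then follows formally: $\ccok P$ is a torsion class in $\WW_P$, hence closed under extensions and under quotients taken in $\WW_P$, and since $\WW_P$ is wide, images and cokernels of maps between objects of $\ccok P$ are computed in $\WW_P$ and are quotients of objects of $\ccok P$, so they remain in $\ccok P$.

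For part (4) I would apply the Brenner--Butler theorem to the tilting $\Lambda_P$-module $P$, noting $\End_{\Lambda_P}(P)=\End_\Lambda(P)=\Gamma_P$ and that $F_P=\Hom_\Lambda(P,-)$ restricts on $\WW_P$ to the tilting functor $\Hom_{\Lambda_P}(P,-)$. This yields an exact equivalence from the torsion class $\Fac_{\WW_P}P=\ccok P$ onto the torsion-free class $\YY(P)=\{N\in\mod\Gamma_P:\Tor^{\Gamma_P}_1(N,P)=0\}$. To identify $\YY(P)=\Sub(DP)$, I would use that the injective $\Lambda_P$-modules lie in $\ccok P$ together with the adjunction isomorphism $\Hom_{\Lambda_P}(P,D\Lambda_P)\cong DP$ of $\Gamma_P$-modules: embedding any object of $\ccok P$ into an injective and applying the equivalence gives $\YY(P)\subseteq\Sub(DP)$, while $DP\in\YY(P)$ and closure under submodules gives the reverse inclusion. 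Finally, part (5) is independent of all of the above: for $X\in\ccok P$ take the short exact sequence $0\to P_1\to P_0\to X\to 0$ with $P_0,P_1\in\add P$ from Lemma~\ref{lem:cokp} and apply $F_P$; since $\Ext^1_\Lambda(P,P_1)=0$ by rigidity, this gives a short exact sequence $0\to F_PP_1\to F_PP_0\to F_PX\to 0$ with $F_PP_0,F_PP_1\in\proj\Gamma_P$, whence $\pd_{\Gamma_P}F_PX\le 1$.

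In summary, parts (2)--(5) are essentially bookkeeping once the tilting picture of part (1) is in place, the only genuinely non-formal inputs being Lemma~\ref{lem:add} and the standard identification $\YY(P)=\Sub(DP)$ in Brenner--Butler theory. The weight of the proof therefore rests on part (1), i.e.\ on the two facts that $\WW_P$ is a hereditary module category and that $P$ is tilting therein.
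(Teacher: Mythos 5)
Your overall architecture matches the paper's: the paper likewise deduces (4) from the Brenner--Butler theorem (Proposition \ref{prop:tilt-comp-BB} (2)) under the equivalence of (1), and proves (5) by applying $F_P$ to the presentation $0 \to P_1 \to P_0 \to X \to 0$ from Lemma \ref{lem:cokp} (your variant, using $\Ext^1_\Lambda(P,P_1)=0$ directly instead of exactness of $F_P$ on $\ccok P$, is a harmless and slightly more self-contained version, as it decouples (5) from (4)). Your torsion-theoretic arguments for (2) and (3), including the use of $\Gen T = T^{\perp_1}$ for tilting $T$ and of Lemma \ref{lem:add} inside $\mod\Lambda_P$, are correct \emph{granting} (1); the paper instead cites \cite[Proposition 5.1, Corollary 5.2]{Eno} for all of (1)--(3).

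The genuine gap is in part (1), which you yourself identify as carrying the weight of the proof. The ``structural fact'' you invoke --- that every wide subcategory of $\mod\Lambda$ for hereditary $\Lambda$ is equivalent to $\mod\Lambda_P$ for a hereditary artin algebra $\Lambda_P$ --- is false: over the Kronecker algebra the regular modules form a wide subcategory with infinitely many simple objects and no projective generator, so it is not equivalent to the module category of any artin algebra. The correct general statement (due to Ingalls--Thomas) concerns \emph{functorially finite} wide subcategories, and verifying that $\WW_P$ is of this form for rigid $P$ is precisely the nontrivial content that the paper outsources to \cite{Eno}. Likewise, your rank count is only asserted (``should force $|P|$ to equal the number of simple $\Lambda_P$-modules''); the standard way to prove it uses Geigle--Lenzing perpendicular categories: for rigid $P$ the category $P^{\perp} = \{X : \Hom(P,X)=0=\Ext^1(P,X)\}$ is wide and equivalent to a module category with $|\Lambda_P|-|P|$ simples, so if $|P| < |\Lambda_P|$ then ${}^{\perp}(P^{\perp})$ would be a \emph{proper} wide subcategory containing $P$, contradicting the minimality of $\WW_P$. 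Since the general fact you lean on is false as stated and the count is left unverified, part (1) --- and with it the entire reduction --- remains unestablished in your proposal; everything downstream is correct bookkeeping, exactly as you say, but the foundation needs either the perpendicular-category argument sketched above or the citation the paper uses.
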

\begin{proof}
  (1)--(3) These follow from \cite[Proposition 5.1, Corollary 5.2]{Eno}.

  (4)
  Under the equivalence in (1), this follows from the Brenner--Butler theorem (Proposition \ref{prop:tilt-comp-BB} (2)).

  (5)
  Lemma \ref{lem:cokp} implies that there is a short exact sequence
  \[
    \begin{tikzcd}
      0 \rar & P_1 \rar & P_0 \rar & X \rar & 0
    \end{tikzcd}
  \]
  in $\mod\Lambda$ with $P_0, P_1 \in \add P$. Since this is a short exact sequence in $\ccok P$ and $F_P$ is exact on $\ccok P$, by applying $F_P$, we obtain the following short exact sequence in $\mod\Gamma_P$:
  \[
    \begin{tikzcd}
      0 \rar & F_P P_1 \rar & F_P P_0 \rar & F_P X \rar & 0.
    \end{tikzcd}
  \]
  Since $F_P P = \Gamma$, we have $F_P P_0, F_P P_1 \in \proj\Gamma$. Therefore, $\pd_{\Gamma_P} F_P X \leq 1$ holds.
\end{proof}

Using this, we can obtain the following relation between twin rigid modules $(P,I)$ and tilting $\Gamma_P$-modules.

\begin{proposition}\label{prop:tilting}
  The following statements hold.
  \begin{enumerate}
    \item Let $I \in \mod\Lambda$. Then $(P,I)$ is a twin rigid module if and only if $I \in \ccok P$ holds and $F_P I$ is a tilting $\Gamma_P$-module. In particular, $|P| = |I|$ holds in this case.
    \item The exact equivalence $F_P \colon \ccok P \equi \Sub (DP)$ gives a bijection between the following sets up to isomorphism.
    \begin{enumerate}
      \item The set of $\Lambda$-modules $I$ such that $(P,I)$ is twin rigid.
      \item The set of tilting $\Gamma_P$-modules $T$ contained in $\Sub(DP)$.
    \end{enumerate}
  \end{enumerate}
\end{proposition}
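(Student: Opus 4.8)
The plan is to prove (1) by matching the two defining conditions of a twin rigid module against the three defining conditions of a tilting $\Gamma_P$-module, transporting sequences and $\Ext$-groups back and forth through the exact equivalence $F_P \colon \ccok P \equi \Sub(DP)$ of Proposition~\ref{prop:ice}(4); part (2) then follows formally. Throughout I would use that $F_P P = \Gamma_P$, that $P \in \ccok P$, and that $\add I \subseteq \ccok P$ whenever $I \in \ccok P$ (since $\ccok P$ is a torsion class in $\WW_P$ by Proposition~\ref{prop:ice}(2), hence closed under summands and finite direct sums).

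For the forward direction of (1), suppose $(P,I)$ is twin rigid. The sequence \eqref{eq:twin-rigid-2} together with Lemma~\ref{lem:cokp} gives $I \in \ccok P$ at once. To see that $F_P I$ is tilting I would verify the three conditions: $\pd_{\Gamma_P} F_P I \le 1$ is exactly Proposition~\ref{prop:ice}(5); the vanishing $\Ext^1_{\Gamma_P}(F_P I, F_P I) = 0$ comes from rigidity of $I$ via the compatibility discussed below; and for the generating sequence I note that all terms of \eqref{eq:twin-rigid-1} lie in $\ccok P$, so \eqref{eq:twin-rigid-1} is a conflation there, and applying the exact functor $F_P$ together with $F_P P = \Gamma_P$ yields a short exact sequence $0 \to \Gamma_P \to F_P I^0 \to F_P I^1 \to 0$ with $F_P I^0, F_P I^1 \in \add F_P I$.

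Conversely, if $I \in \ccok P$ then Lemma~\ref{lem:cokp} already supplies \eqref{eq:twin-rigid-2}, and if $F_P I$ is tilting I recover \eqref{eq:twin-rigid-1} by transporting the tilting sequence $0 \to \Gamma_P \to T^0 \to T^1 \to 0$ backwards: its terms lie in $\Sub(DP)$, so it is a conflation that the inverse of $F_P$ carries to a short exact sequence $0 \to P \to I^0 \to I^1 \to 0$ in $\mod\Lambda$ with $I^0, I^1 \in \add I$; rigidity of $I$ again comes from the compatibility below, while $P$ is rigid by hypothesis. The equality $|P| = |I|$ then follows because $F_P$ is an equivalence (so $|I| = |F_P I|$), a tilting $\Gamma_P$-module has as many indecomposable summands as $\Gamma_P$ has simples, and $|\Gamma_P| = |P|$.

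The step I expect to be the main obstacle, and the one I would isolate carefully, is the passage of $\Ext^1$ and of short exact sequences across the two boundaries $\ccok P \subseteq \mod\Lambda$ and $\Sub(DP) \subseteq \mod\Gamma_P$. Since $\WW_P$ is wide, its embedding into $\mod\Lambda$ is exact and short exact sequences in $\WW_P$ are genuine short exact sequences in $\mod\Lambda$; since $\ccok P$ and $\Sub(DP)$ are extension-closed (a torsion class, resp.\ a torsion-free class) in their ambient abelian categories, the Yoneda $\Ext^1$ computed in each exact category agrees with the ambient $\Ext^1$; and the exact equivalence $F_P$ preserves conflations, hence $\Ext^1$. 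Combining these gives
\[
\Ext_\Lambda^1(I,I) \cong \Ext^1_{\ccok P}(I,I) \cong \Ext^1_{\Sub(DP)}(F_P I, F_P I) \cong \Ext_{\Gamma_P}^1(F_P I, F_P I),
\]
so that $I$ is rigid precisely when $\Ext_{\Gamma_P}^1(F_P I, F_P I) = 0$. Finally, for (2) it suffices to observe that $F_P$ is a bijection up to isomorphism between the objects of $\ccok P$ and those of $\Sub(DP)$; part (1) then says exactly that it restricts to a bijection between the modules $I$ with $(P,I)$ twin rigid and the tilting $\Gamma_P$-modules contained in $\Sub(DP)$, every such tilting module being of the form $F_P I$ for a unique $I \in \ccok P$.
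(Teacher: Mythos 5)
Your proof is correct and takes essentially the same route as the paper's: both transport the defining conditions of a twin rigid module across the exact equivalence $F_P \colon \ccok P \equi \Sub(DP)$ of Proposition~\ref{prop:ice}(4), using Lemma~\ref{lem:cokp} for the sequence \eqref{eq:twin-rigid-2}, Proposition~\ref{prop:ice}(5) for $\pd_{\Gamma_P} F_P I \leq 1$, and the count $|P| = |\Gamma_P| = |F_P I| = |I|$. The only difference is that you spell out the transfer of $\Ext^1$ across the extension-closed subcategories $\ccok P \subseteq \mod\Lambda$ and $\Sub(DP) \subseteq \mod\Gamma_P$, a point the paper leaves implicit.
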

\begin{proof}
  (1)
  Recall that we have an exact equivalence $F_P \colon \ccok P \equi \Sub (DP) \subseteq \mod\Gamma_P$ by Proposition \ref{prop:ice} (4), which restricts to an equivalence $\add P \equi \proj \Gamma_P$ with $F_P P = \Gamma_P$. Therefore, the following statements hold for $I \in \ccok P$.
  \begin{itemize}
    \item $I$ is rigid if and only if $F_P I$ is rigid.
    \item There exists a short exact sequence of the form \eqref{eq:twin-rigid-1} with $I^0, I^1 \in \add I$ if and only if there exists a short exact sequence of the following form in $\mod\Gamma_P$:
      \[
      \begin{tikzcd}
        0 \rar &  \Gamma_P \rar & F_P I^{0} \rar & F_P I^{1} \rar & 0.
      \end{tikzcd}
      \]
  \end{itemize}
  Also, for every $I \in \ccok P$, we have $\pd_{\Gamma_P} F_P I \leq 1$ by Proposition \ref{prop:ice} (5), and a short exact sequence of the form \eqref{eq:twin-rigid-2} always exists. Hence (1) follows from the above facts.
  Moreover, in this case, we have $|P| = |\Gamma_P| = |F_P I| = |I|$ holds since $F_P \colon \ccok P \equi \Sub (DP)$ is an equivalence and $F_P I$ is a tilting $\Gamma_P$-module, see Proposition \ref{prop:tilt-comp-BB} (1).

  (2) By (1), we obtain a map $I \mapsto F_P I$ from (a) to (b). Conversely, for $T$ in (b), there is some $I \in \ccok P$ with $FI \iso T$ by the equivalence $F_P \colon \ccok P \equi \Sub (DP)$. Then (1) implies that $(P,I)$ is twin rigid.
\end{proof}
Using this, we can deduce various properties of twin rigid modules from those of tilting modules.
First, we have the following analogue of the Bongartz completion.
\begin{theorem}\label{thm:completion}
  Let $P$ be a rigid $\Lambda$-modules. Suppose that $I$ is rigid and $I \in \ccok P$. Then there exists $I'$ such that $(P,I\oplus I')$ is a twin rigid module.
\end{theorem}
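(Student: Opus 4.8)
The plan is to transport the problem to the endomorphism algebra $\Gamma_P$ via the equivalence $F_P$ and then invoke the classical Bongartz completion. By Proposition~\ref{prop:tilting}, a pair $(P,J)$ with $J \in \ccok P$ is twin rigid precisely when $F_P J$ is a tilting $\Gamma_P$-module, and the equivalence $F_P \colon \ccok P \equi \Sub(DP)$ restricts to a bijection between the modules $J$ with $(P,J)$ twin rigid and the tilting $\Gamma_P$-modules lying in $\Sub(DP)$. Thus it suffices to produce a tilting $\Gamma_P$-module $T \in \Sub(DP)$ having $F_P I$ as a direct summand: its preimage $F_P^{-1}(T)$ will then be of the form $I \oplus I'$ with $(P, I \oplus I')$ twin rigid.

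First I would check that $F_P I$ is a partial tilting $\Gamma_P$-module. Since $I$ is rigid, so is $F_P I$ (this equivalence of rigidity is already recorded in the proof of Proposition~\ref{prop:tilting}), and $\pd_{\Gamma_P} F_P I \le 1$ by Proposition~\ref{prop:ice}~(5). Hence the classical Bongartz completion (see Appendix~\ref{sec:app}) applies and yields a short exact sequence
\[ 0 \to \Gamma_P \to E \to (F_P I)^{\oplus d} \to 0 \]
in $\mod\Gamma_P$ such that $T := F_P I \oplus E$ is a tilting $\Gamma_P$-module.

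It remains to verify that this Bongartz completion stays inside $\Sub(DP)$, which I expect to be the crux of the argument. The key observation is that both outer terms of the displayed sequence lie in $\Sub(DP)$: we have $F_P I \in \Sub(DP)$ because $I \in \ccok P$, and $\Gamma_P = F_P P \in \Sub(DP)$ because $P \in \ccok P$. Now $\Sub(DP)$ is a torsion-free class in $\mod\Gamma_P$ by Proposition~\ref{prop:ice}~(4), hence closed under extensions, so $E \in \Sub(DP)$ and therefore $T = F_P I \oplus E \in \Sub(DP)$. This is exactly where the hereditary hypothesis does its work, through Proposition~\ref{prop:ice}: it guarantees both that $F_P I$ has projective dimension at most one (so that Bongartz applies) and that $\Sub(DP)$ is extension-closed (so that the complement cannot escape it). With $T$ in hand, Proposition~\ref{prop:tilting}~(2) produces $J = F_P^{-1}(T) \in \ccok P$ with $(P, J)$ twin rigid, and since $F_P I$ is a direct summand of $T$ and $F_P$ is an equivalence, $I$ is a direct summand of $J$; writing $J = I \oplus I'$ completes the proof.
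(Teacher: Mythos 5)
Your proof is correct and matches the paper's argument essentially verbatim: both pass to $\Gamma_P$ via $F_P$, verify that $F_P I$ is partial tilting using Proposition~\ref{prop:ice}~(5), apply the Bongartz completion, use extension-closedness of the torsion-free class $\Sub(DP)$ on the sequence $0 \to \Gamma_P \to E \to M_0 \to 0$ to keep the complement inside $\Sub(DP)$, and pull back through Proposition~\ref{prop:tilting}. The only cosmetic difference is that Bongartz gives the cokernel $M_0 \in \add(F_P I)$ rather than a direct power $(F_P I)^{\oplus d}$, which changes nothing since $\Sub(DP)$ is closed under direct summands.
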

\begin{proof}
  An exact equivalence $F_P \colon \ccok P \equi \Sub (DP)$ implies that $F_P I$ is a rigid $\Gamma_P$-module. Moreover, we have $\pd_{\Gamma_P} F_P I \leq 1$ by Proposition \ref{prop:ice} (5). Therefore, $F_P I$ is a partial tilting $\Gamma_P$-module.
  Therefore, Proposition \ref{prop:tilt-comp-BB} (1) implies that there is some $E \in \mod \Gamma_P$ such that $F_P I \oplus E$ is a tilting $\Gamma_P$-module and that there is a short exact sequence
  \[
    \begin{tikzcd}
      0 \rar & \Gamma_P \rar & E \rar & M_0 \rar & 0
    \end{tikzcd}
  \]
  with $M_0 \in \add (F_P I)$.
  Then since $F_P I$ and $\Gamma_P$ belong to $\Sub (DP)$ and $\Sub(DP)$ is extension-closed, we have $E \in \Sub(DP)$, and hence $F_P I \oplus E$ is a tilting $\Gamma_P$-module contained in $\Sub (DP)$.
  Therefore, Proposition \ref{prop:tilting} implies that there is some $I' \in \ccok P$ such that $(P, I \oplus I')$ is twin rigid.
\end{proof}

As a corollary, we have the following numerical criterion of twin rigid modules.
\begin{proposition}\label{prop:rigidnumber}
  Let $P$ be a rigid $\Lambda$-module and $I \in \mod\Lambda$. Then $(P,I)$ is a twin rigid module if and only if the following conditions are satisfied:
  \begin{enumerate}
    \item $I$ is rigid.
    \item $I \in \ccok P$ holds, or equivalently, there exists a short exact sequence
    \[
    \begin{tikzcd}
      0 \rar & P_1 \rar & P_0 \rar & I \rar & 0.
    \end{tikzcd}
    \]
    \item $|I| = |P|$ holds, or equivalently, $|I| \geq |P|$ holds.
  \end{enumerate}
\end{proposition}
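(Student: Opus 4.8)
\subsection*{Proof proposal}

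The plan is to route everything through Proposition~\ref{prop:tilting}, which already reformulates ``$(P,I)$ is twin rigid'' as ``$I \in \ccok P$ and $F_P I$ is a tilting $\Gamma_P$-module'', and then to use the Bongartz-type completion of Theorem~\ref{thm:completion} to promote the numerical inequality in (3) to genuine tilting-ness. Since the three conditions and the notion of a twin rigid module all depend only on $\add P$ and $\add I$, I will first reduce to the case where $I$ is basic, so that $|I|$ really counts the indecomposable summands of $I$. The equivalence of the two formulations in (2) is nothing but Lemma~\ref{lem:cokp}, so I may freely work with a short exact sequence of the form \eqref{eq:twin-rigid-2}.

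The ``only if'' direction is immediate from what is already established: if $(P,I)$ is twin rigid, then $I$ is rigid by Definition~\ref{def:twin-rigid}(1), giving (1); Proposition~\ref{prop:tilting}(1) gives both $I \in \ccok P$, i.e.\ (2), and $|I| = |P|$, i.e.\ (3). For the converse, assume (1), (2), and the inequality $|I| \geq |P|$. By (1) and (2), $I$ is rigid and lies in $\ccok P$, so Theorem~\ref{thm:completion} supplies a module $I'$ with $(P, I \oplus I')$ twin rigid. Applying Proposition~\ref{prop:tilting}(1) to this pair yields $|I \oplus I'| = |P|$. As $I$ is a direct summand of $I \oplus I'$, the chain $|P| \leq |I| \leq |I \oplus I'| = |P|$ forces $|I| = |P| = |I \oplus I'|$; in particular this already shows that the inequality $|I| \geq |P|$ upgrades to the equality $|I| = |P|$, establishing the ``equivalently'' clause of (3). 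Finally, since $I$ is basic and $|I| = |I \oplus I'|$, every indecomposable summand of $I'$ is a summand of $I$, so $\add(I \oplus I') = \add I$, and hence $(P, I)$ is itself twin rigid.

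I expect the main obstacle to be the bookkeeping in the final step rather than any deep point: one must verify carefully that $\add(I \oplus I') = \add I$ genuinely follows from the equality of summand counts (note $I \oplus I'$ need not be basic) and, crucially, that twin rigidity is an $\add$-invariant of the second entry, so that discarding the extra summand $I'$ from the completed pair returns a valid twin rigid module $(P,I)$. Everything else is a direct appeal to the already-proved equivalence in Proposition~\ref{prop:tilting} and to the completion in Theorem~\ref{thm:completion}.
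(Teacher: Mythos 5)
Your proof is correct and follows essentially the same route as the paper's: the ``only if'' direction via Definition~\ref{def:twin-rigid} and Proposition~\ref{prop:tilting}~(1), and the converse via Theorem~\ref{thm:completion} followed by the counting chain $|P| \leq |I| \leq |I \oplus I'| = |P|$ to conclude $I' \in \add I$. Your extra care about reducing to basic $I$ and about $\add$-invariance of twin rigidity is harmless but not needed, since $|M|$ already counts non-isomorphic indecomposable summands and the second exchange sequence for $(P,I)$ comes directly from hypothesis~(2) via Lemma~\ref{lem:cokp}.
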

\begin{proof}
  Suppose that $(P,I)$ is twin rigid. Then clearly (1) and (2) hold by definition, and (3) follows from Proposition~\ref{prop:tilting} (1).

  Conversely, suppose that $I$ satisfies (1)--(3).
  Then by Theorem \ref{thm:completion}, there is some $I' \in \mod\Lambda$ such that $(P,I \oplus I')$ is twin rigid. Thus we have $|I \oplus I'| \geq |I| \geq |P|  = |I \oplus I'|$, where the last equality follows from Proposition \ref{prop:tilting} (1). Thus we obtain $|I \oplus I'| = |I|$, which implies $I' \in \add I$. Therefore, $(P,I)$ is twin rigid.
\end{proof}
Also we have the following restriction on the number of ``complements'' of twin rigid modules for a fixed $P$.
\begin{proposition}\label{prop:twinatmosttwo}
Let $M\in\ccok P$. If $|M|=|P|-1$ holds, then there exist at most two non-isomorphic indecomposable $\Lambda$-modules $X$ such that $(P,M \oplus X)$ is a twin rigid module.
\end{proposition}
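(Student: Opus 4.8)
The plan is to transport the question into the tilting theory of $\Gamma_P$ by means of Proposition~\ref{prop:tilting}. If there is no indecomposable $X$ with $(P, M \oplus X)$ twin rigid, the count is $0 \le 2$ and we are done, so I may assume that at least one such complement $X_0$ exists. Then $M \oplus X_0$ is rigid, hence $M$ itself is rigid. Since $M \in \ccok P$, Proposition~\ref{prop:ice}~(5) gives $\pd_{\Gamma_P} F_P M \le 1$, and the exact equivalence $F_P \colon \ccok P \equi \Sub(DP)$ shows that $F_P M$ is rigid; thus $F_P M$ is a partial tilting $\Gamma_P$-module. Because $F_P$ preserves the number of non-isomorphic indecomposable summands, we have $|F_P M| = |M| = |P| - 1 = |\Gamma_P| - 1$, so that $F_P M$ is an \emph{almost complete} partial tilting $\Gamma_P$-module.

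Next I would set up the correspondence of complements. For any admissible $X$ one has $X \notin \add M$: otherwise $|M \oplus X| = |M| = |P|-1$, contradicting $|M \oplus X| = |P|$ from Proposition~\ref{prop:tilting}~(1). By Proposition~\ref{prop:tilting}, $(P, M \oplus X)$ is twin rigid if and only if $F_P(M \oplus X) = F_P M \oplus F_P X$ is a tilting $\Gamma_P$-module lying in $\Sub(DP)$. Since $F_P$ is an equivalence, $X \mapsto F_P X$ is a bijection between indecomposable objects of $\ccok P$ and of $\Sub(DP)$ that preserves and reflects isomorphism, and the condition $X \notin \add M$ translates to $F_P X \notin \add F_P M$. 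Consequently, the map $X \mapsto F_P X$ injects the set of indecomposable $X$ with $(P, M \oplus X)$ twin rigid into the set of indecomposable complements of the almost complete partial tilting module $F_P M$, i.e. the indecomposable $Y \notin \add F_P M$ with $F_P M \oplus Y$ a tilting module.

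Finally I would invoke the classical fact that an almost complete partial tilting module admits at most two non-isomorphic indecomposable complements (Riedtmann--Schofield \cite{RS}; see Appendix~\ref{sec:app}). This bounds the number of such $Y$ by two, hence the number of admissible $X$ by two, which is the claim. The step I expect to require the most care is the bookkeeping in the second paragraph: checking that the conditions defining a twin rigid complement are exactly those preserved by $F_P$, and that imposing the extra constraint $F_P M \oplus Y \in \Sub(DP)$ can only discard complements rather than produce new ones, so that the classical at-most-two bound is not violated. Once $F_P M$ is identified as an almost complete partial tilting module, the remainder is an immediate application of that bound.
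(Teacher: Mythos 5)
Your proposal is correct and takes essentially the same route as the paper: the paper's proof is a one-line citation of Proposition~\ref{prop:tilting} together with Theorem~\ref{thm:exists-exchange-seq}~(1), i.e.\ exactly your transport along $F_P$ to the almost complete tilting $\Gamma_P$-module $F_P M$ followed by the Riedtmann--Schofield at-most-two-complements bound. The extra bookkeeping you supply (rigidity of $M$, the check $X \notin \add M$, injectivity of $X \mapsto F_P X$ on complements, and the remark that restricting to $\Sub(DP)$ only discards complements) is sound and merely fills in what the paper leaves implicit.
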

\begin{proof}
This follows easily from Proposition~\ref{prop:tilting} and Theorem~\ref{thm:exists-exchange-seq} (1).
\end{proof}

\begin{remark}
  In \cite{AS2}, Auslander and Smal{\o} raised the following questions: for an artin algebra $\Lambda$, does every functorially finite subcategory $\CC$ of $\mod\Lambda$ which is closed under extensions have an $\Ext$-progenerator $P$ and an $\Ext$-injective cogenerator $I$? Also, does the equality $|P|=|I|$ holds? It is well-known that these questions are true when $\CC$ is either a torsion class, a torsion-free class, or a wide subcategory. Theorem~\ref{thm:main} and Proposition \ref{prop:tilting} (1) imply that the questions are true when $\CC$ is an IE-closed subcategory and $\Lambda$ is hereditary.
\end{remark}

\subsection{Mutation of twin rigid modules}

In this subsection, we introduce \emph{mutation} of twin rigid modules, which enables us to calculate all twin rigid modules when $\Lambda$ is of finite representation type. We begin with introducing a quiver consisting of twin rigid modules, which is inspired by the quiver of tilting modules (see Theorem~\ref{thm:hasse}).
Recall that we have fixed a rigid module $P$ over a hereditary algebra $\Lambda$.
In addition, \emph{we assume that $P$ is basic} in this subsection.

\begin{definition}
We define the quiver $\KK_P$ as follows.
\begin{itemize}
  \item The vertex set of $\KK_P$ is the set of isomorphism classes of basic twin rigid modules $(P,I)$. 
  \item For $(P,I_1), (P,I_2)\in\KK_P$, we draw an arrow $(P,I_1)\to (P,I_2)$ in $\KK_P$ if we have decompositions $I_1=X\oplus M$ and $I_2=Y\oplus M$ with $X$ and $Y$ indecomposable such that there is a short exact sequence
    \begin{equation}\label{eq:exchange-twin}
      \begin{tikzcd}
        0 \rar & X \rar & M' \rar & Y \rar & 0,
      \end{tikzcd}
    \end{equation}
with $M'\in\add M$. Note that $I_1 \not \iso I_2$ holds in this case, because otherwise the above sequence would split.
\end{itemize}
We call the sequence \eqref{eq:exchange-twin} an \emph{exchange sequence} of an arrow $(P,I_1)\to (P,I_2)$.
\end{definition}

An arrow in $\KK_P$ induces a minimal inclusion of the corresponding IE-closed subcategories.

\begin{proposition}
Let $(P,I_1)\to (P,I_2)$ be an arrow in $\KK_P$. Set $\CC_i=\Fac P\cap\Sub I_i$ for $i=1,2$. Then $\CC_1\subsetneq\CC_2$ holds and there is no functorially finite IE-closed subcategory $\CC$ which satisfies $\CC_1\subsetneq\CC\subsetneq\CC_2$.
\end{proposition}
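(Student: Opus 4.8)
The plan is to split the statement into two halves: the strict inclusion $\CC_1 \subsetneq \CC_2$, and the covering property. For the first half I argue directly with $\Sub(-)$, while for the second I reduce, via Theorem~\ref{thm:main}, to the question of how many complements a fixed almost complete tilting module can have, so that Proposition~\ref{prop:twinatmosttwo} finishes the job. Concretely, for the strict inclusion write $I_1 = X \oplus M$ and $I_2 = Y \oplus M$ as in the definition of the arrow, with exchange sequence $0 \to X \to M' \to Y \to 0$ and $M' \in \add M$. Since $X$ embeds into $M' \in \add M \subseteq \add I_2$, we have $X \in \Sub I_2$, hence $I_1 = X \oplus M \in \Sub I_2$ and so $\Sub I_1 \subseteq \Sub I_2$; intersecting with $\Fac P$ gives $\CC_1 \subseteq \CC_2$. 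The inclusion is strict because $\CC_1 = \CC_2$ would force $I_1 = \I(\CC_1) = \I(\CC_2) = I_2$ by the bijection of Theorem~\ref{thm:main} together with $\I(\CC_i) = I_i$ from Proposition~\ref{prop:pair_eic}, contradicting $I_1 \not\iso I_2$.

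For the covering property, suppose $\CC$ is a functorially finite IE-closed subcategory with $\CC_1 \subseteq \CC \subseteq \CC_2$. The first key step is to show that $\CC$ has the same $\Ext$-progenerator $P$, so that by Theorem~\ref{thm:main} it corresponds to a basic twin rigid module $(P,J)$ with $J = \I(\CC)$ and $\CC = \Fac P \cap \Sub J$. Applying $\TTT(-)$ to the chain and using $\TTT(\CC_i) = \Fac P$ (which holds since $\P(\CC_i) = P$, cf. the proof of Lemma~\ref{lem:cover_general}) gives $\TTT(\CC) = \Fac P$. Moreover $P \in \CC_1 \subseteq \CC$ is $\Ext$-projective in $\CC$ because $\Ext_\Lambda^1(P,\CC) \subseteq \Ext_\Lambda^1(P, \Fac P) = 0$, so $P$ is a summand of $Q := \P(\CC)$. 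Conversely, $Q \in \CC \subseteq \Sub I_2$ yields a monomorphism $Q \hookrightarrow I_2^n$, which I lift through the sequence~\eqref{eq:twin-rigid-2} for $(P, I_2)$ to a monomorphism $Q \hookrightarrow P_0^n$ (using $\Ext^1_\Lambda(Q, P_1^n) = 0$ since $P_1 \in \add P \subseteq \CC$ and $Q$ is $\Ext$-projective in $\CC$); then $Q \in \Fac P \cap \Sub P = \add P$ by Lemma~\ref{lem:add}. Hence $\P(\CC) = P$.

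The second, decisive step is to show that $M$ is a direct summand of $J$. Since $M \in \CC$ (it is a summand of $I_1 \in \CC_1 \subseteq \CC$) and $M$ is a summand of $I_2 = \I(\CC_2)$, the inclusion $\CC \subseteq \CC_2$ gives $\Ext_\Lambda^1(\CC, M) \subseteq \Ext_\Lambda^1(\CC_2, M) = 0$, so $M$ is $\Ext$-injective in $\CC$ and therefore $M \in \add \I(\CC) = \add J$. As $(P,J)$ is basic twin rigid we have $|J| = |P| = |M| + 1$, so $J = M \oplus Z$ with $Z$ indecomposable. Now $(P, M \oplus X)$, $(P, M \oplus Y)$ and $(P, M \oplus Z)$ are all twin rigid with $X, Y, Z$ indecomposable and $|M| = |P| - 1$; by Proposition~\ref{prop:twinatmosttwo} there are at most two such indecomposable complements, and since $X \not\iso Y$ we conclude $Z \in \{X, Y\}$, i.e. $J \in \{I_1, I_2\}$ and $\CC \in \{\CC_1, \CC_2\}$, contradicting $\CC_1 \subsetneq \CC \subsetneq \CC_2$.

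I expect the main obstacle to be the reduction in the second paragraph, namely controlling the $\Ext$-progenerator of the intermediate subcategory: the point $\P(\CC) = P$ is exactly what forces $\CC$ to be a vertex of $\KK_P$ and hence to fall under Proposition~\ref{prop:twinatmosttwo}. This lifting argument is modeled on the proof of Proposition~\ref{prop:pair_eic}(3) and uses hereditariness of $\Lambda$ crucially through Lemma~\ref{lem:add}; once it is in place, the remainder is a clean application of the bound on the number of complements, which is the whole reason Proposition~\ref{prop:twinatmosttwo} was recorded beforehand.
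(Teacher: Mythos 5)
Your proof is correct and takes essentially the same route as the paper: strict inclusion via the exchange sequence plus the bijection of Theorem~\ref{thm:main}, and for an intermediate $\CC$ the reduction to $\P(\CC)=P$ and $M\in\add\I(\CC)$, finished off by Proposition~\ref{prop:twinatmosttwo}. The only cosmetic difference is that the paper obtains $\P(\CC)=P$ more quickly from $\CC\subseteq\CC_2\subseteq\ccok P$ and Lemma~\ref{lem:cokp}, whereas you redo the approximation-lifting argument of Proposition~\ref{prop:pair_eic}(3); your detour through $\TTT(\CC)=\Fac P$ is harmless but unnecessary, since $\CC\subseteq\CC_2\subseteq\Fac P$ already suffices.
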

\begin{proof}
By definition, we have decompositions $I_1=X\oplus M$ and $I_2=Y\oplus M$ and an exchange sequence \eqref{eq:exchange-twin}, which implies $X \in \Sub M \subseteq \Sub I_2$. Thus $\Sub I_1\subseteq\Sub I_2$ holds, so we have $\CC_1\subseteq\CC_2$.
By Theorem~\ref{thm:a}, we have $\CC_1\subsetneq\CC_2$.

Suppose that there exists a functorially finite IE-closed subcategory $\CC$ of $\mod\Lambda$ with $\CC_1\subseteq \CC\subseteq \CC_2$.
Since $P,M\in\CC_1$, we have $P,M\in\CC$. Since $P$ is $\Ext$-projective and $M$ is $\Ext$-injective in $\CC_2$, they are so in $\CC$. Moreover, since $P$ is an $\Ext$-progenerator of $\CC_2$, we have  $\CC \subseteq \CC_2 \subseteq \ccok P$. Thus Lemma \ref{lem:cokp} shows that $\CC$ has an $\Ext$-progenerator $P$, so $\P(\CC) = P$. In addition, $\I(\CC)$ contains $M$ as a direct summand. Since $|M| = |P| - 1$, Proposition~\ref{prop:twinatmosttwo} shows that a twin rigid module $(P,\I(\CC))$ coincides with either $(P,I_1)$ or $(P,I_2)$. Thus Theorem~\ref{thm:a} implies $\CC = \CC_1$ or $\CC = \CC_2$.
\end{proof}
\begin{remark}
  If there is a tilting exchange sequence from $T_1$ to $T_2$, then we have the minimal inclusion of torsion classes $\Fac T_1 \supsetneq \Fac T_2$ (Theorem \ref{thm:hasse}), and this inclusion is opposite to the above one. This is because we consider $\Sub I$ and $\Sub I'$, not $\Fac I$ and $\Fac I'$.
\end{remark}

We define mutation of twin rigid modules in Definition~\ref{def:mutation}, which gives a way to obtain a new twin rigid module from the old one, which is an analogue of (left) mutation of tilting modules (Proposition~\ref{prop:tilt-mutation}). A mutation of a twin rigid module $(P,I)$ is also a twin rigid module and gives an adjacent vertex in $\KK_P$:

\begin{proposition}\label{prop:leftmutation}
Let $(P,X\oplus M)$ be a basic twin rigid module with $X$ indecomposable. If a minimal left $\add M$-approximation $f\colon X\to M'$ is a monomorphism, then the following hold.
\begin{enumerate}
  \item $(P, \Cokernel f\oplus M)$ is a twin rigid module. 
  \item There is an arrow $(P,X\oplus M)\to (P,\Cokernel f\oplus M)$ in $\KK_P$.
\end{enumerate}
\end{proposition}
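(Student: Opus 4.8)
The plan is to transport everything through the exact equivalence $F_P \colon \ccok P \equi \Sub(DP)$ of Proposition~\ref{prop:ice}~(4) and reduce both assertions to the classical left mutation of tilting modules (Proposition~\ref{prop:tilt-mutation}), in the same spirit in which Proposition~\ref{prop:tilting} translates twin rigid modules into tilting $\Gamma_P$-modules.

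First I would record the basic input. Since $(P, X \oplus M)$ is twin rigid, Proposition~\ref{prop:tilting}~(1) gives $X \oplus M \in \ccok P$ and tells us that $F_P(X \oplus M) = F_P X \oplus F_P M$ is a basic tilting $\Gamma_P$-module lying in $\Sub(DP)$. Because $X, M' \in \ccok P$ and $\ccok P$ is closed under cokernels (Proposition~\ref{prop:ice}~(3)), the monomorphism $f \colon X \to M'$ has $\coker f \in \ccok P$, so the short exact sequence $0 \to X \xrightarrow{f} M' \to \coker f \to 0$ is a conflation in $\ccok P$. Applying the exact functor $F_P$ yields a short exact sequence $0 \to F_P X \xrightarrow{F_P f} F_P M' \to F_P(\coker f) \to 0$ in $\mod\Gamma_P$; in particular $F_P f$ is a monomorphism and $\coker(F_P f) \iso F_P(\coker f)$.

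The key point I must verify is that $F_P f$ is a minimal left $\add(F_P M)$-approximation of $F_P X$ in $\mod\Gamma_P$. Since $F_P$ is an equivalence restricting to $\add M \equi \add(F_P M)$, it carries the minimal left $\add M$-approximation $f$ to a minimal left $\add(F_P M)$-approximation of $F_P X$ \emph{computed inside} $\Sub(DP)$. This is the delicate step: I would observe that, because $\add(F_P M) \subseteq \Sub(DP)$ and $\Sub(DP)$ is a full subcategory of $\mod\Gamma_P$, the factorization property against objects of $\add(F_P M)$ is tested by the same Hom-sets whether computed in $\Sub(DP)$ or in all of $\mod\Gamma_P$; hence $F_P f$ is genuinely a minimal left $\add(F_P M)$-approximation in $\mod\Gamma_P$. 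I expect this identification to be the only real subtlety in the argument.

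With this in hand, part (1) follows quickly: $F_P f$ is injective, so Proposition~\ref{prop:tilt-mutation} applies to the basic tilting module $F_P X \oplus F_P M$ and shows that $F_P M \oplus \coker(F_P f) = F_P(M \oplus \coker f)$ is again a basic tilting $\Gamma_P$-module. As $M \oplus \coker f \in \ccok P$ and its image under $F_P$ is tilting and lies in $\Sub(DP)$, Proposition~\ref{prop:tilting} gives that $(P, M \oplus \coker f)$ is a twin rigid module. For part (2), Proposition~\ref{prop:tilt-mutation} also guarantees that $\coker(F_P f)$ is indecomposable and does not lie in $\add(F_P M)$; reflecting through the equivalence $F_P$, the module $\coker f$ is indecomposable and $\coker f \notin \add M$, so $(P, M \oplus \coker f)$ is a \emph{basic} twin rigid module and hence a vertex of $\KK_P$. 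Finally, the conflation $0 \to X \xrightarrow{f} M' \to \coker f \to 0$ with $M' \in \add M$ is precisely an exchange sequence for the decompositions $X \oplus M$ and $\coker f \oplus M$, which produces the desired arrow $(P, X \oplus M) \to (P, \coker f \oplus M)$ in $\KK_P$.
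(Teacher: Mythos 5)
Your proposal is correct and follows essentially the same route as the paper's proof: transport the sequence $0 \to X \xrightarrow{f} M' \to \coker f \to 0$ through the exact, fully faithful functor $F_P \colon \ccok P \to \Sub(DP)$, apply Proposition~\ref{prop:tilt-mutation} to the tilting $\Gamma_P$-module $F_P(X \oplus M)$, and translate back via Proposition~\ref{prop:tilting}, with the original sequence serving as the exchange sequence for the arrow in $\KK_P$. The only difference is expository: you spell out why $F_P f$ remains a minimal left $\add(F_P M)$-approximation in all of $\mod\Gamma_P$ (fullness of $\Sub(DP)$), a step the paper dispatches by citing full faithfulness on $\ccok P$, while both you and the paper leave implicit the check $\pd_{\Gamma_P} F_P(\coker f) \leq 1$ coming from Proposition~\ref{prop:ice}~(5).
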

\begin{proof}
(1) Consider a short exact sequence
  \begin{equation}\label{eq:approximation}
        \begin{tikzcd}
        0 \rar & X \rar["f"] & M' \rar & \Cokernel f \rar & 0.
      \end{tikzcd}
  \end{equation}
Since $X$ and $M'$ are in $\ccok P$, so is $\Cokernel f$ because $\ccok P$ is closed under cokernels (Proposition \ref{prop:ice} (3)). Applying $F_P$ to the above exact sequence, we obtain a short exact sequence
  \[
  \begin{tikzcd}
    0 \rar & F_PX \rar["F_Pf"] & F_PM' \rar & F_P(\Cokernel f) \rar & 0
  \end{tikzcd}
  \]
in $\mod\Gamma_P$ since $F_P$ is exact on $\ccok P$. Then $F_Pf$ is a minimal left $\add F_PM$-approximation since $F_P$ is fully faithful on $\ccok P$ by Proposition~\ref{prop:ice} (4). Proposition~\ref{prop:tilt-mutation}, implies that $F_P(\Cokernel f\oplus M)$ is a tilting $\Gamma_P$-module with $F_P (\coker f)$ indecomposable. By Proposition~\ref{prop:tilting} (1), we obtain the desired result.

(2) Note that $\coker f$ is indecomposable since so is $F_P \coker f$ and $F_P$ is fully faithful on $\ccok P$. Then \eqref{eq:approximation} gives an exchange sequence of $(P,X\oplus M)\to (P,\Cokernel f\oplus M)$.
\end{proof}

As a corollary, we can interpret arrows in $\KK_P$ using mutation.
\begin{corollary}
  Let $(P,I)$ and $(P,I')$ be basic twin rigid modules. Then there is an arrow $(P,I) \to (P,I')$ in $\KK_P$ if and only if there is some indecomposable direct summand $X$ of $I$ such that $(P,I')$ is a mutation of $(P,I)$ with respect to $X$.
\end{corollary}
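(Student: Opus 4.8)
My plan is to treat the two implications separately; the reverse implication is immediate, and essentially all the content lies in showing that the map appearing in an exchange sequence is forced to be a minimal left approximation.

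For the \emph{if} direction I would argue directly from Proposition~\ref{prop:leftmutation}~(2): if $(P,I')$ is the mutation of $(P,I)$ with respect to an indecomposable summand $X$, then $I = X \oplus M$, the minimal left $\add M$-approximation $f \colon X \to M'$ is a monomorphism, and $I' = \coker f \oplus M$; Proposition~\ref{prop:leftmutation}~(2) then yields the arrow $(P,I) \to (P,I')$ in $\KK_P$, so nothing further is needed here.

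For the \emph{only if} direction, suppose there is an arrow $(P,I) \to (P,I')$, so by definition there are decompositions $I = X \oplus M$ and $I' = Y \oplus M$ with $X, Y$ indecomposable together with an exchange sequence $0 \to X \xrightarrow{g} M' \to Y \to 0$ with $M' \in \add M$. The goal is to identify $g$ with the minimal left $\add M$-approximation of $X$, so that the mutation of $(P,I)$ at $X$ is defined and equals $(P, \coker g \oplus M) = (P, Y \oplus M) = (P, I')$. The key step I would carry out first is that $g$ is a left $\add M$-approximation: applying $\Hom_\Lambda(-, M'')$ with $M'' \in \add M$ to the exchange sequence gives the exact sequence $\Hom_\Lambda(M', M'') \to \Hom_\Lambda(X, M'') \to \Ext_\Lambda^1(Y, M'')$, and the $\Ext$-group vanishes because $I' = Y \oplus M$ is rigid; hence every morphism $X \to M''$ factors through $g$. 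I emphasize that this uses \emph{only} the rigidity of $I'$, not the full twin rigid structure.

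The step I expect to be the most delicate is minimality. I would split $g$ into its left-minimal part and a trivial summand, writing $M' = M'_0 \oplus M'_1$ with $g_0 \colon X \to M'_0$ left-minimal and $g$ zero on $X \to M'_1$, so that $Y \iso \coker g_0 \oplus M'_1$. Since $Y$ is indecomposable and $g$ is a monomorphism, I would rule out the degenerate possibilities: if $M'_1 \neq 0$ then either $\coker g_0 = 0$, forcing $g_0$ to be an isomorphism (it is injective, as $g$ is) and hence $X \in \add M$, or $Y \iso M'_1 \in \add M$, each contradicting the basicness of $I = X \oplus M$ or of $I' = Y \oplus M$. This leaves $M'_1 = 0$, so $g$ is the minimal left $\add M$-approximation of $X$ and is a monomorphism; by uniqueness of minimal approximations, the mutation in Definition~\ref{def:mutation} is well-defined and equals $(P,I')$. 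The main obstacle is thus purely this minimality/well-definedness bookkeeping rather than any substantial homological input; alternatively, one could transport the entire argument through the exact equivalence $F_P$ of Proposition~\ref{prop:ice}~(4) and invoke the tilting exchange theory, but the direct argument above seems cleaner.
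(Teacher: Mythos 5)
Your proposal is correct and takes essentially the same route as the paper: the ``if'' direction is exactly Proposition~\ref{prop:leftmutation}, and for the ``only if'' direction you, like the paper, show the left map of the exchange sequence is a left $\add M$-approximation from the vanishing of $\Ext_\Lambda^1(Y,M)$ (rigidity of $I'$) and then establish left minimality. Your more detailed minimality bookkeeping (splitting $g$ as $g_0\oplus 0$ and deriving contradictions with basicness) is just an unpacking of the paper's one-line argument that the sequence does not split and $Y$ is indecomposable, so no further comparison is needed.
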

\begin{proof}
  The ``if'' part is Proposition \ref{prop:leftmutation}. To show the ``only if'' part, suppose that there is an arrow $(P,I) \to (P,I')$. Then there is a short exact sequence \eqref{eq:exchange-twin}. It follows that the left map in \eqref{eq:exchange-twin} is a minimal left $\add M$-approximation. Indeed, we can show that it is a left $\add M$-approximation since $\Ext_\Lambda^1(Y,M) = 0$, and it is left minimal since this sequence does not split and $Y$ is indecomposable. Therefore, $(P,I')$ is a mutation of $(P,I)$ with respect to $X$.
\end{proof}

In the rest of this subsection, we will show the following result:
\begin{theorem}\label{thm:path}
Assume that $\Lambda$ is of finite representation type. 
For any basic twin rigid module $(P,I)$, there exists a path $(P,P)\to\cdots\to (P,I)$ in $\KK_P$, that is, $(P,I)$ can be obtained by iterating mutation from $(P,P)$.
\end{theorem}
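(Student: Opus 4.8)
The plan is to transport the entire problem to tilting modules over $\Gamma_P$ via the exact equivalence $F_P \colon \ccok P \equi \Sub(DP)$. By Proposition~\ref{prop:tilting}, the basic twin rigid modules $(P,I)$ correspond bijectively to the basic tilting $\Gamma_P$-modules $T = F_P I$ lying in the torsion-free class $\Sub(DP)$, and under this correspondence the vertex $(P,P)$ goes to $F_P P = \Gamma_P$, the maximal tilting module. Moreover, by Proposition~\ref{prop:leftmutation} together with the correspondence of Proposition~\ref{prop:tilting}, an arrow $(P,I_1)\to(P,I_2)$ in $\KK_P$ is the same datum as a tilting left mutation $F_P I_1 \to F_P I_2$ both of whose terms lie in $\Sub(DP)$. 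So it suffices to show: for every basic tilting $\Gamma_P$-module $T$ contained in $\Sub(DP)$ there is a chain of tilting left mutations $\Gamma_P = T_n \to \cdots \to T_0 = T$ with every $T_i$ contained in $\Sub(DP)$.

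First I would record that $\Gamma_P$ is representation finite: by Proposition~\ref{prop:ice}(1) we have $\WW_P \equi \mod\Lambda_P$ for a representation-finite hereditary algebra $\Lambda_P$, and $\Gamma_P = \End_{\Lambda_P}(P)$ is a tilted algebra of $\Lambda_P$, hence also representation finite. In particular the tilting order on $\Gamma_P$ --- with $T \leq T'$ precisely when $\Fac T \subseteq \Fac T'$ --- is a finite poset whose Hasse quiver coincides with the tilting quiver (Theorem~\ref{thm:hasse}), and $\Gamma_P$ is its maximum.

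The heart of the argument is the following claim: every tilting module $T \in \Sub(DP)$ with $T \neq \Gamma_P$ admits an incoming tilting arrow $T' \to T$ with $T' \in \Sub(DP)$ and $\Fac T' \supsetneq \Fac T$. To prove it, note that $T < \Gamma_P$, so $T$ is not maximal, and by finiteness of the poset it is covered by some tilting module $T'$, yielding an arrow $T' \to T$ in the tilting quiver. Writing $T' = Z \oplus M$ and $T = Z^{*} \oplus M$, the associated exchange sequence is a short exact sequence $0 \to Z \to M' \to Z^{*} \to 0$ with $M' \in \add M$, so $Z$ is a submodule of $M'$. Now $M$ is a direct summand of $T \in \Sub(DP)$, and $\Sub(DP)$ is a torsion-free class by Proposition~\ref{prop:ice}(4), hence closed under direct summands and submodules; therefore $M \in \Sub(DP)$, $M' \in \add M \subseteq \Sub(DP)$, and the inclusion $Z \hookrightarrow M'$ forces $Z \in \Sub(DP)$. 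Consequently $T' = Z \oplus M \in \Sub(DP)$, which proves the claim.

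Finally I would iterate the claim. Starting from $T = T_0$, I repeatedly choose such an up-arrow to produce a strictly increasing chain $\Fac T_0 \subsetneq \Fac T_1 \subsetneq \cdots$ of tilting modules all contained in $\Sub(DP)$. Since the poset is finite, this chain terminates at some $T_n$ that is maximal among tilting modules in $\Sub(DP)$; by the claim the only such maximal module is $\Gamma_P$, so $T_n = \Gamma_P$. Reading the resulting sequence of arrows $\Gamma_P = T_n \to \cdots \to T_0 = T$ back through $F_P$ yields the desired path $(P,P) \to \cdots \to (P,I)$ in $\KK_P$. The delicate point --- and the step I expect to carry the real weight --- is the claim that up-mutation stays inside $\Sub(DP)$; everything there hinges on the exchange sequence exhibiting the new summand $Z$ as a submodule of a module in $\add M$, combined with the closure of the torsion-free class $\Sub(DP)$ under submodules.
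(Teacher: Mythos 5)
Your proposal is correct and follows essentially the same route as the paper: transport along the exact equivalence $F_P\colon \ccok P \simeq \Sub(DP)$ (the paper's Lemma~\ref{lem:fullsub}), representation-finiteness of $\Gamma_P$, and the key observation that the source of a Hasse arrow into a tilting module in $\Sub(DP)$ again lies in $\Sub(DP)$, which is exactly the paper's Lemma~\ref{lem:subdp} proved by the same exchange-sequence argument ($T' \in \Sub T \subseteq \Sub(DP)$). The only cosmetic difference is that you build the path bottom-up by iterating covers and a maximality argument, whereas the paper takes an arbitrary path $\Gamma_P \to \cdots \to F_P I$ in the finite quiver $\HH(\tilt\Gamma_P)$ and propagates membership in $\Sub(DP)$ backwards along it.
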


To give a proof, we need the following  two results related to the Hasse quiver of the poset $\tilt\Gamma_P$ (see Definition~\ref{def:tilt}).

\begin{lemma}\label{lem:subdp}
 Let $T'\to T$ be an arrow in $\HH(\tilt\Gamma_P)$. If $T$ belongs to $\Sub(DP)$, then so does $T'$.
\end{lemma}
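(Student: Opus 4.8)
The plan is to read off the precise shape of the exchange sequence attached to the arrow and then exploit that $\Sub(DP)$ is a torsion-free class, hence closed under submodules.

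First I would recall, from the appendix (Theorem~\ref{thm:hasse} together with the description of left mutation in Proposition~\ref{prop:tilt-mutation}), that an arrow $T'\to T$ in $\HH(\tilt\Gamma_P)$ arises from a short exact sequence $0 \to X \to E \to Y \to 0$ with $X,Y$ indecomposable, $T' = X\oplus M$, $T = Y\oplus M$, and $E \in \add M$; here $X\to E$ is a minimal left $\add M$-approximation, in particular a monomorphism. The only point needing care is the \emph{direction}: for the arrow $T'\to T$ (with source $T'$), the summand $X$ exclusive to $T'$ must appear as the \emph{subobject} of the sequence, not as the quotient. I would confirm this against the convention recorded via Theorem~\ref{thm:hasse}, namely that an exchange sequence from $T_1$ to $T_2$ yields $\Fac T_1 \supsetneq \Fac T_2$: since $Y=\coker(X\to E)$ is a quotient of $E\in\add M\subseteq\add T'$, one gets $Y\in\Fac T'$ and hence $\Fac T\subseteq\Fac T'$, consistent with $T'$ being the source. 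Were the sequence of the opposite form $0\to Y\to E\to X\to 0$, the same computation would make $T$ the source, contradicting the orientation.

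With the sequence in hand, the rest is immediate. By Proposition~\ref{prop:ice}~(4), $\Sub(DP)$ is a torsion-free class in $\mod\Gamma_P$, so it is closed under submodules and direct summands (and, being additive, under finite direct sums). From $T=Y\oplus M\in\Sub(DP)$ I obtain $M\in\Sub(DP)$, since $M$ is a direct summand of $T$; consequently $E\in\add M\subseteq\Sub(DP)$. The monomorphism $X\hookrightarrow E$ then forces $X\in\Sub(DP)$ by closure under submodules. Finally $T'=X\oplus M$ has both summands in $\Sub(DP)$, whence $T'\in\Sub(DP)$, as desired.

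The only genuine obstacle is the bookkeeping in the first step: correctly identifying which of the two exchanged indecomposable summands is the sub and which is the quotient for the given arrow orientation. Once that direction is pinned down, closure of a torsion-free class under submodules does all the work, and no homological computation is required.
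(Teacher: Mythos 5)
Your proof is correct and follows essentially the same route as the paper: both invoke Theorem~\ref{thm:hasse} to produce the tilting exchange sequence $0 \to X \to \widetilde{M} \to Y \to 0$ attached to the arrow $T' \to T$ (with the source's exclusive summand $X$ as the subobject, as you correctly verify) and then conclude $T' \in \Sub T \subseteq \Sub(DP)$. Your version merely spells out the closure properties of the torsion-free class $\Sub(DP)$ that the paper's one-line deduction leaves implicit.
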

\begin{proof}
By Theorem~\ref{thm:hasse}, there exists a tilting exchange sequence from $T'$ to $T$. This sequence implies $T' \in \Sub T$, and we have $T \in \Sub (DP)$ by assumption.
Therefore, we obtain $T'\in \Sub T \subseteq \Sub(DP)$. 
\end{proof}

\begin{lemma}\label{lem:fullsub}
Let $(P,I)$ and $(P,I')$ be basic twin rigid modules. There exists an arrow $(P,I)\to(P,I')$ in $\KK_P$ if and only if there exists an arrow $F_PI\to F_PI'$ in $\HH(\tilt\Gamma_P)$. In particular, $F_P$ induces a quiver isomorphism between $\KK_P$ and the full subquiver of $\HH(\tilt\Gamma_P)$ consisting of vertices in $\Sub(DP)$. 
\end{lemma}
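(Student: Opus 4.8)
The plan is to reduce everything to the exact equivalence $F_P \colon \ccok P \equi \Sub(DP)$ from Proposition~\ref{prop:ice}~(4) and to the dictionary between tilting exchange sequences and arrows of $\HH(\tilt\Gamma_P)$ recorded in Theorem~\ref{thm:hasse}. First I would observe that the vertex sets already match: by Proposition~\ref{prop:tilting}~(2) the rule $(P,I)\mapsto F_PI$ is a bijection from isomorphism classes of basic twin rigid modules with first entry $P$ onto the basic tilting $\Gamma_P$-modules lying in $\Sub(DP)$, that is, onto the vertices of the full subquiver of $\HH(\tilt\Gamma_P)$ supported on $\Sub(DP)$ (here one uses that an equivalence preserves both basicness and indecomposability). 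Hence the whole statement follows once the arrow correspondence is established in both directions.

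For the forward direction, an arrow $(P,I)\to(P,I')$ in $\KK_P$ comes with decompositions $I = X\oplus M$, $I' = Y\oplus M$ and an exchange sequence \eqref{eq:exchange-twin}, all of whose terms lie in $\ccok P$. Applying $F_P$, which is exact and fully faithful on $\ccok P$, yields a short exact sequence $0\to F_PX\to F_PM'\to F_PY\to 0$ in $\mod\Gamma_P$ with $F_PM'\in\add F_PM$ and $F_PX, F_PY$ indecomposable. Since $(P,I)$ and $(P,I')$ are twin rigid, $F_PI = F_PX\oplus F_PM$ and $F_PI' = F_PY\oplus F_PM$ are tilting $\Gamma_P$-modules, and rigidity of $F_PI'$ gives $\Ext^1_{\Gamma_P}(F_PY, F_PM)=0$, which forces the left map to be a left $\add F_PM$-approximation; left minimality follows from indecomposability of $F_PY$ and non-splitness. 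Thus the sequence is a tilting exchange sequence, and Theorem~\ref{thm:hasse} produces an arrow $F_PI\to F_PI'$ in $\HH(\tilt\Gamma_P)$.

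For the reverse direction I would start from an arrow $F_PI\to F_PI'$ in $\HH(\tilt\Gamma_P)$, which by Theorem~\ref{thm:hasse} supplies a tilting exchange sequence $0\to U\to V\to W\to 0$ with $F_PI = U\oplus N$, $F_PI' = W\oplus N$, $V\in\add N$ and $U,W$ indecomposable. The crucial point is that this entire sequence lies in $\Sub(DP)$: as $F_PI'\in\Sub(DP)$ and $\Sub(DP)$ is a torsion-free class, hence closed under direct summands, we get $N,W\in\Sub(DP)$, so $V\in\add N\subseteq\Sub(DP)$, and similarly $U\in\Sub(DP)$. One may therefore apply a quasi-inverse of $F_P\colon\ccok P\equi\Sub(DP)$ to obtain a short exact sequence $0\to X\to M'\to Y\to 0$ in $\ccok P$ with $F_PX\iso U$, $F_PY\iso W$, $F_PM\iso N$ and $M'\in\add M$; since $F_P$ reflects direct-sum decompositions, $I = X\oplus M$ and $I' = Y\oplus M$, with $X,Y$ indecomposable. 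This is an exchange sequence of the form \eqref{eq:exchange-twin}, so it gives an arrow $(P,I)\to(P,I')$ in $\KK_P$.

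Combining the vertex bijection with the two-way arrow correspondence shows that $F_P$ is a quiver isomorphism onto the full subquiver in question, since any arrow of that subquiver joins vertices of the form $F_PI, F_PI'$ and hence descends to $\KK_P$. I expect the main obstacle to be precisely the containment of the pulled-back exchange sequence inside $\Sub(DP)$ in the reverse direction, so that the quasi-inverse of $F_P$ applies; the remaining steps are routine transport along the equivalence $F_P$ together with Theorem~\ref{thm:hasse}, and do not appear to require Lemma~\ref{lem:subdp} (which instead feeds into Theorem~\ref{thm:path}).
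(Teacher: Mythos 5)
Your proof is correct and follows essentially the same route as the paper's: transport exchange sequences in both directions along the exact equivalence $F_P \colon \ccok P \equi \Sub(DP)$ and invoke Theorem~\ref{thm:hasse}, with the vertex bijection supplied by Proposition~\ref{prop:tilting}~(2). The step you single out as the main obstacle --- that the tilting exchange sequence lies entirely in $\Sub(DP)$ because this torsion-free class is closed under direct summands and extensions, so that a quasi-inverse of $F_P$ applies --- is precisely what the paper's terser phrase ``we can write the tilting exchange sequence as'' leaves implicit, and you are also right that Lemma~\ref{lem:subdp} is not needed here.
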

\begin{proof}
Suppose that there is an arrow $(P,I)\to(P,I')$. By definition, we have $I=X\oplus M$ and $I'=Y\oplus M$ and there is an exchange sequence \eqref{eq:exchange-twin}. Applying $F_P$ to this, we obtain a tilting exchange sequence from $F_PI$ to $F_PI'$. By Theorem~\ref{thm:hasse}, we have an arrow $F_PI\to F_PI'$ in $\HH(\tilt\Gamma_P)$.

Conversely, assume that there exists an arrow $F_PI\to F_PI'$ in $\HH(\tilt\Gamma_P)$. By Theorem~\ref{thm:hasse}, there exists a tilting exchange sequence from $F_P I$ to $F_P I'$. Since $F_P$ is fully faithful on $\ccok P$, we can write the tilting exchange sequence as follows:
  \[
  \begin{tikzcd}
    0 \rar & F_PX \rar & F_PM' \rar & F_PY \rar & 0,
  \end{tikzcd}
  \]
where $I = X \oplus M$, $I' = Y \oplus M$, $M' \in \add M$, and $X$ and $Y$ are indecomposable. Since $F_P$ induces an exact equivalence $\ccok P \equi \Sub (DP)$,
we obtain an exchange sequence of $(P,I)\to(P,I')$.

The latter part follows from the former part and Proposition~\ref{prop:tilting} (2).
\end{proof}

\begin{proof}[Proof of Theorem~\ref{thm:path}]
It follows from \cite[VIII. Lemma 3.2 (c)]{ASS} that $\Gamma_P$ is of finite representation type. Thus $\HH(\tilt\Gamma_P)$ is a finite quiver, so there exists a path $\Gamma_P\to\cdots\to F_PI$ in $\HH(\tilt\Gamma_P)$ since $\Gamma_P \geq F_P I$ holds in $\tilt \Gamma_P$. Since $F_P I$ belongs to $\Sub (DP)$, by applying Lemma~\ref{lem:subdp} repeatedly, we can show that all the vertices appearing in this path belong to $\Sub(DP)$. By Lemma~\ref{lem:fullsub}, we obtain the desired result. 
\end{proof}

\section{Examples}\label{sec:ex}

In this section, we give some examples and remarks of IE-closed subcategories and twin rigid modules over a hereditary algebra. We denote by $k$ a field.

By Theorem \ref{thm:path}, we can obtain all twin rigid $\Lambda$-modules if $\Lambda$ is a hereditary algebra of finite representation type as follows: First, we can obtain all tilting $\Lambda$-modules $T$ by iterating mutation from $(\Lambda, \Lambda)$, since $T$ is tilting if and only if $(\Lambda, T)$ is twin rigid.
Then rigid $\Lambda$-modules are precisely direct summands of tilting modules, so we obtain all rigid $\Lambda$-modules.
Finally, for each rigid $\Lambda$-module $P$, by applying mutation from $(P,P)$ repeatedly, we obtain all twin rigid modules.

\begin{example}\label{ex:ex}
Let $Q$ be a quiver $1\leftarrow 2\leftarrow 3$. The Auslander-Reiten quiver of $\mod kQ$ is as follows:
      \[
      \begin{tikzpicture}[scale = 0.8]
        \node (1) at (0,0) {$\sst{1}$};
        \node (2) at (2,0) {$\sst{2}$};
        \node (12) at (1,1) {$\sst{2 \\ 1}$};
        \node (123) at (2,2) {$\sst{3 \\ 2 \\ 1}$};
        \node (23) at (3,1) {$\sst{3 \\ 2}$};
        \node (3) at (4,0) {$\sst{3}$};

        \draw[->] (1) -- (12);
        \draw[->] (12) -- (2);
        \draw[->] (12) -- (123);
        \draw[->] (123) -- (23);
        \draw[->] (2) -- (23);
        \draw[->] (23) -- (3);
        \draw[dashed] (1) -- (2);
        \draw[dashed] (12) -- (23);
        \draw[dashed] (2) -- (3);
      \end{tikzpicture}
      \]
We can obtain all twin rigid modules by using Theorem \ref{thm:path} and the above argument, and
Table~\ref{tab:ie} is the complete list of IE-closed subcategories $\CC$ of $\mod kQ$ and the corresponding twin rigid modules $(P,I)$ in Theorem~\ref{thm:a}. The black vertices correspond to the indecomposable modules in $\CC$ and the indecomposable summands of $P$ and $I$ respectively.

  \begin{longtable}[c]{x{3cm}|x{3cm}||x{3cm}|x{3cm}}
      \hline
      $\CC$ & $(P,I)$ & $\CC$ & $(P,I)$
      \\ \hline \hline
      \endfirsthead
      \hline
      $\CC$ &$(P,I)$ & $\CC$ & $(P,I)$ 
      \\ \hline \hline
      \endhead
      \begin{tikzpicture}[scale=0.2, every node/.style={scale=0.5}]
        \node (1) at (0,0) [blackv] {};
        \node (2) at (2,0) [whitev] {};
        \node (12) at (1,1) [blackv] {};
        \node (123) at (2,2) [blackv] {};
        \node (23) at (3,1) [whitev] {};
        \node (3) at (4,0) [whitev] {};

      \end{tikzpicture}
       & 
      \begin{tikzpicture}[scale=0.2, every node/.style={scale=0.5}]
        \node (1) at (0,0) [blackv] {};
        \node (2) at (2,0) [whitev] {};
        \node (12) at (1,1) [blackv] {};
        \node (123) at (2,2) [blackv] {};
        \node (23) at (3,1) [whitev] {};
        \node (3) at (4,0) [whitev] {};

      \end{tikzpicture},
      \begin{tikzpicture}[scale=0.2, every node/.style={scale=0.5}]
        \node (1) at (0,0) [blackv] {};
        \node (2) at (2,0) [whitev] {};
        \node (12) at (1,1) [blackv] {};
        \node (123) at (2,2) [blackv] {};
        \node (23) at (3,1) [whitev] {};
        \node (3) at (4,0) [whitev] {};

      \end{tikzpicture}
      &
      \begin{tikzpicture}[scale=0.2, every node/.style={scale=0.5}]
        \node (1) at (0,0) [blackv] {};
        \node (2) at (2,0) [blackv] {};
        \node (12) at (1,1) [blackv] {};
        \node (123) at (2,2) [blackv] {};
        \node (23) at (3,1) [whitev] {};
        \node (3) at (4,0) [whitev] {};

      \end{tikzpicture}
      & 
      \begin{tikzpicture}[scale=0.2, every node/.style={scale=0.5}]
        \node (1) at (0,0) [blackv] {};
        \node (2) at (2,0) [whitev] {};
        \node (12) at (1,1) [blackv] {};
        \node (123) at (2,2) [blackv] {};
        \node (23) at (3,1) [whitev] {};
        \node (3) at (4,0) [whitev] {};

      \end{tikzpicture},
      \begin{tikzpicture}[scale=0.2, every node/.style={scale=0.5}]
        \node (1) at (0,0) [whitev] {};
        \node (2) at (2,0) [blackv] {};
        \node (12) at (1,1) [blackv] {};
        \node (123) at (2,2) [blackv] {};
        \node (23) at (3,1) [whitev] {};
        \node (3) at (4,0) [whitev] {};

      \end{tikzpicture}
      \\ \hline
      \begin{tikzpicture}[scale=0.2, every node/.style={scale=0.5}]
        \node (1) at (0,0) [blackv] {};
        \node (2) at (2,0) [blackv] {};
        \node (12) at (1,1) [blackv] {};
        \node (123) at (2,2) [blackv] {};
        \node (23) at (3,1) [blackv] {};
        \node (3) at (4,0) [whitev] {};

      \end{tikzpicture}
       & 
      \begin{tikzpicture}[scale=0.2, every node/.style={scale=0.5}]
        \node (1) at (0,0) [blackv] {};
        \node (2) at (2,0) [whitev] {};
        \node (12) at (1,1) [blackv] {};
        \node (123) at (2,2) [blackv] {};
        \node (23) at (3,1) [whitev] {};
        \node (3) at (4,0) [whitev] {};

      \end{tikzpicture},
      \begin{tikzpicture}[scale=0.2, every node/.style={scale=0.5}]
        \node (1) at (0,0) [whitev] {};
        \node (2) at (2,0) [blackv] {};
        \node (12) at (1,1) [whitev] {};
        \node (123) at (2,2) [blackv] {};
        \node (23) at (3,1) [blackv] {};
        \node (3) at (4,0) [whitev] {};

      \end{tikzpicture}
      &
      \begin{tikzpicture}[scale=0.2, every node/.style={scale=0.5}]
        \node (1) at (0,0) [blackv] {};
        \node (2) at (2,0) [whitev] {};
        \node (12) at (1,1) [blackv] {};
        \node (123) at (2,2) [blackv] {};
        \node (23) at (3,1) [whitev] {};
        \node (3) at (4,0) [blackv] {};

      \end{tikzpicture}
       & 
      \begin{tikzpicture}[scale=0.2, every node/.style={scale=0.5}]
        \node (1) at (0,0) [blackv] {};
        \node (2) at (2,0) [whitev] {};
        \node (12) at (1,1) [blackv] {};
        \node (123) at (2,2) [blackv] {};
        \node (23) at (3,1) [whitev] {};
        \node (3) at (4,0) [whitev] {};

      \end{tikzpicture},
      \begin{tikzpicture}[scale=0.2, every node/.style={scale=0.5}]
        \node (1) at (0,0) [blackv] {};
        \node (2) at (2,0) [whitev] {};
        \node (12) at (1,1) [whitev] {};
        \node (123) at (2,2) [blackv] {};
        \node (23) at (3,1) [whitev] {};
        \node (3) at (4,0) [blackv] {};

      \end{tikzpicture}
      \\ \hline
      \begin{tikzpicture}[scale=0.2, every node/.style={scale=0.5}]
        \node (1) at (0,0) [blackv] {};
        \node (2) at (2,0) [blackv] {};
        \node (12) at (1,1) [blackv] {};
        \node (123) at (2,2) [blackv] {};
        \node (23) at (3,1) [blackv] {};
        \node (3) at (4,0) [blackv] {};

      \end{tikzpicture}
       & 
      \begin{tikzpicture}[scale=0.2, every node/.style={scale=0.5}]
        \node (1) at (0,0) [blackv] {};
        \node (2) at (2,0) [whitev] {};
        \node (12) at (1,1) [blackv] {};
        \node (123) at (2,2) [blackv] {};
        \node (23) at (3,1) [whitev] {};
        \node (3) at (4,0) [whitev] {};

      \end{tikzpicture},
      \begin{tikzpicture}[scale=0.2, every node/.style={scale=0.5}]
        \node (1) at (0,0) [whitev] {};
        \node (2) at (2,0) [whitev] {};
        \node (12) at (1,1) [whitev] {};
        \node (123) at (2,2) [blackv] {};
        \node (23) at (3,1) [blackv] {};
        \node (3) at (4,0) [blackv] {};

      \end{tikzpicture}
      &
      \begin{tikzpicture}[scale=0.2, every node/.style={scale=0.5}]
        \node (1) at (0,0) [whitev] {};
        \node (2) at (2,0) [blackv] {};
        \node (12) at (1,1) [blackv] {};
        \node (123) at (2,2) [blackv] {};
        \node (23) at (3,1) [whitev] {};
        \node (3) at (4,0) [whitev] {};

      \end{tikzpicture}
       & 
      \begin{tikzpicture}[scale=0.2, every node/.style={scale=0.5}]
        \node (1) at (0,0) [whitev] {};
        \node (2) at (2,0) [blackv] {};
        \node (12) at (1,1) [blackv] {};
        \node (123) at (2,2) [blackv] {};
        \node (23) at (3,1) [whitev] {};
        \node (3) at (4,0) [whitev] {};
        
      \end{tikzpicture},
      \begin{tikzpicture}[scale=0.2, every node/.style={scale=0.5}]
        \node (1) at (0,0) [whitev] {};
        \node (2) at (2,0) [blackv] {};
        \node (12) at (1,1) [blackv] {};
        \node (123) at (2,2) [blackv] {};
        \node (23) at (3,1) [whitev] {};
        \node (3) at (4,0) [whitev] {};

      \end{tikzpicture}
      \\ \hline
      \begin{tikzpicture}[scale=0.2, every node/.style={scale=0.5}]
        \node (1) at (0,0) [whitev] {};
        \node (2) at (2,0) [blackv] {};
        \node (12) at (1,1) [blackv] {};
        \node (123) at (2,2) [blackv] {};
        \node (23) at (3,1) [blackv] {};
        \node (3) at (4,0) [whitev] {};

      \end{tikzpicture}
       & 
      \begin{tikzpicture}[scale=0.2, every node/.style={scale=0.5}]
        \node (1) at (0,0) [whitev] {};
        \node (2) at (2,0) [blackv] {};
        \node (12) at (1,1) [blackv] {};
        \node (123) at (2,2) [blackv] {};
        \node (23) at (3,1) [whitev] {};
        \node (3) at (4,0) [whitev] {};

      \end{tikzpicture},
      \begin{tikzpicture}[scale=0.2, every node/.style={scale=0.5}]
        \node (1) at (0,0) [whitev] {};
        \node (2) at (2,0) [blackv] {};
        \node (12) at (1,1) [whitev] {};
        \node (123) at (2,2) [blackv] {};
        \node (23) at (3,1) [blackv] {};
        \node (3) at (4,0) [whitev] {};

      \end{tikzpicture}
      &
      \begin{tikzpicture}[scale=0.2, every node/.style={scale=0.5}]
        \node (1) at (0,0) [whitev] {};
        \node (2) at (2,0) [blackv] {};
        \node (12) at (1,1) [blackv] {};
        \node (123) at (2,2) [blackv] {};
        \node (23) at (3,1) [blackv] {};
        \node (3) at (4,0) [blackv] {};

      \end{tikzpicture}
       & 
      \begin{tikzpicture}[scale=0.2, every node/.style={scale=0.5}]
        \node (1) at (0,0) [whitev] {};
        \node (2) at (2,0) [blackv] {};
        \node (12) at (1,1) [blackv] {};
        \node (123) at (2,2) [blackv] {};
        \node (23) at (3,1) [whitev] {};
        \node (3) at (4,0) [whitev] {};

      \end{tikzpicture},
      \begin{tikzpicture}[scale=0.2, every node/.style={scale=0.5}]
        \node (1) at (0,0) [whitev] {};
        \node (2) at (2,0) [whitev] {};
        \node (12) at (1,1) [whitev] {};
        \node (123) at (2,2) [blackv] {};
        \node (23) at (3,1) [blackv] {};
        \node (3) at (4,0) [blackv] {};

      \end{tikzpicture}
      \\ \hline
      \begin{tikzpicture}[scale=0.2, every node/.style={scale=0.5}]
        \node (1) at (0,0) [whitev] {};
        \node (2) at (2,0) [blackv] {};
        \node (12) at (1,1) [whitev] {};
        \node (123) at (2,2) [blackv] {};
        \node (23) at (3,1) [blackv] {};
        \node (3) at (4,0) [whitev] {};

      \end{tikzpicture}
       & 
      \begin{tikzpicture}[scale=0.2, every node/.style={scale=0.5}]
        \node (1) at (0,0) [whitev] {};
        \node (2) at (2,0) [blackv] {};
        \node (12) at (1,1) [whitev] {};
        \node (123) at (2,2) [blackv] {};
        \node (23) at (3,1) [blackv] {};
        \node (3) at (4,0) [whitev] {};

      \end{tikzpicture},
      \begin{tikzpicture}[scale=0.2, every node/.style={scale=0.5}]
        \node (1) at (0,0) [whitev] {};
        \node (2) at (2,0) [blackv] {};
        \node (12) at (1,1) [whitev] {};
        \node (123) at (2,2) [blackv] {};
        \node (23) at (3,1) [blackv] {};
        \node (3) at (4,0) [whitev] {};

      \end{tikzpicture}
      &
      \begin{tikzpicture}[scale=0.2, every node/.style={scale=0.5}]
        \node (1) at (0,0) [whitev] {};
        \node (2) at (2,0) [blackv] {};
        \node (12) at (1,1) [whitev] {};
        \node (123) at (2,2) [blackv] {};
        \node (23) at (3,1) [blackv] {};
        \node (3) at (4,0) [blackv] {};

      \end{tikzpicture}
       & 
      \begin{tikzpicture}[scale=0.2, every node/.style={scale=0.5}]
        \node (1) at (0,0) [whitev] {};
        \node (2) at (2,0) [blackv] {};
        \node (12) at (1,1) [whitev] {};
        \node (123) at (2,2) [blackv] {};
        \node (23) at (3,1) [blackv] {};
        \node (3) at (4,0) [whitev] {};

      \end{tikzpicture},
      \begin{tikzpicture}[scale=0.2, every node/.style={scale=0.5}]
        \node (1) at (0,0) [whitev] {};
        \node (2) at (2,0) [whitev] {};
        \node (12) at (1,1) [whitev] {};
        \node (123) at (2,2) [blackv] {};
        \node (23) at (3,1) [blackv] {};
        \node (3) at (4,0) [blackv] {};

      \end{tikzpicture}
      \\ \hline
      \begin{tikzpicture}[scale=0.2, every node/.style={scale=0.5}]
        \node (1) at (0,0) [blackv] {};
        \node (2) at (2,0) [whitev] {};
        \node (12) at (1,1) [whitev] {};
        \node (123) at (2,2) [blackv] {};
        \node (23) at (3,1) [whitev] {};
        \node (3) at (4,0) [blackv] {};

      \end{tikzpicture}
       & 
      \begin{tikzpicture}[scale=0.2, every node/.style={scale=0.5}]
        \node (1) at (0,0) [blackv] {};
        \node (2) at (2,0) [whitev] {};
        \node (12) at (1,1) [whitev] {};
        \node (123) at (2,2) [blackv] {};
        \node (23) at (3,1) [whitev] {};
        \node (3) at (4,0) [blackv] {};

      \end{tikzpicture},
      \begin{tikzpicture}[scale=0.2, every node/.style={scale=0.5}]
        \node (1) at (0,0) [blackv] {};
        \node (2) at (2,0) [whitev] {};
        \node (12) at (1,1) [whitev] {};
        \node (123) at (2,2) [blackv] {};
        \node (23) at (3,1) [whitev] {};
        \node (3) at (4,0) [blackv] {};

      \end{tikzpicture}
      &
      \begin{tikzpicture}[scale=0.2, every node/.style={scale=0.5}]
        \node (1) at (0,0) [blackv] {};
        \node (2) at (2,0) [whitev] {};
        \node (12) at (1,1) [whitev] {};
        \node (123) at (2,2) [blackv] {};
        \node (23) at (3,1) [blackv] {};
        \node (3) at (4,0) [blackv] {};

      \end{tikzpicture}
      & 
      \begin{tikzpicture}[scale=0.2, every node/.style={scale=0.5}]
        \node (1) at (0,0) [blackv] {};
        \node (2) at (2,0) [whitev] {};
        \node (12) at (1,1) [whitev] {};
        \node (123) at (2,2) [blackv] {};
        \node (23) at (3,1) [whitev] {};
        \node (3) at (4,0) [blackv] {};

      \end{tikzpicture},
      \begin{tikzpicture}[scale=0.2, every node/.style={scale=0.5}]
        \node (1) at (0,0) [whitev] {};
        \node (2) at (2,0) [whitev] {};
        \node (12) at (1,1) [whitev] {};
        \node (123) at (2,2) [blackv] {};
        \node (23) at (3,1) [blackv] {};
        \node (3) at (4,0) [blackv] {};

      \end{tikzpicture}
      \\ \hline
      \begin{tikzpicture}[scale=0.2, every node/.style={scale=0.5}]
        \node (1) at (0,0) [whitev] {};
        \node (2) at (2,0) [whitev] {};
        \node (12) at (1,1) [whitev] {};
        \node (123) at (2,2) [blackv] {};
        \node (23) at (3,1) [blackv] {};
        \node (3) at (4,0) [blackv] {};

      \end{tikzpicture}
       & 
      \begin{tikzpicture}[scale=0.2, every node/.style={scale=0.5}]
        \node (1) at (0,0) [whitev] {};
        \node (2) at (2,0) [whitev] {};
        \node (12) at (1,1) [whitev] {};
        \node (123) at (2,2) [blackv] {};
        \node (23) at (3,1) [blackv] {};
        \node (3) at (4,0) [blackv] {};

      \end{tikzpicture},
      \begin{tikzpicture}[scale=0.2, every node/.style={scale=0.5}]
        \node (1) at (0,0) [whitev] {};
        \node (2) at (2,0) [whitev] {};
        \node (12) at (1,1) [whitev] {};
        \node (123) at (2,2) [blackv] {};
        \node (23) at (3,1) [blackv] {};
        \node (3) at (4,0) [blackv] {};

      \end{tikzpicture}
      &
      \begin{tikzpicture}[scale=0.2, every node/.style={scale=0.5}]
        \node (1) at (0,0) [blackv] {};
        \node (2) at (2,0) [whitev] {};
        \node (12) at (1,1) [blackv] {};
        \node (123) at (2,2) [whitev] {};
        \node (23) at (3,1) [whitev] {};
        \node (3) at (4,0) [whitev] {};

      \end{tikzpicture}
       & 
      \begin{tikzpicture}[scale=0.2, every node/.style={scale=0.5}]
        \node (1) at (0,0) [blackv] {};
        \node (2) at (2,0) [whitev] {};
        \node (12) at (1,1) [blackv] {};
        \node (123) at (2,2) [whitev] {};
        \node (23) at (3,1) [whitev] {};
        \node (3) at (4,0) [whitev] {};

      \end{tikzpicture},
      \begin{tikzpicture}[scale=0.2, every node/.style={scale=0.5}]
        \node (1) at (0,0) [blackv] {};
        \node (2) at (2,0) [whitev] {};
        \node (12) at (1,1) [blackv] {};
        \node (123) at (2,2) [whitev] {};
        \node (23) at (3,1) [whitev] {};
        \node (3) at (4,0) [whitev] {};

      \end{tikzpicture}
      \\ \hline
      \begin{tikzpicture}[scale=0.2, every node/.style={scale=0.5}]
        \node (1) at (0,0) [blackv] {};
        \node (2) at (2,0) [blackv] {};
        \node (12) at (1,1) [blackv] {};
        \node (123) at (2,2) [whitev] {};
        \node (23) at (3,1) [whitev] {};
        \node (3) at (4,0) [whitev] {};

      \end{tikzpicture}
       & 
      \begin{tikzpicture}[scale=0.2, every node/.style={scale=0.5}]
        \node (1) at (0,0) [blackv] {};
        \node (2) at (2,0) [whitev] {};
        \node (12) at (1,1) [blackv] {};
        \node (123) at (2,2) [whitev] {};
        \node (23) at (3,1) [whitev] {};
        \node (3) at (4,0) [whitev] {};

      \end{tikzpicture},
      \begin{tikzpicture}[scale=0.2, every node/.style={scale=0.5}]
        \node (1) at (0,0) [whitev] {};
        \node (2) at (2,0) [blackv] {};
        \node (12) at (1,1) [blackv] {};
        \node (123) at (2,2) [whitev] {};
        \node (23) at (3,1) [whitev] {};
        \node (3) at (4,0) [whitev] {};

      \end{tikzpicture}
      &
      \begin{tikzpicture}[scale=0.2, every node/.style={scale=0.5}]
        \node (1) at (0,0) [blackv] {};
        \node (2) at (2,0) [whitev] {};
        \node (12) at (1,1) [whitev] {};
        \node (123) at (2,2) [blackv] {};
        \node (23) at (3,1) [whitev] {};
        \node (3) at (4,0) [whitev] {};

      \end{tikzpicture}
       & 
      \begin{tikzpicture}[scale=0.2, every node/.style={scale=0.5}]
        \node (1) at (0,0) [blackv] {};
        \node (2) at (2,0) [whitev] {};
        \node (12) at (1,1) [whitev] {};
        \node (123) at (2,2) [blackv] {};
        \node (23) at (3,1) [whitev] {};
        \node (3) at (4,0) [whitev] {};

      \end{tikzpicture},
      \begin{tikzpicture}[scale=0.2, every node/.style={scale=0.5}]
        \node (1) at (0,0) [blackv] {};
        \node (2) at (2,0) [whitev] {};
        \node (12) at (1,1) [whitev] {};
        \node (123) at (2,2) [blackv] {};
        \node (23) at (3,1) [whitev] {};
        \node (3) at (4,0) [whitev] {};

      \end{tikzpicture}
      \\ \hline
      \begin{tikzpicture}[scale=0.2, every node/.style={scale=0.5}]
        \node (1) at (0,0) [blackv] {};
        \node (2) at (2,0) [whitev] {};
        \node (12) at (1,1) [whitev] {};
        \node (123) at (2,2) [blackv] {};
        \node (23) at (3,1) [blackv] {};
        \node (3) at (4,0) [whitev] {};

      \end{tikzpicture}
       & 
      \begin{tikzpicture}[scale=0.2, every node/.style={scale=0.5}]
        \node (1) at (0,0) [blackv] {};
        \node (2) at (2,0) [whitev] {};
        \node (12) at (1,1) [whitev] {};
        \node (123) at (2,2) [blackv] {};
        \node (23) at (3,1) [whitev] {};
        \node (3) at (4,0) [whitev] {};

      \end{tikzpicture},
      \begin{tikzpicture}[scale=0.2, every node/.style={scale=0.5}]
        \node (1) at (0,0) [whitev] {};
        \node (2) at (2,0) [whitev] {};
        \node (12) at (1,1) [whitev] {};
        \node (123) at (2,2) [blackv] {};
        \node (23) at (3,1) [blackv] {};
        \node (3) at (4,0) [whitev] {};

      \end{tikzpicture}
      &
      \begin{tikzpicture}[scale=0.2, every node/.style={scale=0.5}]
        \node (1) at (0,0) [blackv] {};
        \node (2) at (2,0) [whitev] {};
        \node (12) at (1,1) [whitev] {};
        \node (123) at (2,2) [whitev] {};
        \node (23) at (3,1) [whitev] {};
        \node (3) at (4,0) [blackv] {};

      \end{tikzpicture}
      & 
      \begin{tikzpicture}[scale=0.2, every node/.style={scale=0.5}]
        \node (1) at (0,0) [blackv] {};
        \node (2) at (2,0) [whitev] {};
        \node (12) at (1,1) [whitev] {};
        \node (123) at (2,2) [whitev] {};
        \node (23) at (3,1) [whitev] {};
        \node (3) at (4,0) [blackv] {};

      \end{tikzpicture},
      \begin{tikzpicture}[scale=0.2, every node/.style={scale=0.5}]
        \node (1) at (0,0) [blackv] {};
        \node (2) at (2,0) [whitev] {};
        \node (12) at (1,1) [whitev] {};
        \node (123) at (2,2) [whitev] {};
        \node (23) at (3,1) [whitev] {};
        \node (3) at (4,0) [blackv] {};

      \end{tikzpicture}
      \\ \hline
      \begin{tikzpicture}[scale=0.2, every node/.style={scale=0.5}]
        \node (1) at (0,0) [whitev] {};
        \node (2) at (2,0) [whitev] {};
        \node (12) at (1,1) [blackv] {};
        \node (123) at (2,2) [blackv] {};
        \node (23) at (3,1) [whitev] {};
        \node (3) at (4,0) [whitev] {};

      \end{tikzpicture}
       & 
      \begin{tikzpicture}[scale=0.2, every node/.style={scale=0.5}]
        \node (1) at (0,0) [whitev] {};
        \node (2) at (2,0) [whitev] {};
        \node (12) at (1,1) [blackv] {};
        \node (123) at (2,2) [blackv] {};
        \node (23) at (3,1) [whitev] {};
        \node (3) at (4,0) [whitev] {};

      \end{tikzpicture},
      \begin{tikzpicture}[scale=0.2, every node/.style={scale=0.5}]
        \node (1) at (0,0) [whitev] {};
        \node (2) at (2,0) [whitev] {};
        \node (12) at (1,1) [blackv] {};
        \node (123) at (2,2) [blackv] {};
        \node (23) at (3,1) [whitev] {};
        \node (3) at (4,0) [whitev] {};

      \end{tikzpicture}
      &
      \begin{tikzpicture}[scale=0.2, every node/.style={scale=0.5}]
        \node (1) at (0,0) [whitev] {};
        \node (2) at (2,0) [whitev] {};
        \node (12) at (1,1) [blackv] {};
        \node (123) at (2,2) [blackv] {};
        \node (23) at (3,1) [whitev] {};
        \node (3) at (4,0) [blackv] {};

      \end{tikzpicture}
       & 
      \begin{tikzpicture}[scale=0.2, every node/.style={scale=0.5}]
        \node (1) at (0,0) [whitev] {};
        \node (2) at (2,0) [whitev] {};
        \node (12) at (1,1) [blackv] {};
        \node (123) at (2,2) [blackv] {};
        \node (23) at (3,1) [whitev] {};
        \node (3) at (4,0) [whitev] {};

      \end{tikzpicture},
      \begin{tikzpicture}[scale=0.2, every node/.style={scale=0.5}]
        \node (1) at (0,0) [whitev] {};
        \node (2) at (2,0) [whitev] {};
        \node (12) at (1,1) [whitev] {};
        \node (123) at (2,2) [blackv] {};
        \node (23) at (3,1) [whitev] {};
        \node (3) at (4,0) [blackv] {};

      \end{tikzpicture}
      \\ \hline
      \begin{tikzpicture}[scale=0.2, every node/.style={scale=0.5}]
        \node (1) at (0,0) [whitev] {};
        \node (2) at (2,0) [blackv] {};
        \node (12) at (1,1) [blackv] {};
        \node (123) at (2,2) [whitev] {};
        \node (23) at (3,1) [whitev] {};
        \node (3) at (4,0) [whitev] {};

      \end{tikzpicture}
       & 
      \begin{tikzpicture}[scale=0.2, every node/.style={scale=0.5}]
        \node (1) at (0,0) [whitev] {};
        \node (2) at (2,0) [blackv] {};
        \node (12) at (1,1) [blackv] {};
        \node (123) at (2,2) [whitev] {};
        \node (23) at (3,1) [whitev] {};
        \node (3) at (4,0) [whitev] {};

      \end{tikzpicture},
      \begin{tikzpicture}[scale=0.2, every node/.style={scale=0.5}]
        \node (1) at (0,0) [whitev] {};
        \node (2) at (2,0) [blackv] {};
        \node (12) at (1,1) [blackv] {};
        \node (123) at (2,2) [whitev] {};
        \node (23) at (3,1) [whitev] {};
        \node (3) at (4,0) [whitev] {};

      \end{tikzpicture}
      &
      \begin{tikzpicture}[scale=0.2, every node/.style={scale=0.5}]
        \node (1) at (0,0) [whitev] {};
        \node (2) at (2,0) [blackv] {};
        \node (12) at (1,1) [whitev] {};
        \node (123) at (2,2) [blackv] {};
        \node (23) at (3,1) [whitev] {};
        \node (3) at (4,0) [whitev] {};

      \end{tikzpicture}
       & 
      \begin{tikzpicture}[scale=0.2, every node/.style={scale=0.5}]
        \node (1) at (0,0) [whitev] {};
        \node (2) at (2,0) [blackv] {};
        \node (12) at (1,1) [whitev] {};
        \node (123) at (2,2) [blackv] {};
        \node (23) at (3,1) [whitev] {};
        \node (3) at (4,0) [whitev] {};

      \end{tikzpicture},
      \begin{tikzpicture}[scale=0.2, every node/.style={scale=0.5}]
        \node (1) at (0,0) [whitev] {};
        \node (2) at (2,0) [blackv] {};
        \node (12) at (1,1) [whitev] {};
        \node (123) at (2,2) [blackv] {};
        \node (23) at (3,1) [whitev] {};
        \node (3) at (4,0) [whitev] {};

      \end{tikzpicture}
      \\ \hline
      \begin{tikzpicture}[scale=0.2, every node/.style={scale=0.5}]
        \node (1) at (0,0) [whitev] {};
        \node (2) at (2,0) [whitev] {};
        \node (12) at (1,1) [whitev] {};
        \node (123) at (2,2) [blackv] {};
        \node (23) at (3,1) [blackv] {};
        \node (3) at (4,0) [whitev] {};

      \end{tikzpicture}
       & 
      \begin{tikzpicture}[scale=0.2, every node/.style={scale=0.5}]
        \node (1) at (0,0) [whitev] {};
        \node (2) at (2,0) [whitev] {};
        \node (12) at (1,1) [whitev] {};
        \node (123) at (2,2) [blackv] {};
        \node (23) at (3,1) [blackv] {};
        \node (3) at (4,0) [whitev] {};

      \end{tikzpicture},
      \begin{tikzpicture}[scale=0.2, every node/.style={scale=0.5}]
        \node (1) at (0,0) [whitev] {};
        \node (2) at (2,0) [whitev] {};
        \node (12) at (1,1) [whitev] {};
        \node (123) at (2,2) [blackv] {};
        \node (23) at (3,1) [blackv] {};
        \node (3) at (4,0) [whitev] {};

      \end{tikzpicture}
      &
      \begin{tikzpicture}[scale=0.2, every node/.style={scale=0.5}]
        \node (1) at (0,0) [whitev] {};
        \node (2) at (2,0) [whitev] {};
        \node (12) at (1,1) [whitev] {};
        \node (123) at (2,2) [blackv] {};
        \node (23) at (3,1) [whitev] {};
        \node (3) at (4,0) [blackv] {};

      \end{tikzpicture}
       & 
      \begin{tikzpicture}[scale=0.2, every node/.style={scale=0.5}]
        \node (1) at (0,0) [whitev] {};
        \node (2) at (2,0) [whitev] {};
        \node (12) at (1,1) [whitev] {};
        \node (123) at (2,2) [blackv] {};
        \node (23) at (3,1) [whitev] {};
        \node (3) at (4,0) [blackv] {};

      \end{tikzpicture},
      \begin{tikzpicture}[scale=0.2, every node/.style={scale=0.5}]
        \node (1) at (0,0) [whitev] {};
        \node (2) at (2,0) [whitev] {};
        \node (12) at (1,1) [whitev] {};
        \node (123) at (2,2) [blackv] {};
        \node (23) at (3,1) [whitev] {};
        \node (3) at (4,0) [blackv] {};

      \end{tikzpicture}
      \\ \hline
      \begin{tikzpicture}[scale=0.2, every node/.style={scale=0.5}]
        \node (1) at (0,0) [whitev] {};
        \node (2) at (2,0) [blackv] {};
        \node (12) at (1,1) [whitev] {};
        \node (123) at (2,2) [whitev] {};
        \node (23) at (3,1) [blackv] {};
        \node (3) at (4,0) [whitev] {};

      \end{tikzpicture}
       & 
      \begin{tikzpicture}[scale=0.2, every node/.style={scale=0.5}]
        \node (1) at (0,0) [whitev] {};
        \node (2) at (2,0) [blackv] {};
        \node (12) at (1,1) [whitev] {};
        \node (123) at (2,2) [whitev] {};
        \node (23) at (3,1) [blackv] {};
        \node (3) at (4,0) [whitev] {};

      \end{tikzpicture},
      \begin{tikzpicture}[scale=0.2, every node/.style={scale=0.5}]
        \node (1) at (0,0) [whitev] {};
        \node (2) at (2,0) [blackv] {};
        \node (12) at (1,1) [whitev] {};
        \node (123) at (2,2) [whitev] {};
        \node (23) at (3,1) [blackv] {};
        \node (3) at (4,0) [whitev] {};

      \end{tikzpicture}
      &
      \begin{tikzpicture}[scale=0.2, every node/.style={scale=0.5}]
        \node (1) at (0,0) [whitev] {};
        \node (2) at (2,0) [blackv] {};
        \node (12) at (1,1) [whitev] {};
        \node (123) at (2,2) [whitev] {};
        \node (23) at (3,1) [blackv] {};
        \node (3) at (4,0) [blackv] {};

      \end{tikzpicture}
       & 
      \begin{tikzpicture}[scale=0.2, every node/.style={scale=0.5}]
        \node (1) at (0,0) [whitev] {};
        \node (2) at (2,0) [blackv] {};
        \node (12) at (1,1) [whitev] {};
        \node (123) at (2,2) [whitev] {};
        \node (23) at (3,1) [blackv] {};
        \node (3) at (4,0) [whitev] {};

      \end{tikzpicture},
      \begin{tikzpicture}[scale=0.2, every node/.style={scale=0.5}]
        \node (1) at (0,0) [whitev] {};
        \node (2) at (2,0) [whitev] {};
        \node (12) at (1,1) [whitev] {};
        \node (123) at (2,2) [whitev] {};
        \node (23) at (3,1) [blackv] {};
        \node (3) at (4,0) [blackv] {};

      \end{tikzpicture}
      \\ \hline
      \begin{tikzpicture}[scale=0.2, every node/.style={scale=0.5}]
        \node (1) at (0,0) [whitev] {};
        \node (2) at (2,0) [whitev] {};
        \node (12) at (1,1) [whitev] {};
        \node (123) at (2,2) [whitev] {};
        \node (23) at (3,1) [blackv] {};
        \node (3) at (4,0) [blackv] {};

      \end{tikzpicture}
       & 
      \begin{tikzpicture}[scale=0.2, every node/.style={scale=0.5}]
        \node (1) at (0,0) [whitev] {};
        \node (2) at (2,0) [whitev] {};
        \node (12) at (1,1) [whitev] {};
        \node (123) at (2,2) [whitev] {};
        \node (23) at (3,1) [blackv] {};
        \node (3) at (4,0) [blackv] {};

      \end{tikzpicture},
      \begin{tikzpicture}[scale=0.2, every node/.style={scale=0.5}]
        \node (1) at (0,0) [whitev] {};
        \node (2) at (2,0) [whitev] {};
        \node (12) at (1,1) [whitev] {};
        \node (123) at (2,2) [whitev] {};
        \node (23) at (3,1) [blackv] {};
        \node (3) at (4,0) [blackv] {};

      \end{tikzpicture}
      &
      \begin{tikzpicture}[scale=0.2, every node/.style={scale=0.5}]
        \node (1) at (0,0) [blackv] {};
        \node (2) at (2,0) [whitev] {};
        \node (12) at (1,1) [whitev] {};
        \node (123) at (2,2) [whitev] {};
        \node (23) at (3,1) [whitev] {};
        \node (3) at (4,0) [whitev] {};

      \end{tikzpicture}
       & 
      \begin{tikzpicture}[scale=0.2, every node/.style={scale=0.5}]
        \node (1) at (0,0) [blackv] {};
        \node (2) at (2,0) [whitev] {};
        \node (12) at (1,1) [whitev] {};
        \node (123) at (2,2) [whitev] {};
        \node (23) at (3,1) [whitev] {};
        \node (3) at (4,0) [whitev] {};

      \end{tikzpicture},
      \begin{tikzpicture}[scale=0.2, every node/.style={scale=0.5}]
        \node (1) at (0,0) [blackv] {};
        \node (2) at (2,0) [whitev] {};
        \node (12) at (1,1) [whitev] {};
        \node (123) at (2,2) [whitev] {};
        \node (23) at (3,1) [whitev] {};
        \node (3) at (4,0) [whitev] {};

      \end{tikzpicture}
      \\ \hline
      \begin{tikzpicture}[scale=0.2, every node/.style={scale=0.5}]
        \node (1) at (0,0) [whitev] {};
        \node (2) at (2,0) [whitev] {};
        \node (12) at (1,1) [blackv] {};
        \node (123) at (2,2) [whitev] {};
        \node (23) at (3,1) [whitev] {};
        \node (3) at (4,0) [whitev] {};

      \end{tikzpicture}
       & 
      \begin{tikzpicture}[scale=0.2, every node/.style={scale=0.5}]
        \node (1) at (0,0) [whitev] {};
        \node (2) at (2,0) [whitev] {};
        \node (12) at (1,1) [blackv] {};
        \node (123) at (2,2) [whitev] {};
        \node (23) at (3,1) [whitev] {};
        \node (3) at (4,0) [whitev] {};

      \end{tikzpicture},
      \begin{tikzpicture}[scale=0.2, every node/.style={scale=0.5}]
        \node (1) at (0,0) [whitev] {};
        \node (2) at (2,0) [whitev] {};
        \node (12) at (1,1) [blackv] {};
        \node (123) at (2,2) [whitev] {};
        \node (23) at (3,1) [whitev] {};
        \node (3) at (4,0) [whitev] {};

      \end{tikzpicture}
      &
      \begin{tikzpicture}[scale=0.2, every node/.style={scale=0.5}]
        \node (1) at (0,0) [whitev] {};
        \node (2) at (2,0) [whitev] {};
        \node (12) at (1,1) [whitev] {};
        \node (123) at (2,2) [blackv] {};
        \node (23) at (3,1) [whitev] {};
        \node (3) at (4,0) [whitev] {};

      \end{tikzpicture}
       & 
      \begin{tikzpicture}[scale=0.2, every node/.style={scale=0.5}]
        \node (1) at (0,0) [whitev] {};
        \node (2) at (2,0) [whitev] {};
        \node (12) at (1,1) [whitev] {};
        \node (123) at (2,2) [blackv] {};
        \node (23) at (3,1) [whitev] {};
        \node (3) at (4,0) [whitev] {};

      \end{tikzpicture},
      \begin{tikzpicture}[scale=0.2, every node/.style={scale=0.5}]
        \node (1) at (0,0) [whitev] {};
        \node (2) at (2,0) [whitev] {};
        \node (12) at (1,1) [whitev] {};
        \node (123) at (2,2) [blackv] {};
        \node (23) at (3,1) [whitev] {};
        \node (3) at (4,0) [whitev] {};

      \end{tikzpicture}
      \\ \hline
      \begin{tikzpicture}[scale=0.2, every node/.style={scale=0.5}]
        \node (1) at (0,0) [whitev] {};
        \node (2) at (2,0) [blackv] {};
        \node (12) at (1,1) [whitev] {};
        \node (123) at (2,2) [whitev] {};
        \node (23) at (3,1) [whitev] {};
        \node (3) at (4,0) [whitev] {};

      \end{tikzpicture}
       & 
      \begin{tikzpicture}[scale=0.2, every node/.style={scale=0.5}]
        \node (1) at (0,0) [whitev] {};
        \node (2) at (2,0) [blackv] {};
        \node (12) at (1,1) [whitev] {};
        \node (123) at (2,2) [whitev] {};
        \node (23) at (3,1) [whitev] {};
        \node (3) at (4,0) [whitev] {};

      \end{tikzpicture},
      \begin{tikzpicture}[scale=0.2, every node/.style={scale=0.5}]
        \node (1) at (0,0) [whitev] {};
        \node (2) at (2,0) [blackv] {};
        \node (12) at (1,1) [whitev] {};
        \node (123) at (2,2) [whitev] {};
        \node (23) at (3,1) [whitev] {};
        \node (3) at (4,0) [whitev] {};

      \end{tikzpicture}
      &
      \begin{tikzpicture}[scale=0.2, every node/.style={scale=0.5}]
        \node (1) at (0,0) [whitev] {};
        \node (2) at (2,0) [whitev] {};
        \node (12) at (1,1) [whitev] {};
        \node (123) at (2,2) [whitev] {};
        \node (23) at (3,1) [blackv] {};
        \node (3) at (4,0) [whitev] {};

      \end{tikzpicture}
       & 
      \begin{tikzpicture}[scale=0.2, every node/.style={scale=0.5}]
        \node (1) at (0,0) [whitev] {};
        \node (2) at (2,0) [whitev] {};
        \node (12) at (1,1) [whitev] {};
        \node (123) at (2,2) [whitev] {};
        \node (23) at (3,1) [blackv] {};
        \node (3) at (4,0) [whitev] {};

      \end{tikzpicture},
      \begin{tikzpicture}[scale=0.2, every node/.style={scale=0.5}]
        \node (1) at (0,0) [whitev] {};
        \node (2) at (2,0) [whitev] {};
        \node (12) at (1,1) [whitev] {};
        \node (123) at (2,2) [whitev] {};
        \node (23) at (3,1) [blackv] {};
        \node (3) at (4,0) [whitev] {};

      \end{tikzpicture}
      \\ \hline
      \begin{tikzpicture}[scale=0.2, every node/.style={scale=0.5}]
        \node (1) at (0,0) [whitev] {};
        \node (2) at (2,0) [whitev] {};
        \node (12) at (1,1) [whitev] {};
        \node (123) at (2,2) [whitev] {};
        \node (23) at (3,1) [whitev] {};
        \node (3) at (4,0) [blackv] {};

      \end{tikzpicture}
       & 
      \begin{tikzpicture}[scale=0.2, every node/.style={scale=0.5}]
        \node (1) at (0,0) [whitev] {};
        \node (2) at (2,0) [whitev] {};
        \node (12) at (1,1) [whitev] {};
        \node (123) at (2,2) [whitev] {};
        \node (23) at (3,1) [whitev] {};
        \node (3) at (4,0) [blackv] {};

      \end{tikzpicture},
      \begin{tikzpicture}[scale=0.2, every node/.style={scale=0.5}]
        \node (1) at (0,0) [whitev] {};
        \node (2) at (2,0) [whitev] {};
        \node (12) at (1,1) [whitev] {};
        \node (123) at (2,2) [whitev] {};
        \node (23) at (3,1) [whitev] {};
        \node (3) at (4,0) [blackv] {};

      \end{tikzpicture}
      &
      \begin{tikzpicture}[scale=0.2, every node/.style={scale=0.5}]
        \node (1) at (0,0) [whitev] {};
        \node (2) at (2,0) [whitev] {};
        \node (12) at (1,1) [whitev] {};
        \node (123) at (2,2) [whitev] {};
        \node (23) at (3,1) [whitev] {};
        \node (3) at (4,0) [whitev] {};

      \end{tikzpicture}
       & 
      \begin{tikzpicture}[scale=0.2, every node/.style={scale=0.5}]
        \node (1) at (0,0) [whitev] {};
        \node (2) at (2,0) [whitev] {};
        \node (12) at (1,1) [whitev] {};
        \node (123) at (2,2) [whitev] {};
        \node (23) at (3,1) [whitev] {};
        \node (3) at (4,0) [whitev] {};

      \end{tikzpicture},
      \begin{tikzpicture}[scale=0.2, every node/.style={scale=0.5}]
        \node (1) at (0,0) [whitev] {};
        \node (2) at (2,0) [whitev] {};
        \node (12) at (1,1) [whitev] {};
        \node (123) at (2,2) [whitev] {};
        \node (23) at (3,1) [whitev] {};
        \node (3) at (4,0) [whitev] {};

      \end{tikzpicture}
      \\ \hline
      \caption{IE-closed subcategories $\CC$ and twin rigid modules $(P,I)$}
      \label{tab:ie}
      \end{longtable}
\end{example}

A subcategory of an abelian category which is closed under images, cokernels, and extensions are called \emph{ICE-closed}, and it has been recently studied by the authors (\cite{Eno, ES}).
While the study of IE-closed subcategories is partially motivated by that of ICE-closed subcategories, IE-closed subcategories behave differently from ICE-closed subcategories as follows.
\begin{remark}
For a Dynkin quiver $Q$, the number of ICE-closed subcategories of $\mod kQ$ depends only on the underlying graph of $Q$, not on the orientation of $Q$ (\cite[Theorem C]{Eno}).
However, the number of IE-closed subcategories depends on the orientation.
Indeed, there are 34 IE-closed subcategories in the previous example, while there are 35 for $Q\colon 1\to 2\ot 3$.
\end{remark}

\begin{remark}
In \cite{Eno2}, the first author studied the notion of a \emph{torsion heart}, which is a subcategory of $\mod\Lambda$ of the form $\UU^{\perp}\cap\TT$ for two torsion classes $\UU\subseteq\TT$ in $\mod\Lambda$ (here $\UU^{\perp} = \{X \in \mod\Lambda \mid \Hom_\Lambda(\UU, X)=0 \}$).
Since $\UU^\perp$ is a torsion-free class, every torsion heart is IE-closed.
However, the converse does not hold in general.
For example, we can check that an IE-closed subcategory $\add(\sst{1}\oplus\sst{3 \\ 2 \\ 1}\oplus\sst{3})$ in Example \ref{ex:ex} is not a torsion heart.
This is in contrast to ICE-closed subcategories, because every ICE-closed subcategories of $\mod\Lambda$ is a torsion heart by \cite[Proposition 3.1]{ES}.
\end{remark}

\appendix

\section{Tilting exchange sequences}\label{sec:app}
In this section, we collect basic facts about tilting modules, which are mainly used in Section~\ref{sec:comandmut}. Throughout this section, $\Lambda$ denotes an artin algebra.

\begin{definition}
  Let $T \in \mod\Lambda$.
  \begin{enumerate}
    \item $T$ is \emph{rigid} if $\Ext_\Lambda^1(T, T) = 0$ holds.
    \item $T$ is \emph{partial tilting} if $T$ is rigid and $\pd_\Lambda T \leq 1$, that is, there is a short exact sequence
    \[
      \begin{tikzcd}
        0 \rar & P_1 \rar & P_0 \rar & T \rar & 0
      \end{tikzcd}
    \]
    in $\mod\Lambda$ with $P_0,P_1 \in \proj \Lambda$.
    \item $T$ is \emph{tilting} if $T$ is partial tilting and there is a short exact sequence
    \[
      \begin{tikzcd}
        0 \rar & \Lambda \rar & T^0 \rar & T^1 \rar & 0
      \end{tikzcd}
    \]
    in $\mod\Lambda$ with $T^0, T^1 \in \add T$.
  \end{enumerate}
\end{definition}

\begin{proposition}\label{prop:tilt-comp-BB}
  Let $M \in \mod\Lambda$. Then the following statements hold.
  \begin{enumerate}
    \item Suppose that $M$ is partial tilting. Then $|M| \leq |\Lambda|$ holds, and $M$ is tilting if and only if $|M| = |\Lambda|$ holds. Also, there is some $E \in \mod\Lambda$ such that $E \oplus M$ is tilting (called the Bongartz completion) and that there is a short exact sequence
    \[
      \begin{tikzcd}
        0 \rar & \Lambda \rar & E \rar & M_0 \rar & 0
      \end{tikzcd}
    \]
    with $M_0 \in \add M$.
    \item Suppose that $M$ is tilting. Then $\Fac M$ has an $\Ext$-progenerator $M$. Moreover, put $\Gamma := \End_\Lambda M$. Then $_\Gamma M$ is a tilting left $\Gamma$-module, and $\Hom_\Lambda(M,-) \colon \mod\Lambda \to \mod\Gamma$ induces an exact equivalence $\Fac M \equi \Sub (DM)$ for a torsion-free class $\Sub (DM)$ in $\mod\Gamma$.
  \end{enumerate}
\end{proposition}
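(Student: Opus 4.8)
The plan is to recognize both parts as standard facts of tilting theory over artin algebras and to organize the argument around the two classical pillars: the Bongartz completion for (1) and the Brenner--Butler theorem for (2). For the numerical bound in (1), I would start from minimal projective presentations $0 \to P_1^{(i)} \to P_0^{(i)} \to M_i \to 0$ of the pairwise non-isomorphic indecomposable summands $M_i$ of $M$, and observe that rigidity forces the classes $[P_0^{(i)}] - [P_1^{(i)}]$ in $K_0(\proj\Lambda) \iso \Z^{|\Lambda|}$ to be linearly independent. Concretely, one evaluates the Euler form $\langle M_i, M_j\rangle = \length_R\Hom_\Lambda(M_i,M_j) - \length_R\Ext_\Lambda^1(M_i,M_j)$ and uses $\Ext_\Lambda^1(M,M)=0$ to see that its Gram matrix is nonsingular, giving $|M| \leq |\Lambda|$. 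The same bookkeeping shows that equality $|M| = |\Lambda|$ is equivalent to the existence of a sequence $0 \to \Lambda \to T^0 \to T^1 \to 0$ with $T^0,T^1 \in \add M$, i.e. to $M$ being tilting.

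For the Bongartz completion I would run the universal extension construction: regard $\Ext_\Lambda^1(M,\Lambda)$ as a finitely generated right $\End_\Lambda(M)$-module, choose a finite generating set, and assemble the corresponding extensions into a single short exact sequence
\[
\begin{tikzcd}
  0 \rar & \Lambda \rar & E \rar & M_0 \rar & 0
\end{tikzcd}
\]
with $M_0 \in \add M$ whose connecting map $\Hom_\Lambda(M,M_0) \to \Ext_\Lambda^1(M,\Lambda)$ is surjective. Applying $\Hom_\Lambda(M,-)$ and $\Hom_\Lambda(-,E\oplus M)$ to this sequence and chasing the resulting long exact sequences then yields $\Ext_\Lambda^1(E\oplus M, E\oplus M)=0$ and $\pd_\Lambda(E\oplus M)\leq 1$; together with the first part this gives that $E\oplus M$ is tilting.

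For (2), the Ext-progenerator claim is elementary once one knows $\Fac M = \{X \mid \Ext_\Lambda^1(M,X)=0\}$, which holds because $M$ is tilting: this exhibits $\Fac M$ as a torsion class in which $M$ is Ext-projective, every object is covered by some $M^k \defl X$, and the kernel of such a cover again lies in $\Fac M$, so $M$ is an Ext-progenerator. The remaining assertions constitute the Brenner--Butler theorem, which I would invoke in the form that $_\Gamma M$ is a tilting left $\Gamma$-module and that $\Hom_\Lambda(M,-)$, with quasi-inverse $-\otimes_\Gamma M$, restricts to mutually inverse equivalences between the torsion class $\Fac M$ and the torsion-free class $\mathcal{Y} = \{Y \in \mod\Gamma \mid \Tor_1^\Gamma(Y,M)=0\}$ of the induced torsion pair in $\mod\Gamma$. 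The one point needing a short separate verification is the identification $\mathcal{Y} = \Sub(DM)$ as subcategories of $\mod\Gamma$, which I would extract from the standard description of the Brenner--Butler torsion pair together with the natural isomorphism $\Hom_\Lambda(M,D\Lambda) \iso DM$ of right $\Gamma$-modules.

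I do not expect a conceptual obstacle, since everything reduces to classical results for which I would rely on \cite{ASS, ARS}; the genuine care is in two places. First, the rigidity verification of the Bongartz complement $E\oplus M$ requires tracking several long exact sequences simultaneously, and over an artin algebra the numerical arguments must be phrased via $\length_R$ rather than vector-space dimension. Second, the precise matching $\mathcal{Y}=\Sub(DM)$ is the piece where left/right and dualization conventions have to be pinned down; this is the step I would write out most carefully, leaving the purely formal verifications to the cited references.
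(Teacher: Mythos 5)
The paper itself gives no proof of this proposition: the appendix explicitly only ``collects basic facts'' from classical tilting theory (Bongartz's lemma and the Brenner--Butler theorem), so your proposal is being measured against the standard literature rather than an in-paper argument. Against that benchmark, most of your sketch is the correct classical route: the universal-extension construction of the Bongartz completion is right, as is the treatment of (2) -- the characterization $\Fac M=\{X\mid \Ext^1_\Lambda(M,X)=0\}$ for tilting $M$, the $\Ext$-progenerator argument via approximations $M^k\defl X$ with kernel killed by $\Ext^1_\Lambda(M,-)$, and the identification of the Brenner--Butler torsion-free class with $\Sub(DM)$ via $\Hom_\Lambda(M,D\Lambda)\iso DM$, where you correctly isolate the convention-sensitive step.

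The genuine gap is in the numerical part of (1). You claim that $\Ext^1_\Lambda(M,M)=0$ makes the Gram matrix of the Euler form nonsingular; but rigidity only identifies that matrix with $(\length_R\Hom_\Lambda(M_i,M_j))_{i,j}$, and nonsingularity of the Hom-length matrix is not a formal consequence of anything you have written -- linear independence of the classes $[P_0^{(i)}]-[P_1^{(i)}]$ for a rigid module of projective dimension at most one is a real theorem (see \cite[Theorem 5.1]{AIR} for the $\tau$-rigid generalization), not Euler-form bookkeeping. Similarly, ``the same bookkeeping'' cannot show that $|M|=|\Lambda|$ implies $M$ is tilting: an identity in $K_0$ never produces the required coresolution $0\to\Lambda\to T^0\to T^1\to 0$ with terms in $\add M$. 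The standard repair reverses your order of argument. First build the Bongartz completion $E\oplus M$ (your second paragraph, which is fine and needs no numerical input). Then prove that a basic tilting module $T$ has exactly $|\Lambda|$ indecomposable summands using the machinery of your part (2): $\Hom_\Lambda(T,-)$ restricts to an equivalence $\add T\equi\proj\Gamma$, so $|T|=|\Gamma|$ equals the number of simple $\Gamma$-modules, and the Brenner--Butler theorem yields $\rank K_0(\Gamma)=\rank K_0(\Lambda)$ (see \cite[Chapter VI]{ASS}). This gives $|M|\leq|E\oplus M|=|\Lambda|$, and if $|M|=|\Lambda|$ then $E\in\add M$, so $M$ is itself tilting. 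With that reordering, every step of your proposal is a correct classical argument; as written, the independence and equality claims rest on an unsubstantiated nonsingularity assertion.
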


\begin{definition}
  A $\Lambda$-module $M \in \mod\Lambda$ is called \emph{almost complete tilting} if $M$ is partial tilting and $|M| = |\Lambda| - 1$ holds. In this case, an \emph{indecomposable} module $X$ such that $X \oplus M$ is tilting is called a \emph{tilting complement of $M$}.
\end{definition}

An almost complete tilting module $M$ always has a complement $X$ such that $X \oplus M$ is the Bongartz completion, and we call it the \emph{Bongartz complement}.
It is known that the Bongartz complement and the other complement (if exists) is related by the following special short exact sequence.
\begin{definition}
  A \emph{tilting exchange sequence} is a short exact sequence
  \begin{equation}\label{eq:tilt-exchange-seq}
    \begin{tikzcd}
      0 \rar & X \rar["f"] & \widetilde{M} \rar["g"] & Y \rar & 0
    \end{tikzcd}
  \end{equation}
  satisfying the following conditions:
  \begin{enumerate}
    \item $\widetilde{M} \in \add M$ for an almost complete tilting module $M$.
    \item $X$ and $Y$ are two tilting complements of $M$ (in particular, they are indecomposable).
  \end{enumerate}
  In this case, $f$ (resp. $g$) is automatically a left (resp. right) minimal $\add M$-approximation, and we call \eqref{eq:tilt-exchange-seq} a \emph{tilting exchange sequence from $X \oplus M$ to $Y \oplus M$}. 
\end{definition}

\begin{theorem}[{\cite[Proposition 1.3]{RS}}]\label{thm:exists-exchange-seq}
  Let $M$ be an almost complete tilting $\Lambda$-module. Then the following holds.
  \begin{enumerate}
    \item There are at most two non-isomorphic tilting complements of $M$.
    \item Suppose that there are two non-isomorphic tilting complements $X$ and $Y$. Then $X$ is the Bongartz complement of $M$ if and only if $Y \in \Fac M$ if and only if there is a tilting exchange sequence \eqref{eq:tilt-exchange-seq} from $X \oplus M$ to $Y \oplus M$.
  \end{enumerate}
\end{theorem}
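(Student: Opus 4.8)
The plan is to organize everything around the Bongartz completion. First I would invoke Proposition~\ref{prop:tilt-comp-BB}~(1) to produce the Bongartz completion $T = X \oplus M$ of the almost complete tilting module $M$, where $X$ is indecomposable because $|T| = |\Lambda| = |M| + 1$. The decisive property I would extract from the defining sequence $0 \to \Lambda \to E \to M_0 \to 0$ is that $\Fac T = M^{\perp} := \{N \in \mod\Lambda \mid \Ext_\Lambda^1(M,N) = 0\}$; this singles out $X$ as the maximal complement and, crucially, forces every complement to lie in $\Fac T$. Indeed, if $Y \oplus M$ is tilting then it is rigid, so $\Ext_\Lambda^1(M,Y) = 0$, i.e. $Y \in M^{\perp} = \Fac T$.

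Next I would construct the second complement. Taking a minimal left $\add M$-approximation $f \colon X \to \widetilde{M}$, I would set $Y := \coker f$ in the case that $f$ is a monomorphism, producing the short exact sequence \eqref{eq:tilt-exchange-seq}. The technical heart is to check that $Y \oplus M$ is again tilting: the bound $\pd_\Lambda Y \le 1$ is immediate from the sequence, while rigidity of $Y \oplus M$ (the vanishing of $\Ext_\Lambda^1(Y,M)$, $\Ext_\Lambda^1(M,Y)$ and $\Ext_\Lambda^1(Y,Y)$) I would obtain by applying $\Hom_\Lambda(-,M)$, $\Hom_\Lambda(M,-)$ and $\Hom_\Lambda(-,Y)$ to \eqref{eq:tilt-exchange-seq} and combining three ingredients: the approximation property of $f$, the rigidity of $T = X \oplus M$, and $\pd_\Lambda M \le 1$ (which kills the relevant $\Ext^2$ terms). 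Indecomposability of $Y$ follows from left minimality of $f$, and $Y \notin \add M$ gives $|Y \oplus M| = |\Lambda|$, so $Y \oplus M$ is tilting by the numerical criterion in Proposition~\ref{prop:tilt-comp-BB}~(1). Since $\widetilde{M} \in \add M$ surjects onto $Y$, this simultaneously records $Y \in \Fac M$.

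For the uniqueness half---hence the bound of \emph{at most two} complements---I would take an arbitrary complement $Y'$ with $Y' \not\iso X$ and aim to show $Y' \iso Y$. Since $X$ realizes the maximal torsion class $\Fac(X \oplus M) = M^{\perp}$ and $Y' \not\iso X$, we have $\Fac(Y' \oplus M) \subsetneq M^{\perp}$, whence $X \notin \Fac(Y' \oplus M)$. I would use this to show that $Y' \in \Fac M$ and that the minimal right $\add M$-approximation of $Y'$ is a surjection whose kernel is an indecomposable complement of $M$; as the only complement outside $\Fac M$ is the Bongartz complement $X$, this kernel must be $X$, and the resulting exact sequence is \eqref{eq:tilt-exchange-seq}, forcing $Y' \iso \coker f = Y$. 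Assembling these steps yields part~(1) together with all three equivalences in part~(2): among the at most two complements, exactly one realizes $\Fac T = M^{\perp}$ (namely $X$), the other lies in $\Fac M$, and the two are linked precisely by the exchange sequence \eqref{eq:tilt-exchange-seq}.

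I expect the main obstacle to be the implication ``$Y' \in \Fac T \Rightarrow Y' \in \Fac M$'' for a non-Bongartz complement, since this is exactly where the dichotomy between one and two complements is located and where the minimality of the approximation must be used with care. A clean way to sidestep the delicate extension bookkeeping is to transport the whole problem through the Brenner--Butler equivalence $\Hom_\Lambda(T,-) \colon \Fac T \equi \YY$ onto a torsion-free class $\YY \subseteq \mod\Gamma$ with $\Gamma = \End_\Lambda T$ (Proposition~\ref{prop:tilt-comp-BB}~(2)): under it $\add T$ corresponds to $\proj\Gamma$, so $M$ becomes an almost complete \emph{projective} $\Gamma$-module and completing a projective to a tilting module makes both the existence of the exchange sequence and the count of complements transparent.
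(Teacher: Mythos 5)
First, a framing remark: the paper itself does not prove Theorem~\ref{thm:exists-exchange-seq}; it is quoted verbatim from \cite[Proposition 1.3]{RS}. So your attempt is measured against the classical argument, not against anything in the text. Your skeleton is the standard one and much of it is sound: the identification $\Fac T = \{N \mid \Ext^1_\Lambda(M,N)=0\}$ for the Bongartz completion $T = X \oplus M$ (correctly extracted from the sequence $0 \to \Lambda \to E \to M_0 \to 0$), the observation that rigidity forces every complement into $\Fac T$, and the $\Ext$-bookkeeping for rigidity of $\coker f \oplus M$ all check out.

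There are, however, two genuine gaps. The first is your claim that $\pd_\Lambda \coker f \leq 1$ ``is immediate from the sequence'' $0 \to X \to \widetilde{M} \to Y \to 0$. It is not: with $\pd_\Lambda X, \pd_\Lambda \widetilde{M} \leq 1$ the long exact sequence only gives $\pd_\Lambda Y \leq 2$, since $\Ext^2_\Lambda(Y,N)$ is the cokernel of $\Ext^1_\Lambda(\widetilde{M},N) \to \Ext^1_\Lambda(X,N)$, whose vanishing is not formal. This matters in exactly the generality at stake: the theorem is stated for arbitrary artin algebras and is applied in the body of the paper to $\Gamma_P = \End_\Lambda(P)$, which is not hereditary in general; tellingly, the paper's own Proposition~\ref{prop:tilt-mutation} lists ``$\pd_\Lambda \coker f \leq 1$'' as an explicit \emph{hypothesis}, and Proposition~\ref{prop:leftmutation} verifies it separately via Proposition~\ref{prop:ice}~(5). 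The second gap is the uniqueness step, which is the real content of part (1): your appeal to ``the only complement outside $\Fac M$ is the Bongartz complement $X$'' presupposes part of the dichotomy being proven, and the claim ``$Y' \in \Fac M$'' for a non-Bongartz complement (which you yourself flag as the main obstacle) is left without an argument --- classically it is obtained by showing that a non-split extension $0 \to X \to E \to Y' \to 0$ has middle term in $\add M$, which is where the counting in \cite{RS} does its work. Both gaps can be repaired by running the argument in the right-approximation direction, where the projective-dimension bound is free because $Y'$ is \emph{given} to be a complement: for the kernel $K$ of a minimal right $\add M$-approximation $M' \defl Y'$, applying $\Hom_\Lambda(-,N)$ with $N \in \Fac T$ gives $\Ext^1_\Lambda(K,N) = 0$ (using $\Ext^2_\Lambda(Y',N) = 0$ from $\pd_\Lambda Y' \leq 1$), so $K$ is $\Ext$-projective in the torsion class $\Fac T$, hence $K \in \add(X \oplus M)$, and minimality plus a count forces $K \iso X$; this identifies $X$ intrinsically and yields uniqueness without circularity. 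Finally, the Brenner--Butler ``shortcut'' in your last paragraph does not sidestep the difficulty: transporting tilting complements of $M$ to tilting complements of the projective $\Gamma$-module $\Hom_\Lambda(T,M)$ requires a two-term $\add T$-resolution of the second tilting module and preservation of the pd-bound, which over a non-hereditary algebra is again non-formal --- the paper builds precisely this machinery in Proposition~\ref{prop:tilting}, but only for hereditary $\Lambda$.
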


On the other hand, there is a natural poset structure on the set of tilting modules, which is induced from torsion classes.
\begin{definition}\label{def:tilt}
  Let $\tilt\Lambda$ denote the set of isomorphism classes of basic tilting $\Lambda$-modules.
  Define a partial order on $\tilt\Lambda$ by $T \geq T'$ if $\Fac T \supseteq \Fac T'$.
  Denote by $\HH(\tilt\Lambda)$ the \emph{Hasse quiver} of $\tilt\Lambda$, that is, we draw an arrow $T \to T'$ if $T > T'$ and there is no $T'' \in \tilt\Lambda$ satisfying $T > T'' > T'$.
\end{definition}
Then the following relation between tilting exchange sequences and the quiver $\HH(\tilt\Lambda)$ is known.
\begin{theorem}\label{thm:hasse}
  The following are equivalent for $T, T' \in \tilt\Lambda$.
  \begin{enumerate}
    \item There is an arrow $T \to T'$ in $\HH(\tilt\Lambda)$.
    \item There is a tilting exchange sequence from $T$ to $T'$.
  \end{enumerate}
\end{theorem}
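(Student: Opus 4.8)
The plan is to derive both implications from the Riedtmann--Schofield exchange theorem (Theorem~\ref{thm:exists-exchange-seq}) and the Bongartz completion (Proposition~\ref{prop:tilt-comp-BB}), using repeatedly that the indecomposable $\Ext$-projectives of a functorially finite torsion class $\Fac T$ are exactly the indecomposable summands of $T$. Write $n := |\Lambda|$, so every basic tilting module has $n$ indecomposable summands. For (2) $\Rightarrow$ (1), suppose a tilting exchange sequence $0 \to X \to \widetilde{M} \to Y \to 0$ from $T = X \oplus M$ to $T' = Y \oplus M$ is given. By Theorem~\ref{thm:exists-exchange-seq}~(2) we have $Y \in \Fac M$, so $T' = Y \oplus M \in \Fac M \subseteq \Fac T$; hence $\Fac T' \subseteq \Fac T$, and the inclusion is strict since distinct basic tilting modules have distinct $\Fac$. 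It remains to exclude an intermediate $T'' \in \tilt\Lambda$ with $T \geq T'' \geq T'$. For such a $T''$ we have $M \in \Fac T' \subseteq \Fac T''$, and $M$ is $\Ext$-projective in $\Fac T \supseteq \Fac T''$, hence $\Ext$-projective in $\Fac T''$; therefore $M \in \add T''$, and writing $T'' = Z \oplus M$ with $Z$ indecomposable, $Z$ is a complement of the almost complete tilting module $M$. By Theorem~\ref{thm:exists-exchange-seq}~(1) the only complements are $X$ and $Y$, so $T'' \in \{T, T'\}$, and $T \to T'$ is an arrow of $\HH(\tilt\Lambda)$.

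For (1) $\Rightarrow$ (2) I would first reduce to a combinatorial claim about summands. Observe that any indecomposable summand $X$ of $T$ lying in $\Fac T'$ is $\Ext$-projective in $\Fac T$, hence $\Ext$-projective in $\Fac T' \subseteq \Fac T$, hence a summand of the progenerator $T'$; conversely every common summand lies in $\add T' \subseteq \Fac T'$. Thus the common indecomposable summands of $T$ and $T'$ are precisely the summands of $T$ lying in $\Fac T'$. Granting that $T$ and $T'$ share exactly $n-1$ summands, write $T = X \oplus M$ and $T' = Y \oplus M$ with $M$ the common part and $X,Y$ indecomposable; then $M$ is almost complete tilting with two complements $X, Y$, and since $T > T'$ the module $X$ is the Bongartz complement, so Theorem~\ref{thm:exists-exchange-seq}~(2) supplies a tilting exchange sequence from $T$ to $T'$, as required.

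The main obstacle is therefore the claim that a cover $T > T'$ forces $T$ and $T'$ to differ in exactly one indecomposable summand, equivalently that exactly one summand of $T$ lies outside $\Fac T'$. At least one does, for otherwise $\Fac T \subseteq \Fac T'$. To show at most one I would argue by contradiction: if two distinct summands of $T$ lay outside $\Fac T'$, I would construct a tilting module strictly between $T$ and $T'$, contradicting the covering property. The intended mechanism is a single downward mutation of $T$ toward $T'$: for a summand $X$ outside $\Fac T'$ that is the Bongartz complement of $M := T/X$ (so that $\Fac T = \{\, Z \mid \Ext_\Lambda^1(M,Z) = 0 \,\}$ and the minimal left $\add M$-approximation of $X$ is a monomorphism), Theorem~\ref{thm:exists-exchange-seq} yields an adjacent tilting module $Y \oplus M < T$ that still contains the second outside summand and is hence different from $T'$. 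The delicate points, which constitute the technical heart of the statement (essentially the theorem of Happel--Unger), are to guarantee that such a summand admitting a downward mutation exists and that the resulting $Y \oplus M$ still lies above $T'$; the latter would give $T > Y \oplus M \geq T'$ with $Y \oplus M \neq T'$, contradicting the cover. Verifying these is where I expect the real work to lie, and it is exactly here that the exchange theorem and the description $\Fac T = \{\, Z \mid \Ext_\Lambda^1(M,Z) = 0 \,\}$ are used in an essential way.
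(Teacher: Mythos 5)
Your direction (2) $\Rightarrow$ (1) is complete, correct, and actually more self-contained than the paper's treatment: from the exchange sequence you get $\Fac T' \subsetneq \Fac T$ via Theorem~\ref{thm:exists-exchange-seq}~(2), and your exclusion of an intermediate $T''$ --- using that $M \in \Fac T''$ is $\Ext$-projective in $\Fac T''$, hence a summand of the $\Ext$-progenerator $T''$ of $\Fac T''$ (Proposition~\ref{prop:tilt-comp-BB}~(2)), so that $T''$ is a complement extension of the almost complete tilting module $M$ and Theorem~\ref{thm:exists-exchange-seq}~(1) forces $T'' \in \{T, T'\}$ --- is a clean covering argument that the paper does not spell out.

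The implication (1) $\Rightarrow$ (2), however, contains a genuine gap, which you yourself flag: the claim that a Hasse arrow $T \to T'$ forces $T$ and $T'$ to share exactly $|\Lambda|-1$ indecomposable summands is the entire content of the theorem, and your proposal asserts it rather than proves it. The contradiction mechanism you sketch --- mutate $T$ downward at a summand $X \notin \Fac T'$ and land strictly between $T$ and $T'$ --- hinges on precisely the two points you call ``delicate'': (a) that some such summand admits a downward mutation at all, i.e.\ that the minimal left $\add(T/X)$-approximation of $X$ is injective with cokernel of projective dimension at most one (cf.\ Proposition~\ref{prop:tilt-mutation}), and (b) that the mutated module $Y \oplus M$ still satisfies $Y \oplus M \geq T'$. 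Neither is automatic: (b) can fail for a badly chosen summand, and showing that a good choice exists is exactly the Happel--Unger argument \cite{HU}, which is the technical heart of the statement. The paper sidesteps this entirely: it notes that any support $\tau$-tilting module lying above a tilting module in the support $\tau$-tilting order is itself tilting, so $\HH(\tilt\Lambda)$ is a full subquiver of the Hasse quiver of support $\tau$-tilting modules, and then invokes \cite[Theorem 2.33]{AIR}, which identifies Hasse arrows there with mutations; translating back through Theorem~\ref{thm:exists-exchange-seq} gives the equivalence. To complete your proof you must either carry out the Happel--Unger completion argument in full or, as the paper does, import the mutation theory of \cite{AIR} (or the generalized tilting results of \cite{HU,HU2}) as a black box; as written, the hard implication remains unproved.
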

\begin{proof}
  This is well-known to experts, but we have not been able to find a precise reference, so we give a brief explanation using $\tau$-tilting theory \cite{AIR}.
  If $T \geq T'$ holds for two support $\tau$-tilting modules and $T'$ is tilting, then so is $T$. Therefore $\HH(\tilt\Lambda)$ is a full subquiver of the Hasse quiver of support $\tau$-tilting modules, and by \cite[Theorem 2.33]{AIR}, the condition (1) is equivalent to that $T = M \oplus X$ and $T' = M \oplus Y$ for some almost complete tilting module $M$ with $T$ being the Bongartz completion of $M$.
  Then Theorem \ref{thm:exists-exchange-seq} implies that this is equivalent to (2).
\end{proof}
\begin{remark}
  This can be also deduced from the results of Happel--Unger about generalized tilting modules. In \cite[Theorem 2.1]{HU}, the result analogous of the above theorem for generalized tilting modules is shown. On the other hand, if $T \geq T'$ holds for two generalized tilting modules and $T'$ is tilting, then so is $T$ by \cite[Lemma 2.1]{HU2}. Thus $\HH(\tilt\Lambda)$ is a full subquiver of the Hasse quiver of generalized tilting modules. Hence the above theorem follows.
\end{remark}

The following gives a criterion when there is a tilting exchange sequence from $X \oplus M$ to some $Y \oplus M$. This also seems to be well-known to experts, but we include a proof for the convenience of the reader.
\begin{proposition}\label{prop:tilt-mutation}
  Let $M$ be an almost complete tilting $\Lambda$-module and $X$ a tilting complement of $M$. Then the following conditions are equivalent:
  \begin{enumerate}
    \item There is some tilting complement $Y$ of $M$ together with a tilting exchange sequence from $X \oplus M$ to $Y \oplus M$.
    \item $X \in \Sub M$ holds, and $\pd_\Lambda \coker f \leq 1$ holds for a left minimal $\add M$-approximation $f \colon X \to \widetilde{M}$. 
  \end{enumerate}
  Moreover, in {\upshape (2)}, the following sequence
  \begin{equation}\label{eq:tilt-mut-seq}
    \begin{tikzcd}
      0 \rar & X \rar["f"] & \widetilde{M} \rar & \coker f \rar & 0
    \end{tikzcd}
  \end{equation}
  is a tilting exchange sequence, that is, $\coker f$ is indecomposable and $M \oplus \coker f$ is tilting.
\end{proposition}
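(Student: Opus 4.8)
The plan is to prove the two implications separately and to extract the ``moreover'' claim from the proof of (2) $\Rightarrow$ (1) together with Theorem~\ref{thm:exists-exchange-seq}. The direction (1) $\Rightarrow$ (2) is essentially formal: given a tilting exchange sequence $0 \to X \xrightarrow{f} \widetilde{M} \to Y \to 0$ from $X \oplus M$ to $Y \oplus M$, the map $f$ is by definition a left minimal $\add M$-approximation, so $X \hookrightarrow \widetilde{M} \in \add M$ gives $X \in \Sub M$, while $Y = \coker f$ is a tilting complement and hence partial tilting, so $\pd_\Lambda \coker f \le 1$. Since a left minimal $\add M$-approximation is unique up to isomorphism, this computes the cokernel of \emph{any} such approximation.

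For (2) $\Rightarrow$ (1), I would first record that $f$ is a monomorphism: a monomorphism $X \hookrightarrow M'$ with $M' \in \add M$ (which exists since $X \in \Sub M$) factors through $f$, forcing $f$ to be injective. This yields the short exact sequence $0 \to X \xrightarrow{f} \widetilde{M} \xrightarrow{g} C \to 0$ with $C = \coker f$. The heart of this direction is to show that $M \oplus C$ is a tilting module. I would check the three $\Ext$-vanishings by chasing this sequence: $\Ext_\Lambda^1(C,M) = 0$ comes from the surjectivity of $f^* \colon \Hom_\Lambda(\widetilde{M}, M) \to \Hom_\Lambda(X, M)$ (the approximation property) together with $\Ext_\Lambda^1(\widetilde{M}, M) = 0$; $\Ext_\Lambda^1(M, C) = 0$ is squeezed between $\Ext_\Lambda^1(M, \widetilde{M}) = 0$ and $\Ext_\Lambda^2(M, X) = 0$ (the latter since $\pd_\Lambda M \le 1$); and $\Ext_\Lambda^1(C, C) = 0$ follows from $\Ext_\Lambda^1(C, \widetilde{M}) = 0$ and $\Ext_\Lambda^2(C, X) = 0$ (using $\pd_\Lambda C \le 1$). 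As $\pd_\Lambda(M \oplus C) \le 1$ is part of the hypothesis, $M \oplus C$ is partial tilting.

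Next I would pin down the number of summands. The sequence does not split, since otherwise $X$ would be a summand of $\widetilde{M}$, contradicting $X \notin \add M$; hence $\Ext_\Lambda^1(C, X) \ne 0$, which forces $C$ to have an indecomposable summand $C_0$ that is neither in $\add M$ nor isomorphic to $X$ (every other summand contributes $0$ to $\Ext_\Lambda^1(C,X)$ by rigidity of $X \oplus M$). In particular $C \notin \add M$, so $|M \oplus C| = |M| + 1 = |\Lambda|$ and $M \oplus C$ is tilting by Proposition~\ref{prop:tilt-comp-BB}~(1). Thus $C_0$ is a tilting complement of $M$ distinct from $X$, and since $C_0$ is a summand of the quotient $C$ of $\widetilde{M} \in \add M$, we have $C_0 \in \Fac M$. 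Now Theorem~\ref{thm:exists-exchange-seq}~(2) applies: as the complement $C_0$ lies in $\Fac M$, the module $X$ is the Bongartz complement and there is a tilting exchange sequence from $X \oplus M$ to $C_0 \oplus M$, which is exactly (1).

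Finally, the ``moreover'' part falls out for free: the left map of that tilting exchange sequence is, by definition, a left minimal $\add M$-approximation of $X$, hence isomorphic to our $f$; comparing cokernels gives $\coker f \cong C_0$, which is indecomposable, and $M \oplus \coker f \cong M \oplus C_0$ is tilting. I expect the main obstacle to be the indecomposability of $\coker f$: a direct argument would require controlling both spurious $\add M$-summands and multiplicities via left minimality, which is fiddly. Routing through Theorem~\ref{thm:exists-exchange-seq} and the uniqueness of minimal approximations sidesteps this entirely, at the cost of first establishing that $M \oplus C$ is tilting; the $\Ext$-bookkeeping there is routine but genuinely uses each of the hypotheses $X \in \Sub M$, $\pd_\Lambda \coker f \le 1$, and $\pd_\Lambda M \le 1$.
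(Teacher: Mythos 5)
Your proof is correct and takes essentially the same route as the paper's: show $f$ is injective, verify from the sequence \eqref{eq:tilt-mut-seq} that $M \oplus \coker f$ is partial tilting, deduce from $\coker f \notin \add M$ and $|M \oplus \coker f| = |\Lambda|$ that it is tilting, and then combine $\coker f \in \Fac M$ with Theorem~\ref{thm:exists-exchange-seq} and the uniqueness of minimal left approximations to obtain the exchange sequence and the ``moreover'' part. Your explicit $\Ext$-chases and the careful extraction of the indecomposable summand $C_0$ (distinct from $X$ via $\Ext^1_\Lambda(\coker f, X) \neq 0$) simply spell out what the paper leaves as ``easily checked.''
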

\begin{proof}
  (1) $\Rightarrow$ (2):
  This is clear since the left map of the tilting exchange sequence from $M \oplus X$ to $M \oplus Y$ is a left minimal $\add M$-approximation.

  (2) $\Rightarrow$ (1):
  Let $f \colon X \to \widetilde{M}$ be a left minimal $\add M$-approximation, which is injective by $X \in \Sub M$. Since we assume $\pd_\Lambda \coker f \leq 1$, it is easily checked from \eqref{eq:tilt-mut-seq} that $M \oplus \coker f$ is partial tilting. Also we have $\coker f \not\in \add M$, since otherwise \eqref{eq:tilt-mut-seq} would be split.
  Therefore, $|M \oplus \coker f| > |M|$ holds, and hence $M \oplus \coker f$ is tilting. Now, since $M$ has two tilting complements and $\coker f \in \Fac M$, Theorem \ref{thm:exists-exchange-seq} implies that there is a tilting exchange sequence from $M \oplus X$.
  Moreover, since the left map of the tilting exchange sequence is a left minimal $\add M$-approximation, it should coincide with $f$. Thus \eqref{eq:tilt-mut-seq} is a tilting exchange sequence.
\end{proof}

\begin{ack}
  The authors would like to thank Osamu Iyama for helpful discussions.
  The first author is supported by JSPS KAKENHI Grant Number JP21J00299.
  The second author is supported by JSPS KAKENHI Grant Number JP22J20611.
\end{ack}


\begin{thebibliography}{DIRRT}
  \bibitem[AIR]{AIR}
  T. Adachi, O. Iyama, I. Reiten,
  \emph{$\tau$-tilting theory},
  Compos. Math. 150 (2014), no. 3, 415--452.

  \bibitem[Asa]{Asa}
  S. Asai,
  \emph{Semibricks},
   Int. Math. Res. Not. IMRN 2020, no. 16, 4993--5054.
  
  \bibitem[ASS]{ASS}
  I. Assem, D. Simson, A. Skowro\'nski,
  \emph{Elements of the representation theory of associative algebras. Vol. 1. Techniques of representation theory},
  London Mathematical Society Student Texts, 65. Cambridge University Press, Cambridge, 2006. x+458 pp.
  
  \bibitem[AS1]{AS}
  M. Auslander, S.O. Smal\o,
  \emph{Preprojective modules over artin algebras}, J. Algebra 66 (1980), 61--122.
  
  \bibitem[AS2]{AS2}
  M. Auslander, S.O. Smal\o,
  \emph{Almost split sequences in subcategories}, J. Algebra 69 (1981), no. 2, 426--454.
  
  \bibitem[ARS]{ARS}
  M. Auslander, I. Reiten, S.O. Smal\o,
  \emph{Representation theory of Artin algebras},
  Cambridge Studies in Advanced Mathematics, 36. Cambridge University Press, Cambridge, 1995.
  
  \bibitem[DIJ]{DIJ}
  L. Demonet, O. Iyama, G. Jasso,
  \emph{$\tau$-tilting finite algebras, bricks, and g-vectors},
  Int. Math. Res. Not. IMRN 2019, no. 3, 852--892.
  
  \bibitem[DIRRT]{DIRRT}
  L. Demonet, O. Iyama, N. Reading, I. Reiten, H. Thomas,
  \emph{Lattice theory of torsion classes}, arXiv:1711.01785.
  
  \bibitem[Eno1]{Eno}
  H. Enomoto,
  \emph{Rigid modules and ICE-closed subcategories in quiver representations},
  J. Algebra 594 (2022), 364--388.

  \bibitem[Eno2]{Eno2}
  H. Enomoto,
  \emph{From the lattice of torsion classes to the posets of wide subcategories and ICE-closed subcategories},
  arXiv:2201.00595.

  \bibitem[ES]{ES}
  H. Enomoto, A. Sakai,
  \emph{ICE-closed subcategories and wide $\tau$-tilting modules},
  Math. Z. 300 (2022), no. 1, 541--577.
  
  \bibitem[HU1]{HU}
  D. Happel, L. Unger,
  \emph{On a partial order of tilting modules},
  Algebr. Represent. Theory 8 (2005), no. 2, 147--156.

  \bibitem[HU2]{HU2}
  D. Happel, L. Unger,
  \emph{On the quiver of tilting modules},
  J. Algebra 284 (2005), no. 2, 857--868.
  
  \bibitem[Kal]{Kal}
  M. Kalck,
  \emph{A remark on Leclerc's Frobenius categories}, 
  Extended Abstracts Spring 2015. Birkh\"{a}user, Cham, 2016. 97--101.

  \bibitem[Lec]{leclerc}
  B. Leclerc, 
  \emph{Cluster structures on strata of flag varieties},
  Adv. Math. 300 (2016), 190--228.
  
  \bibitem[Moh]{Moh}
  S. K. Mohamed,
  \emph{Relative theory in subcategories}, 
  Colloq. Math. 117 (2009), no. 1, 29--63. 
  
  \bibitem[RS]{RS}
  C. Riedtmann, A. Schofield,
  \emph{On a simplicial complex associated with tilting modules},
  Comment. Math. Helv. 66 (1991), no. 1, 70--78.

  \bibitem[Tat]{tattar}
  A. Tattar,
  \emph{Torsion pairs and quasi-Abelian categories},
  Algebr. Represent. Theory 24 (2021), no. 6, 1557--1581.

\end{thebibliography}
\end{document}